\theoremstyle{plain}
\newtheorem{theorem}{Theorem}[section]
\newtheorem{proposition}[theorem]{Proposition}
\newtheorem{lemma}[theorem]{Lemma}
\newtheorem{corollary}[theorem]{Corollary}
\theoremstyle{remark}
\newtheorem{definition}[theorem]{Definition}
\newtheorem{remark}[theorem]{Remark}
\newtheorem{example}[theorem]{Example}
\newtheorem{question}[theorem]{Question}
\newcommand{\B}{\mathcal{B}}
\newcommand{\BH}{\mathcal{B(H)}}
\newcommand{\bC}{\mathbb{C}}
\newcommand{\FH}{\mathcal{F(H)}}
\newcommand{\KH}{\mathcal{K(H)}}
\newcommand{\BPH}{\mathcal{B}_p\mathcal{(H)}}
\newcommand{\cH}{{\mathcal{H}}}
\newcommand{\SH}{\mathcal{S(H)}}
\newcommand{\GH}{\mathcal{G(H)}}
\newcommand{\FC}{\mathcal{F}_C}
\newcommand{\SC}{\mathcal{S}_C}
\newcommand{\SCP}{\mathcal{S}_{C,p}}
\newcommand{\SCY}{\mathcal{S}_{C,1}}
\newcommand{\SCO}{\mathcal{S}_{C,0}}
\newcommand{\SCq}{\mathcal{S}_{C,q}}
\newcommand{\SSC}{\mathcal{O}_C}
\newcommand{\cA}{\mathcal{A}}
\newcommand{\cJ}{\mathcal{J}}
\newcommand{\cI}{\mathcal{I}}
\newcommand{\eps}{\varepsilon}
\newcommand{\ind}{\textup{ind}}
\newcommand{\cS}{\mathcal{S}}
\newcommand{\la}{\langle}
\newcommand{\onb}{\textsc{onb}}
\newcommand{\ra}{\rangle}
\newcommand{\ran}{\textup{ran}}
\newcommand{\rank}{\textup{rank}}
\newcommand{\tr}{\textup{tr}}
\begin{document}

\title[The Jordan algebra of CSOs]
{The Jordan algebra of complex symmetric operators}

\author[C. Wang]{Cun Wang}
%\address{Department of Mathematics\\Jilin University\\Changchun 130012\\P. R. China}
\address{School of Mathematics and Statistics\\Beijing Institute of Technology\\Beijing 102488\\P. R. China}
\email{wangcun18@mails.jlu.edu.cn}
\thanks{This work is part of the first author's doctoral dissertation at Jilin University.}

\author[S. Zhu]{Sen Zhu}
\address{Department of Mathematics\\Jilin University\\Changchun 130012\\P. R. China}
\email{zhusen@jlu.edu.cn}

%
%\thanks{Supported by National Natural Science Foundation of China (11671167)}

\subjclass[2010]{Primary 47B99, 46L70; Secondary 17C65, 46K70}

%17C65 Jordan structures on Banach spaces and algebras

%46K70 Nonassociative topological algebras with an involution

%46L70 Nonassociative selfadjoint operator algebras

\keywords{complex symmetric operator, Jordan operator algebra, Cartan factor, Jordan ideal, automorphism}

\begin{abstract}
For a conjugation $C$ on a separable, complex Hilbert space $\mathcal{H}$, the set $\mathcal{S}_C$ of $C$-symmetric operators on $\mathcal{H}$ forms a weakly closed, selfadjoint, Jordan operator algebra. In this paper we study $\mathcal{S}_C$ in comparison with the algebra $\mathcal{B(H)}$ of all bounded linear operators on $\mathcal{H}$, and obtain $\mathcal{S}_C$-analogues of some classical results concerning $\mathcal{B(H)}$.
We determine the Jordan ideals of $\mathcal{S}_C$ and their dual spaces. Jordan automorphisms of $\mathcal{S}_C$ are classified. We determine the spectra of Jordan multiplication operators on $\mathcal{S}_C$ and their different parts. It is proved that those Jordan invertible ones constitute a dense, path connected subset of $\mathcal{S}_C$.
\end{abstract}%approximate point spectra and the approximate defect

\maketitle
%%%%%%%%%%%%%%%%
 %\tableofcontents

%%%%%%%%%%%%%%%%%%%%%%%%%%%%%%%%%%%%%%%

%\tableofcontents

\section{Introduction}

The aim of this paper is to study a class of Jordan operator algebras consisting of complex symmetric operators. Before proceeding let us first recall a few definitions.

Throughout this paper, $\cH$ $(\cH_0,\cH_1,\cH_2,\cdots$, etc.) will always
denote a separate, complex Hilbert space with an inner product $\la\cdot,\cdot\ra$.
The Banach space of all bounded linear operators mapping  $\cH_1$ into $\cH_2$ will be denoted by $\B(\cH_1,\cH_2)$. For convenience, we shall write $\BH$ for $\B(\cH,\cH)$.

\begin{definition}
A map $C:\cH \rightarrow\cH$ is called a {\it conjugation} if
\begin{enumerate}
  \item[(i)] $C$ is antilinear, i.e. $C(\alpha x+y)=\overline{\alpha} C x+ Cy$ for $x,y\in\cH$ and $\alpha\in\bC$,
  \item[(ii)] $C$ is invertible with $C^{-1}=C$, and
  \item[(iii)] $\la Cx, Cy\ra=\la y,x\ra$ for all $x, y\in\cH$.
\end{enumerate}
\end{definition}

\begin{definition}
An operator $T\in\BH$ is said to be {\it complex symmetric} (c.s.) if
$CTC=T^*$ for some conjugation $C$ on $\cH$; in this case, $T$ is called {\it $C$-symmetric}.
\end{definition}

The term ``symmetric" stems from the fact that an operator is complex symmetric if and only if there is an orthonormal basis
$\{e_i\}$ of $\cH$ such that $\la Te_i,e_j\ra=\la Te_j,e_i\ra$ for all $i,j$, that is, $T$ can be written as a (complex) symmetric matrix relative to $\{e_i\}$.
Following Garcia and Wogen \cite{Gar09}, we let $\SH$ denote the collection of c.s. operators on $\cH$.
Many important examples of c.s. operators have been identified, such as normal operators, binormal operators, Hankel operators, truncated Toeplitz operators, certain partial isometries, weighted shifts, composition operators and many integral operators (see \cite{Gar06,Gar07,Gar09,Noor,Maofa}).

The study of abstract c.s. operators, initiated in \cite{Gar06,Gar07}, is relatively new,
although it has classical roots in the work on automorphic functions \cite{Hua},
projective geometry \cite{Jacobson}, quadratic forms \cite{Schur}, symplectic geometry \cite{Siegel}, function theory \cite{Takagi} and extension theory of differential operators \cite{Glazman}. People's current interest in c.s. operators are chiefly inspired by many interesting results of Garcia, Putinar and Wogen \cite{Gar06,Gar07,Gar09} concerning the structure of c.s. operators as well as their connections to concrete operators \cite{Clark,Gar2007,Foguel}
 and applications to mathematics physics \cite{Gar14,Putinar,Prodan}. In particular, the study of c.s. operators is closely related to that of truncated Toeplitz operators, which was initiated in Sarason's seminal paper \cite{Sarason}. In their recent paper \cite{Bercov}, Bercovici and Timotin showed that truncated Toeplitz operators can be characterized by a collection of complex symmetries.

Recently there has been a growing interest in the algebraic aspect of c.s. operators.
In \cite{GuoJiZhu}, certain connections between c.s.
operators and anti-automorphisms of singly generated $C^*$-algebras are established. Furthermore, a general answer to the norm closure problem for c.s. operators was provided in \cite{ZhuMA} which relies on an intensive analysis of singly generated $C^*$-algebras. In \cite{ShenZhu,ZhuJMAA}, c.s. generator problem for operator algebras has been studied. In a recent paper \cite{BW-Invol}, Blecher and Wang studied involutive operator algebras and obtained a characterization of operator algebras with linear involutions in terms of c.s. operators.

Since $\SH$ is not closed under addition or multiplication, we may pay attention to a typical linear subspace of $\BH$ contained in $\SH$. Given a conjugation $C$ on $\cH$, we denote by $\SC$ the set of all $C$-symmetric operators on $\cH$. Clearly, $\SC$ is a subspace of $\BH$ closed in the weak operator topology. Note that $\SH=\cup_C \SC$, where the union is taken over all conjugations on $\cH$.
Later we shall show that if $C_1,C_2$ are conjugations on $\cH$, then there exists a unitary operator $U$ on $\cH$
such that $U\cS_{C_1} U^*=\cS_{C_2}$ (see Corollary \ref{C:unitary}). So, up to unitary equivalence, $\SC$ contains all c.s. operators on $\cH$.
Throughout the following, we always assume that $C$ is a conjugation on $\cH$, unless stated otherwise. Also, we always assume that $\dim\cH=\infty$, since
almost all results in finite dimensional case follow readily from that in infinite dimensional case.

Some interesting results concerning $\SC$ as a subspace of $\BH$ have been obtained.
For example, Garcia {\cite[Theorem 1]{Gar2007}} proved that each contraction in $\SC$ is the mean of two unitary ones in $\SC$,
whose proof relies on a refined polar decomposition for c.s. operators obtained by Garcia and Putinar \cite{Gar07}.
This result is a $C$-symmetric analogue of a classical result in $\BH$.
In \cite{Danciger}, Danciger, Garcia and Putinar established for $\SC$ a natural analogue of Courant's minimax principle to estimate singular values of compact ones in $\SC$. It was shown in \cite{PtakRef} that $\SC$ is transitive and $2$-hyperreflexive.

The aim of this paper is to study $\SC$ in the Jordan setting.
This is inspired by the observation that $\SC$ is closed under the Jordan product $\circ$, defined by
$$A\circ B=\frac{1}{2}(AB+BA),\ \ \ \forall A,B\in\BH.$$
That is, $\SC$ forms a Jordan operator algebra. By a Jordan operator algebra we mean a norm-closed subspace of $\BH$ closed under the Jordan product $\circ$.
Thus $\SC$ is a Jordan subalgebra of $\BH$.

Jordan algebras arose from the search for a new algebraic setting for quantum
mechanics \cite{JordanVonNeuWig}, and turned out to have illuminating connections
with many areas of mathematics. Any associative algebra gives rise to a Jordan algebra under the Jordan product $\circ$. Note that $A\in\SC$ implies $A^*\in \SC$. Hence $\SC$ is also selfadjoint. Selfadjoint Jordan operator algebras are known as JC$^*$-algebras \cite{Wright}.

We remark that $\SC$ lies naturally in many more general contexts.
In fact, $\SC$ has been studied under the name of Hermitian type Cartan factor for many years.
There are six types of Cartan factors, namely rectangular type, Hermitian type, symplectic type, triple spin
factors and two finite-dimensional exceptional Cartan factors.
Cantor factors arose in \'{E}. Cartan's classification of finite dimensional bounded symmetric domains (see \cite{Cartan35} or \cite[Theorem 2.5.9]{ChuCH}) and play an important role in the proof of the Gelfand-Naimark theorem for JB$^*$-triples \cite{FriedmanRusso}. Any JB$^*$-triple is triple isomorphic to a closed subtriple of an $l_\infty$-sum of Cartan factors (\cite[Theorem 3.3.19]{ChuCH}).

%play an important role in the study of JB$^*$-triples. A JB$^*$-triple is a complex Banach space on which there %is a ternary algebraic structure.
%These spaces arose initially in the study of bounded symmetric domains in Banach spaces \cite{Kaup} and, as %Banach spaces with well-behaved geometric properties, their study has received much attention. Cartan factors
%The reader is referred to \cite{Bunce,PoloPeralta17JMAA,PoloPeralta2018} for recent results concerning Cartan %factors as JB$^*$-triples.

This paper focuses on the special Jordan operator algebra $\SC$, and aims to present some applications of recent advances in the theory of c.s. operators to the Jordan structure of $\SC$. We shall prove some results concerning the algebraic aspect of the space $\SC$, including Jordan ideals, Schatten $p$-classes, automorphisms, invertibles, multiplication operators. We obtain $\SC$-analogues of some classical results concerning $\mathcal{B(H)}$, showing that $\SC$ is far more similar to the associative operator algebra $\BH$ than was suspected.

In Section \ref{S:ideal}, we study the Jordan ideal structure of $\SC$.
A linear subspace $\mathcal{J}$ of $\SC$ is called a {\it Jordan ideal} if $A\circ X\in \mathcal{J}$ for every $A\in \SC$ and $X\in \mathcal{J}$.
Note that Jordan ideals of $\SC$ coincide with its triple ideals as a Jordan triple system \cite[page 38]{ChuCH}.
We shall show that Jordan ideals of $\SC$ arise by intersection from associative ideals of $\BH$ and hence are selfadjoint (see Theorem \ref{T:Jordan}).
By an associative ideal of $\BH$, we mean a two-sided ideal (not necessarily norm closed) of $\BH$ under the usual multiplication of operators. Note that Jordan ideals of $\BH$ coincide with its associative ideals (see \cite{Fong1982}). So our result shows that $\SC$ and $\BH$ have the same Jordan ideal structure. Moreover, we observe that $\SC$ is universal in the sense that any Jordan subalgebra of $\BH$ is Jordan isomorphic to a Jordan subalgebra of $\SC$ (see Corollary \ref{C:universal}). This inspires us to examine whether some classical facts about $\BH$ still hold or have analogues in $\SC$.

% Hence any Cartan factor can be embedded naturally as a closed subtriple of $\SC$ provided it is not exceptional.

In Subsection 2.2, we study those norm ideals of $\SC$ induced by Schatten $p$-classes.
The Schatten $p$-class of compact operators on $\cH$ is denoted by $\BPH$, $1\leq p<\infty$. In addition we write $\B_0(\cH)$ for the collection
of compact operators on $\cH$. Denote $\cS_{C,p}=\SC\cap\BPH$.  We establish in Subsection 2.2 the dual relations among $\SC$ and its Jordan ideals $\cS_{C,p} (p\in\{0\}\cup [1,\infty))$ (see Proposition \ref{P:SchattenP}).
As an application, we classify Jordan automorphisms of $\SC$ and show that Jordan automorphisms of $\SC$ are implemented by those unitary operators on $\cH$ commuting with $C$ (see Theorem \ref{T:JordanIso}).

In Section \ref{S:JordanMult}, we concentrate on Jordan multiplication operators on $\SC$.
For $T\in\SC$, we define $J_T: \SC\rightarrow \SC$ as $J_T(X)=T\circ X$ for $X\in\SC$. We call $J_T$ the
{\it Jordan multiplication operator} with symbol $T$. Note that $J_T$ is called {\it left multiplication by} $T$ in \cite{ChuCH}, and is called the {\it Jordan translation} of $T$ in \cite{Topping}. These operators play a basic role in the study of Jordan algebras. In particular, quadratic product, trilinear product as well as the centre can be defined in terms of them. We shall determine the spectra of Jordan multiplication operators $J_T$ and their restrictions to some Jordan ideals of $\SC$ (see Theorem \ref{T:JMultiply} and Corollary \ref{C:ideal}).
One shall see that their proofs rely on the dual relations among $\SC$ and its Jordan ideals $\cS_{C,p} (p\in[1,\infty])$.

In Section \ref{S:invertible}, we discuss Jordan invertible elements in $\SC$.
For $T\in\SC$, we define the {\it quadratic operator} $Q_T:\SC \rightarrow \SC$ by
$Q_T(X)=TXT$ for $X\in\SC$. An element $T$ of $\SC$ is called {\it Jordan invertible} if
$Q_T$ is invertible as a bounded linear operator on $\SC$ with $Q_T^{-1}=Q_{A}$ for some $A\in\SC$ (\cite[page 107]{ChuCH}).
One can check that an element $T\in\SC$ is Jordan invertible if and only if $T$ is bijective or, equivalently, $T$ is an invertible bounded operator  on $\cH$.
It is proved that those invertible ones in $\mathcal{S}_C$ constitute a dense, path connected subset of $\mathcal{S}_C$.
This is an analogue of a result of C. Apostol, L. Fialkow, D. Herrero and D. Voiculescu concerning invertible approximation in $\BH$ (see \cite[Proposition 10.1]{AFHV}).

Results obtained in this paper suggest a rich structure theory of $\SC$, provide interesting contrasts between $\SC$ and $\BH$, and also show that the structure of $\SC$ deserves further study.
Concerning our methodology, most techniques used in this paper are developed based on some recent results concerning the structure of c.s. operators. Also some of our results employ tools from noncommutative approximation of Hilbert space operators, as represented in the two-volume monographs of Apostol-Fialkow-Herrero-Voiculescu \cite{AFHV,Herr89}.

\section{Jordan ideals of $\SC$}\label{S:ideal}

This section focuses on Jordan ideals of $\SC$. It is proved that each proper Jordan ideal of $\SC$ is the intersection of $\SC$ with some associative
ideal of $\BH$ and hence consists of some compact operators on $\cH$. We shall extend some results concerning compact operators in $\BH$ to $\SC$.

%%%%%%%%%%%%%%%%%%%%%%%%%%%%%%%%%%%%%%%%%5
\subsection{Characterization of Jordan ideals}
The main result of this subsection is the following theorem, which
shows that each Jordan ideal of $\SC$ is induced by an associative ideal of $\BH$.

\begin{theorem}\label{T:Jordan}
A subset $\cJ$ of $\SC$ is a Jordan ideal of $\SC$ if and only if $\cJ=\cI\cap \SC$ for some associative ideal $\cI$ of $\BH.$
\end{theorem}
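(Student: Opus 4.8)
The plan is to prove both inclusions, with essentially all the content living in the ``only if'' direction. For ``if'', suppose $\cI$ is an associative ideal of $\BH$ and put $\cJ=\cI\cap\SC$. This is a linear manifold in $\SC$, and for $A\in\SC$, $X\in\cJ$ one has $A\circ X=\frac12(AX+XA)\in\cI$ because $\cI$ is two-sided, while $A\circ X\in\SC$ because $\SC$ is a Jordan subalgebra; hence $A\circ X\in\cJ$, so $\cJ$ is a Jordan ideal.

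For the converse, fix a Jordan ideal $\cJ$ of $\SC$. The first step is the standard linearization of the Jordan product: for all $A,B\in\SC$ and $X\in\cJ$,
$$
\tfrac12(AXB+BXA)=(A\circ X)\circ B+(X\circ B)\circ A-(A\circ B)\circ X,
$$
so the \emph{triple product} $\{A,X,B\}:=\tfrac12(AXB+BXA)$ lies in $\cJ$; in particular $AXA\in\cJ$. I then let $\cI$ be the associative two-sided ideal of $\BH$ generated by $\cJ$, so that $\cJ\subseteq\cI\cap\SC$ trivially and everything reduces to $\cI\cap\SC\subseteq\cJ$. Since $\KH$ is the unique maximal proper ideal of $\BH$, Calkin's theory gives a dichotomy: either $\cI=\BH$ (precisely when $\cJ$ contains a noncompact operator) or $\cI\subseteq\KH$.

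In the noncompact case I would show $I\in\cJ$, which forces $\cJ=\SC=\BH\cap\SC$. Starting from a noncompact $X_0\in\cJ$ and using that $\SC$ is selfadjoint, the element $P:=X_0^*\circ X_0\in\cJ$ is positive, noncompact, and commutes with $C$. Real continuous functional calculus keeps one inside $\SC$, and combining it with the triple product lets me extract an infinite-rank spectral projection $E$ of $P$ lying in $\cJ$ (for instance $E=\{E,\phi(P),E\}=E\phi(P)E$ for a suitable cut-off $\phi$ with $\phi(P)\in\cJ$ and $\phi\equiv1$ on $\ran E$). Now $F:=\ran E$ and $F^\perp$ are $C$-invariant with $\dim F=\aleph_0\ge\dim F^\perp$, so each admits a $C$-real orthonormal basis; assembling these into one $C$-real basis of $\cH$ and taking $S$ to be the involutive permutation that swaps the basis of $F^\perp$ into distinct basis vectors of $F$, one checks that $S\in\SC$ is a symmetric unitary and that $E+\{S,E,S\}=E+SES\ge I$. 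Being an invertible element of $\cJ$, it yields $I=(E+SES)^{-1}\circ(E+SES)\in\cJ$.

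The compact case is where the real difficulty sits. Here I would use two ingredients: Calkin's theorem, that membership in an associative ideal depends only on the singular-value sequence $s(T)=(s_n(T))$, so $\cI$ is described by a characteristic set which, since $\cJ$ is additive, is the solid additive hull of $\{s(X):X\in\cJ\}$; and the Takagi factorization of a compact $C$-symmetric operator, $T=\sum_n s_n(T)\,\xi_n\otimes C\xi_n$ with $\{\xi_n\}$ orthonormal. The task becomes a domination lemma: if $X\in\cJ$ and $T\in\SC$ is compact with $s(T)$ dominated by $s(X)$, then $T\in\cJ$. One proves this by transferring the Takagi data of $X$ into that of $T$ through triple products; the useful observation is that for a $C$-real selfadjoint $A$ (so $A=A^*$ and $CA=AC$, whence $A\in\SC$) one has $\{A,\eta\otimes C\eta,A\}=(A\eta)\otimes C(A\eta)$, which rescales and relocates rank-one symmetric summands without leaving $\cJ$. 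The main obstacle, and the point where the symmetric structure is indispensable, is exactly this transfer: the unitaries that implement a singular-value comparison in $\BH$ are \emph{not} available inside $\SC$, so every needed change of orthonormal system must be realized by $C$-real selfadjoint (or symmetric partial isometric) multipliers, while simultaneously controlling the approximation so as to remain inside the non-closed ideal $\cJ$. Once the domination lemma is established, an arbitrary $T\in\cI\cap\SC$ has, by Calkin's description, singular values dominated by those of a single element of $\cJ$, and the lemma gives $T\in\cJ$, completing the proof.
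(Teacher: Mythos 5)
Your ``if'' direction and the noncompact half of the ``only if'' direction are essentially sound: the linearization identity $(A\circ X)\circ B+(X\circ B)\circ A-(A\circ B)\circ X=\tfrac12(AXB+BXA)$ is correct, the element $P=X_0^*\circ X_0\in\cJ$ is indeed positive, noncompact and $C$-real, the cut-off $\phi(P)$ does lie in $\cJ$ (write $\phi(t)=t\psi(t)$ with $\psi$ continuous, so $\phi(P)=P\circ\psi(P)$), and your construction $E+SES\geq I$ with a $C$-real symmetric involution $S$ correctly forces $I\in\cJ$. The genuine gap is the compact case, which is the substantive case of the theorem since every proper Jordan ideal sits inside $\KH$. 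There you state the required ``domination lemma'' (if $X\in\cJ$ and $T\in\SC$ is compact with $s(T)$ dominated by $s(X)$, then $T\in\cJ$) but do not prove it: you yourself flag the transfer between Takagi orthonormal systems as ``the main obstacle'' and leave it unresolved. The rank-one identity $\{A,\eta\otimes C\eta,A\}=(A\eta)\otimes C(A\eta)$ only moves a symmetric rank-one summand by a single $C$-real selfadjoint $A$; to realize an arbitrary singular-value comparison you would need $C$-symmetric substitutes for the partial isometries in Calkin's argument, together with convergence control inside a linear manifold that is \emph{not} norm closed. That is a theorem-sized task (in effect, reproving the Fong--Miers--Sourour theorem internally to $\SC$), not a verification one can wave at. A secondary unproved assertion is that every $T\in\cI\cap\SC$ is dominated by a \emph{single} element of $\cJ$; this one is fixable --- for compact $X\in\cJ$ with Takagi decomposition $X=\sum_n s_n\,\xi_n\otimes C\xi_n$, the partial isometry $U=\sum_n\xi_n\otimes C\xi_n$ lies in $\SC$, so $X\circ U^*\in\cJ$ is positive with $s_n(X\circ U^*)\geq\tfrac12 s_n(X)$, and finite sums of such positive elements dominate --- but no such argument appears in your text.

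For contrast, the paper bypasses all singular-value and Takagi analysis with a matrix trick. Since all conjugations are unitarily equivalent, one may replace $(\cH,C)$ by $(\cH\oplus\cH,D)$ with $D(x_1,x_2)=(Cx_2,Cx_1)$. The key point is that $\cS_D$ consists exactly of the matrices $\left[\begin{smallmatrix} A & E\\ F & A^t\end{smallmatrix}\right]$ with $E=E^t$, $F=F^t$ but $A\in\BH$ completely arbitrary; hence the upper-left corners of a Jordan ideal $\cJ$ of $\cS_D$ form a Jordan ideal $\cJ_0$ of the \emph{full} algebra $\BH$, to which the theorem of Fong, Miers and Sourour \cite{Fong1982} applies, showing $\cJ_0$ is an associative ideal; one then verifies $\cJ=M_2(\cJ_0)\cap\cS_D$ by elementary $2\times2$ Jordan computations. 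If you want to salvage a direct, intrinsic proof along your lines, you must supply the domination lemma in full; otherwise the efficient route is this reduction to the known $\BH$ result.
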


To prove Theorem \ref{T:Jordan}, we first make some preparation.

Let $C$ be a conjugation on $\cH$. For $X\in\BH$, we denote $X^t=CX^*C$.
Define $D: \cH^{(2)} \rightarrow \cH^{(2)}$ as
\[D:(x_1,x_2)\longmapsto (Cx_2, Cx_1), \ \ \ \forall (x_1,x_2)\in \cH^{(2)}.\]
Then $D$ can be written as
$$D=\begin{bmatrix}
0&C\\
C&0
\end{bmatrix} \begin{matrix}
  \cH\\ \cH
\end{matrix} .$$ One can check that $D$ is a conjugation on $\cH^{(2)}$.

\begin{lemma}\label{L:Jordan}
Let $C$ be a conjugation on $\cH$ and $$D=\begin{bmatrix}
0&C\\
C&0
\end{bmatrix}.$$Assume that $T\in\B(\cH^{(2)})$ and
\[ T=\begin{bmatrix}
A&E\\
F&B
\end{bmatrix} \begin{matrix}
  \cH\\ \cH
\end{matrix}.\] Then
\begin{enumerate}
\item[(i)]  $T\in \mathcal{S}_D$ if and only if $B=A^t$, $E=E^t$ and $F=F^t$;
\item[(ii)] if $\cJ$ is a Jordan ideal of $\cS_D$ and $T\in \cJ$, then $\cJ$ contains the following operators on $\cH^{(2)}$
$$\begin{bmatrix}
0&0\\
F&0
\end{bmatrix},\begin{bmatrix}
0&E\\
0&0
\end{bmatrix},\begin{bmatrix}
A&0\\
0&B
\end{bmatrix},\begin{bmatrix}
B &0\\
0&A
\end{bmatrix},  \begin{bmatrix}
0&A+B\\
0&0
\end{bmatrix},\begin{bmatrix}
0&0\\
A+B&0
\end{bmatrix},\begin{bmatrix}
E&0\\
0&E
\end{bmatrix},\begin{bmatrix}
F&0\\
0&F
\end{bmatrix}.$$
\end{enumerate}
\end{lemma}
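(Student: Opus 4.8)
The plan is to handle the two parts separately, since part (i) feeds directly into part (ii).

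For part (i), I would compute $DTD$ by hand and match it against $T^*$. The one point needing care is that $D$ is conjugate-linear, so the block product must be carried out on vectors rather than formally: starting from $D(x_1,x_2)=(Cx_2,Cx_1)$ and applying $D$, $T$, $D$ in turn gives $DTD=\bigl[\begin{smallmatrix} CBC & CFC \\ CEC & CAC\end{smallmatrix}\bigr]$, whereas $T^*=\bigl[\begin{smallmatrix} A^* & F^* \\ E^* & B^*\end{smallmatrix}\bigr]$. Equating entries yields $CBC=A^*$, $CAC=B^*$, $CEC=E^*$, $CFC=F^*$. Recalling that $X^t=CX^*C$ and that $t$ is an involution (because $C^2=I$), these rewrite as $B=A^t$, $E=E^t$, $F=F^t$, with the first and second conditions coinciding, so exactly three survive. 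This is (i).

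For part (ii), the structural difficulty is that one cannot project onto a single corner inside $\cS_D$: by (i) the corner idempotent $\bigl[\begin{smallmatrix} I & 0 \\ 0 & 0\end{smallmatrix}\bigr]$ is not $D$-symmetric, since $I^t=I\neq 0$. The admissible multipliers are the off-diagonal units $S=\bigl[\begin{smallmatrix} 0 & I \\ 0 & 0\end{smallmatrix}\bigr]$ and $S'=\bigl[\begin{smallmatrix} 0 & 0 \\ I & 0\end{smallmatrix}\bigr]$, both of which lie in $\cS_D$ by (i) (using $I^t=I$). The plan is then to push $T$ through a cascade of Jordan products with $S$ and $S'$, invoking the identities (i)--(v) recorded before the lemma, so that the Jordan-ideal property $\cS_D\circ\cJ\subseteq\cJ$ together with linearity of $\cJ$ forces each of the eight listed operators into $\cJ$.

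Concretely, the cascade runs as follows. First, $(T\circ S)\circ S$ and $(T\circ S')\circ S'$ collapse the mixed blocks and deposit $\bigl[\begin{smallmatrix} 0 & F \\ 0 & 0\end{smallmatrix}\bigr]$ and $\bigl[\begin{smallmatrix} 0 & 0 \\ E & 0\end{smallmatrix}\bigr]$ in $\cJ$. Applying the identities (iv) and (iii) to these produces the balanced diagonals $\bigl[\begin{smallmatrix} F & 0 \\ 0 & F\end{smallmatrix}\bigr]$ and $\bigl[\begin{smallmatrix} E & 0 \\ 0 & E\end{smallmatrix}\bigr]$ (the last two listed operators); then identities (ii) and (v) turn these into $\bigl[\begin{smallmatrix} 0 & E \\ 0 & 0\end{smallmatrix}\bigr]$ and $\bigl[\begin{smallmatrix} 0 & 0 \\ F & 0\end{smallmatrix}\bigr]$ (the first two). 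Subtracting both off-diagonal pieces from $T$ leaves the diagonal $\bigl[\begin{smallmatrix} A & 0 \\ 0 & B\end{smallmatrix}\bigr]\in\cJ$. Multiplying this diagonal by $S$ and $S'$ (identities (ii) and (v)) gives $\bigl[\begin{smallmatrix} 0 & A+B \\ 0 & 0\end{smallmatrix}\bigr]$ and $\bigl[\begin{smallmatrix} 0 & 0 \\ A+B & 0\end{smallmatrix}\bigr]$; finally, applying (iv) to the former yields $\bigl[\begin{smallmatrix} A+B & 0 \\ 0 & A+B\end{smallmatrix}\bigr]\in\cJ$, and subtracting $\bigl[\begin{smallmatrix} A & 0 \\ 0 & B\end{smallmatrix}\bigr]$ delivers the swapped diagonal $\bigl[\begin{smallmatrix} B & 0 \\ 0 & A\end{smallmatrix}\bigr]$.

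The main obstacle I anticipate is not any individual computation but the bookkeeping of this cascade: because no single corner can be isolated, every extraction must be routed through the symmetric combinations that $\cS_D$ permits. In particular the swapped diagonal $\bigl[\begin{smallmatrix} B & 0 \\ 0 & A\end{smallmatrix}\bigr]$ is reachable only indirectly, through $\bigl[\begin{smallmatrix} A+B & 0 \\ 0 & A+B\end{smallmatrix}\bigr]$ and a subtraction. Ordering the multiplications so that each intermediate operator has already been placed in $\cJ$ before it is used is the real content; once the order is fixed, the identities (i)--(v) make each individual step mechanical.
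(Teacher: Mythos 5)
Your proposal is correct and takes essentially the same approach as the paper: part (i) is the same direct matrix computation (the paper states the condition as $DT=T^*D$, you as $DTD=T^*$, which is equivalent since $D^2=I$), and part (ii) is precisely the paper's argument of Jordan-multiplying elements of $\cJ$ by the two off-diagonal units $Y_1,Y_2\in\cS_D$ (your $S'$ and $S$) and taking linear combinations, using the identities (i)--(v) listed before the lemma. The paper compresses the whole cascade into the phrase ``direct calculations''; your ordering of those calculations is a correct way to fill in the details, including the indirect route to $\begin{bmatrix} B&0\\0&A\end{bmatrix}$ via $\begin{bmatrix} A+B&0\\0&A+B\end{bmatrix}$.
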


\begin{proof}
(i) By the definition, $T\in \mathcal{S}_D$ if and only if $DT=T^*D$.
The result follows from a direct matrix calculation.

(ii) Define $Y_1,Y_2\in\B(\cH^{(2)})$ as
\[Y_1=\begin{bmatrix}
0&0\\
I&0
\end{bmatrix},\ \  Y_2=\begin{bmatrix}
0&I\\
0&0
\end{bmatrix},\]
where $I$ is the identity operator on $\cH$.
Then, by (i), $Y_1,Y_2\in\cS_D$. % and the desired result can be seen from direct calculations.

It is easy to check that
\[ [Y_1\circ(Y_1\circ T)]\circ Y_2 =\frac{1}{4}\begin{bmatrix}
E&0\\
0&E
\end{bmatrix}, \ \ \left\{[Y_1\circ(Y_1\circ T)]\circ Y_2\right\}\circ Y_2=\frac{1}{4}\begin{bmatrix}
0&E\\
0&0
\end{bmatrix}\]
and
\[Y_1\circ [(T\circ Y_2)\circ Y_2] =\frac{1}{4}\begin{bmatrix}
F&0\\
0&F
\end{bmatrix}, \ \ Y_1\circ\left\{ Y_1\circ [(T\circ Y_2)\circ Y_2]\right\}=\frac{1}{4}\begin{bmatrix}
0&0\\
F&0
\end{bmatrix}.\] Thus
\[ \begin{bmatrix}
0&E\\
0&0
\end{bmatrix}, \begin{bmatrix}
0&0\\
F&0
\end{bmatrix}, \begin{bmatrix}
E&0\\
0&E
\end{bmatrix},\begin{bmatrix}
F&0\\
0&F
\end{bmatrix} \in \cJ. \]
It follows immediately  that $A\oplus B\in\cJ$,
\[Y_1\circ \begin{bmatrix}
A&0\\
0&B
\end{bmatrix}=\frac{1}{2}\begin{bmatrix}
0&0\\
A+B&0
\end{bmatrix}\in\cJ, \ \ \ \begin{bmatrix}
A&0\\
0&B
\end{bmatrix}\circ Y_2 =\frac{1}{2}\begin{bmatrix}
0& A+B\\
0&0
\end{bmatrix}\in\cJ,\]
\[ Y_1\circ\left(\begin{bmatrix}
A&0\\
0&B
\end{bmatrix}\circ Y_2\right) =\frac{1}{4}\begin{bmatrix}
  A+B&0\\
0&A+B
\end{bmatrix}\in\cJ.\] Therefore we conclude that $B\oplus A\in\cJ$.
\end{proof}

%\[ (((T\circ Y_2 )\circ Y_2)\circ Y_1)\circ Y_1=\alpha \begin{bmatrix}
%0&0\\
%F&0
%\end{bmatrix}\]

\begin{lemma}\label{L:unitary}
If $C_1,C_2$ are two conjugations on $\cH$, then there exists unitary $U\in\BH$ such that $U^*C_1U=C_2$.% \cS_{C_2}=U^*\cS_{C_1}U$.
\end{lemma}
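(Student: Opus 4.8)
The plan is to exploit the fact, already recalled near the start of the paper, that every conjugation on $\cH$ admits an orthonormal basis of fixed vectors, and then to transport one such basis onto the other by a unitary operator. Concretely, for $i=1,2$ I would choose an \onb\ $\{e_n^{(i)}\}_{n\geq 1}$ of $\cH$ with $C_i e_n^{(i)}=e_n^{(i)}$ for all $n$; the existence of these ``real'' bases is exactly the remark made in the introduction. Since $\cH$ is separable and infinite dimensional, both bases are indexed by $\bN$, so the assignment $e_n^{(2)}\mapsto e_n^{(1)}$ extends to a unique bounded linear operator $U$ on $\cH$ that carries one orthonormal basis bijectively onto the other; such a $U$ is automatically unitary, with $U^{-1}=U^*$ determined by $U^* e_n^{(1)}=e_n^{(2)}$.

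The next step is to verify the intertwining relation $U^*C_1U=C_2$. The key observation is that $U^*C_1U$ is conjugate-linear, being a conjugate-linear map $C_1$ composed between the two linear maps $U$ and $U^*$; the same is true of $C_2$. Hence it suffices to check that the two conjugate-linear maps agree on the \onb\ $\{e_n^{(2)}\}$. Evaluating gives
\[
U^*C_1U e_n^{(2)}=U^*C_1 e_n^{(1)}=U^* e_n^{(1)}=e_n^{(2)}=C_2 e_n^{(2)},
\]
where I have used $U e_n^{(2)}=e_n^{(1)}$, the fixed-vector property $C_1 e_n^{(1)}=e_n^{(1)}$, the identity $U^* e_n^{(1)}=e_n^{(2)}$, and finally $C_2 e_n^{(2)}=e_n^{(2)}$.

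There is no serious obstacle in this argument; the only point deserving care is the passage from agreement on a basis to equality of the two operators, which for conjugate-linear maps rests on conjugate-linearity together with continuity. Indeed, both $U^*C_1U$ and $C_2$ are bounded (each $C_i$ is isometric and $U$ is unitary), and for $x=\sum_n c_n e_n^{(2)}$ each map sends $x$ to $\sum_n \overline{c_n}\,e_n^{(2)}$; thus they coincide on the dense linear span of $\{e_n^{(2)}\}$ and hence on all of $\cH$. This yields $U^*C_1U=C_2$, completing the proof.
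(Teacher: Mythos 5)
Your proposal is correct and follows exactly the paper's argument: choose orthonormal bases of fixed vectors for $C_1$ and $C_2$ (citing the standard fact that every conjugation admits such a basis), define the unitary $U$ carrying one basis to the other, and verify $U^*C_1U=C_2$ on basis vectors. The paper leaves the final verification as ``easy to check''; your careful completion via conjugate-linearity and density is precisely the intended detail.
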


\begin{proof}
Since $C_1,C_2$ are conjugations on $\cH$, by \cite[Lemma 2.11]{Gar14}, there exists two orthonormal bases $\{e_n: n\geq 1\}$ and $\{f_n: n\geq 1\}$ such that $C_1e_n=e_n$ and $C_2f_n=f_n$ for all $n$. Define a unitary operator $U$ on $\cH$ as $Uf_n=e_n$, $n\geq 1$.
Then it is easy to check that $U^*C_1U=C_2$.
\end{proof}

\begin{corollary}\label{C:unitary}
If $C_1,C_2$ are two conjugations on $\cH$, then there exists unitary $U\in\BH$ such that $\mathcal{S}_{C_2}=U^* \mathcal{S}_{C_1}U$.
\end{corollary}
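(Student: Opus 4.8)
The plan is to leverage Lemma \ref{L:unitary} directly, since the corollary is essentially a translation of the unitary-conjugacy of conjugations into a statement about the associated $C$-symmetric operator spaces. Given two conjugations $C_1, C_2$ on $\cH$, Lemma \ref{L:unitary} supplies a unitary $U\in\BH$ with $U^*C_1U = C_2$. I claim this same $U$ implements the desired identity $\mathcal{S}_{C_2} = U^*\mathcal{S}_{C_1}U$.

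First I would verify the set equality by a membership argument. An operator $S$ lies in $\mathcal{S}_{C_2}$ precisely when $C_2 S C_2 = S^*$. Using $C_2 = U^*C_1 U$, I would substitute and rewrite: for an arbitrary $T\in\mathcal{S}_{C_1}$, set $S = U^*TU$ and compute $C_2 S C_2 = (U^*C_1U)(U^*TU)(U^*C_1U)$. Since $U$ is unitary, the inner $UU^*$ factors collapse to the identity, giving $U^*C_1 T C_1 U = U^* T^* U = (U^*TU)^* = S^*$, where the middle step uses $T\in\mathcal{S}_{C_1}$ and the last uses unitarity of $U$. This shows $U^*\mathcal{S}_{C_1}U \subseteq \mathcal{S}_{C_2}$. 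The reverse inclusion follows by the same computation applied to $UC_2U^* = C_1$ (equivalently, by symmetry of the roles of $C_1$ and $C_2$, or by noting that conjugation by $U$ is a bijection on $\BH$).

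There is no real obstacle here; the only point requiring a modicum of care is confirming that $C_2 = U^*C_1 U$ is genuinely a conjugation-conjugation (i.e., that conjugating a conjugation by a unitary yields a conjugation), but this is already implicit in Lemma \ref{L:unitary} and is routine: conjugate-linearity, involutivity, and the isometry-of-inner-product condition are all preserved under unitary conjugation. I would therefore state the proof crisply: invoke Lemma \ref{L:unitary} to obtain $U$, then record the one-line matrix/operator identity $C_2(U^*TU)C_2 = U^*(C_1 T C_1)U = U^*T^*U = (U^*TU)^*$ valid for $T\in\mathcal{S}_{C_1}$, and conclude that $U^*\mathcal{S}_{C_1}U = \mathcal{S}_{C_2}$ by symmetry.
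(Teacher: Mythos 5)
Your proposal is correct and is essentially the paper's own (implicit) argument: the paper states this corollary without proof as an immediate consequence of Lemma \ref{L:unitary}, and your verification that $C_2(U^*TU)C_2 = U^*(C_1TC_1)U = (U^*TU)^*$, together with the symmetric argument for the reverse inclusion, is exactly the routine computation the authors leave to the reader.
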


The preceding result shows that $\SH$ is the union of all Jordan operator algebras unitarily equivalent to $\SC$.

\begin{corollary}\label{C:unitary-cntd}
The set of all conjugations on $\cH$ is path connected (as a subset of the Banach space of bounded antilinear operators on $\cH$).
\end{corollary}

\begin{proof}
Fix two conjugations $C_1$ and $C_2$ on $\cH$. By Lemma \ref{L:unitary}, we can find unitary $U\in\BH$ such that  $U^*C_1U=C_2$.
It is well known that the set $\mathcal{U(H)}$ of all unitary operators on $\cH$ is path connected. Then there exists continuous $V:[0,1]\rightarrow\mathcal{U(H)}$ such that
$V(0)=I$ and $V(1)=U$. Thus $\{V(\lambda)^*C_1V(\lambda); \lambda\in[0,1]\}$ is a path of conjugations from $C_1$  to $C_2$.
\end{proof}

\begin{proof}[Proof of Theorem \ref{T:Jordan}]
The sufficiency is obvious.

``$\Longrightarrow$". Set
$$D=\begin{bmatrix}
0&C\\
C&0
\end{bmatrix}.$$ Since $\dim\cH=\dim\cH^{(2)}$, in view of Corollary \ref{C:unitary}, there exists unitary $U:\cH\rightarrow\cH^{(2)}$ such that $U\SC U^*=\mathcal{S}_D$. Thus it suffices to prove the conclusion for $\mathcal{S}_D$.

Assume that $\cJ$ is a Jordan ideal of $\cS_D$.
Set \[\cJ_0= \left\{A\in\BH: \exists A_1,A_2,A_3\in \BH ~\textup{with} \begin{bmatrix}
A&A_1\\
A_2&A_3
\end{bmatrix}\in \cJ \right\}.\]

{\it Claim 1}. $\cJ_0$ is a Jordan ideal of $\BH$.

It is obvious that $\cJ_0$ is a linear subspace of $\BH$.
Now assume that $A\in \cJ_0$ and $B\in\BH$. So, by Lemma \ref{L:Jordan} (i), there exists an element $T\in \cJ$ with the form
\[T=\begin{bmatrix}
A&E\\
F&A^t
\end{bmatrix},\] where $E=E^t$ and $F=F^t$. By Lemma \ref{L:Jordan} (ii), we have
\[\begin{bmatrix}
A&0\\
0&A^t
\end{bmatrix}\in \cJ. \eqno(1)
\]  Note that
\[ \begin{bmatrix}
B&0\\
0&B^t
\end{bmatrix}\in \cS_D. \] We have
\[\begin{bmatrix}
A\circ B&0\\
0&A^t\circ B^t
\end{bmatrix}\in \cJ.\] Thus $A\circ B\in \cJ_0$. This proves Claim 1.

Moreover, in view of (1), it follows from Lemma \ref{L:Jordan} (ii) that $A^t\in \cJ_0$. Thus
$\cJ_0$ is invariant under the map $X\mapsto X^t$.

By \cite[Theorem 3]{Fong1982}, $\cJ_0$ is an  associative ideal of $\BH$. It follows immediately that $M_2(\cJ_0)$ is an  associative ideal of $\B(\cH^{(2)})$, where
\[M_2(\cJ_0) =  \left\{  \begin{bmatrix}
X&Y\\
Z&W
\end{bmatrix} : X,Y,Z,W\in \cJ_0 \right\}.\] Thus it remains to check that $\cJ=M_2(\cJ_0)\cap \cS_D$.

{\it Claim 2}. $M_2(\cJ_0)\cap \cS_D =\mathcal{E}$, where \[
  \mathcal{E}=\left\{  \begin{bmatrix}
A&E+E^t\\
F+F^t&A^t
\end{bmatrix} : A,E,F\in \cJ_0 \right\}.\]

Note that an operator $X$ lies in $\cJ_0$ if and only if $X^t\in\cJ_0$. Then, by Lemma \ref{L:Jordan} (i), the inclusion $\mathcal{E}\subset M_2(\cJ_0)\cap \cS_D$ is obvious.
Conversely, if $A,B,E,F\in \cJ_0$ and
$$\begin{bmatrix}
A&E\\
F&B
\end{bmatrix} \in \cS_D.$$ Then by Lemma \ref{L:Jordan} (i), $B=A^t$, $E=E^t$ and $F=F^t$.  It follows that $\mathcal{E}= M_2(\cJ_0)\cap \cS_D$.

Now it remains to check that $\cJ=\mathcal{E}$.

``$\cJ\subset \mathcal{E}$".
Choose an element $T\in \cJ$ and assume that
\[T=\begin{bmatrix}
A&E\\
F&A^t
\end{bmatrix},\] where $E=E^t$ and $F=F^t$. Thus $A\in \cJ_0$ and, by Lemma  \ref{L:Jordan} (ii), we have
\[\begin{bmatrix}
E&0\\
0&E
\end{bmatrix}\in \cJ, \ \ \ \begin{bmatrix}
F&0\\
0&F
\end{bmatrix} \in \cJ.\] This implies that $E,F\in \cJ_0$. So
\[ T=\begin{bmatrix}
A&E\\
F&A^t
\end{bmatrix}= \begin{bmatrix}
A&E/2+E^t/2\\
F/2+F^t/2&A^t
\end{bmatrix} \in \mathcal{E}.\]

``$\cJ\supset\mathcal{E}$".  Choose $A,E,F\in \cJ_0$. We shall prove that
\[T=\begin{bmatrix}
A&E+E^t\\
F+F^t&A^t
\end{bmatrix}\in \cJ.\]

If $X\in \cJ_0$, then one can see from the proof of Claim 1 that $X\oplus X^t\in \cJ$.
So we have
\[\begin{bmatrix}
A&0\\
0&A^t
\end{bmatrix}, \begin{bmatrix}
E&0\\
0&E^t
\end{bmatrix},  \begin{bmatrix}
F&0\\
0&F^t
\end{bmatrix}\in \cJ. \]
By Lemma  \ref{L:Jordan} (ii), we have $T\in \cJ$. This ends the proof.
\end{proof}

\begin{corollary}\label{C:universal}
 Let $C$ be a conjugation on $\cH$. Then $\BH$ is Jordan isomorphic to a Jordan subalgebra of $\SC$.
\end{corollary}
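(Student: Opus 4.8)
The plan is to produce an explicit Jordan embedding of $\BH$ into the model algebra $\mathcal{S}_D$ attached to the conjugation $D=\begin{bmatrix}0&C\\ C&0\end{bmatrix}$ on $\cH^{(2)}$, and then transport it back to $\SC$ through the unitary equivalence $\SC\cong\mathcal{S}_D$ furnished by Corollary \ref{C:unitary} (exactly as in the proof of Theorem \ref{T:Jordan}, using $\dim\cH=\dim\cH^{(2)}$). The natural candidate is the doubling map
\[
\Phi:\BH\to\mathcal{S}_D,\qquad \Phi(A)=\begin{bmatrix}A&0\\ 0&A^t\end{bmatrix},
\]
where $A^t=CA^*C$. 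By Lemma \ref{L:Jordan}(i) the operator $\Phi(A)$ does lie in $\mathcal{S}_D$, since its off-diagonal entries vanish ($0=0^t$) and its lower-right corner is $A^t$.

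First I would record the elementary behaviour of the transpose $A\mapsto A^t$ on $\BH$: it is \emph{linear} (being the composite of the two conjugate-linear maps $A\mapsto A^*$ and $X\mapsto CXC$), it is \emph{anti-multiplicative}, $(AB)^t=B^tA^t$ (using $C=C^{-1}$), and it is \emph{isometric}, $\|A^t\|=\|A\|$ (as $C$ is antiunitary). Linearity of $t$ at once gives linearity and isometry of $\Phi$, while injectivity is immediate from the diagonal form; thus $\Phi$ is an isometric linear injection with norm-closed range.

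The crux is to check that $\Phi$ preserves the Jordan product. Computing block-diagonally,
\[
\Phi(A)\circ\Phi(B)=\begin{bmatrix}A\circ B&0\\ 0&A^t\circ B^t\end{bmatrix},
\]
so it suffices to establish the identity $A^t\circ B^t=(A\circ B)^t$. Here the anti-multiplicativity of $t$ interacts with the symmetry of $\circ$: expanding, $(A\circ B)^t=\tfrac12\bigl((AB)^t+(BA)^t\bigr)=\tfrac12\bigl(B^tA^t+A^tB^t\bigr)=A^t\circ B^t$, the reversal of factors being absorbed by the commutative form of the Jordan product. Hence $\Phi(A)\circ\Phi(B)=\Phi(A\circ B)$, so $\Phi$ is a Jordan homomorphism, and therefore a Jordan isomorphism onto its range $\{A\oplus A^t:A\in\BH\}$, a Jordan subalgebra of $\mathcal{S}_D$.

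Finally I would pull everything back: Corollary \ref{C:unitary} provides a unitary $U:\cH\to\cH^{(2)}$ with $U\SC U^*=\mathcal{S}_D$, and conjugation by $U$ is a Jordan isomorphism preserving $\circ$, so $A\mapsto U^*\Phi(A)U$ realises $\BH$ as a Jordan subalgebra of $\SC$. I do not anticipate a genuine obstacle here; the only point meriting a moment's care is that $t$ is \emph{anti}-multiplicative, so that it is the evenness of the Jordan product—rather than any multiplicativity of $t$—that makes $\Phi$ a Jordan (and not an associative) homomorphism.
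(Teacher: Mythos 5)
Your proposal is correct and follows essentially the same route as the paper: the paper also passes to $\mathcal{S}_D$ with $D=\begin{bmatrix}0&C\\ C&0\end{bmatrix}$ via Corollary \ref{C:unitary} and uses the same doubling map $X\mapsto X\oplus X^t$, leaving the verification that it is an isometric Jordan homomorphism as "easy to see." Your write-up merely fills in those routine details (linearity and anti-multiplicativity of $t$, and the cancellation of the factor reversal by the symmetry of $\circ$), which is exactly the intended argument.
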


\begin{proof}
Set
$$D=\begin{bmatrix}
0&C\\
C&0
\end{bmatrix}.$$ Thus $D$ is a conjugation on $\cH^{(2)}$. By Corollary \ref{C:unitary}, $\SC$ and $\cS_D$ are unitarily equivalent.
Hence it suffices to prove that $\BH$ is Jordan isomorphic to a Jordan subalgebra of $\cS_D$.
Define
\begin{align*}
\phi: \BH&\longrightarrow \cS_D,\\
X& \longmapsto X\oplus X^t.\end{align*}
It is easy to see that $\phi$ is a linear isometry satisfying $\phi(X^*)=\phi(X)^*$ and
$\phi(X\circ Y)=\phi(X)\circ \phi(Y)$ for $X,Y\in\BH$. Thus $\phi$ induces a Jordan isomorphism between $\BH$ and $\phi(\BH)$.
\end{proof}

We denote by $\FH$ the set of all finite-rank operators on $\cH$, and by $\KH$ the set of all compact operators on $\cH$.

\begin{corollary}\label{C:minIdeal}
If $\mathcal{J}$ is a nontrivial Jordan ideal of $\SC$, then
$$[\SC\cap\FH] \subset \mathcal{J}\subset [\SC\cap\KH].$$
\end{corollary}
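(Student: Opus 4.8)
The plan is to read this off directly from the characterization of Jordan ideals in Theorem \ref{T:Jordan}, combined with the classical Calkin structure theory of the associative ideals of $\BH$. First I would apply Theorem \ref{T:Jordan} to write $\mathcal{J}=\cI\cap\SC$ for some associative ideal $\cI$ of $\BH$. The entire problem then reduces to pinning down $\cI$, and the only point requiring attention is the (elementary) translation of the hypothesis that $\mathcal{J}$ is nontrivial into the statement that $\cI$ is a genuinely proper, nonzero ideal.

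Next I would rule out the two degenerate possibilities for $\cI$. If $\cI=\BH$, then $\cI\cap\SC=\SC$, contradicting $\mathcal{J}\neq\SC$, so $\cI$ is proper; if $\cI=\{0\}$, then $\cI\cap\SC=\{0\}$, contradicting $\mathcal{J}\neq\{0\}$, so $\cI$ is nonzero. At this stage I would invoke the classical description of the two-sided ideals of $\BH$: the finite-rank operators $\FH$ constitute the smallest nonzero associative ideal, while $\KH$ is the largest proper associative ideal. Consequently every nonzero proper associative ideal $\cI$ is squeezed as $\FH\subseteq\cI\subseteq\KH$.

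Finally I would intersect this chain with $\SC$. From $\FH\subseteq\cI$ I obtain $\SC\cap\FH\subseteq\cI\cap\SC=\mathcal{J}$, and from $\cI\subseteq\KH$ I obtain $\mathcal{J}=\cI\cap\SC\subseteq\SC\cap\KH$; together these give the asserted inclusions. I do not anticipate a serious obstacle: all the real work is already done by Theorem \ref{T:Jordan}, and once $\cI$ is known to be nonzero and proper the conclusion is immediate from Calkin's theorem. (In passing I note that the bracket notation must be read as mere grouping rather than norm closure, since the ideal $\cI=\FH$ yields $\mathcal{J}=\SC\cap\FH$, which is not norm closed, so the lower inclusion would fail if closures were intended.) The single delicate step is the observation isolated above converting nontriviality of $\mathcal{J}$ into nontriviality of $\cI$, and even that is routine.
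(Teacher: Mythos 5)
Your proof is correct and follows essentially the same route as the paper: invoke Theorem \ref{T:Jordan} to write $\mathcal{J}=\cI\cap\SC$, use the classical fact that every nontrivial associative ideal $\cI$ of $\BH$ satisfies $\FH\subset\cI\subset\KH$, and intersect with $\SC$. Your explicit check that nontriviality of $\mathcal{J}$ forces $\cI$ to be nonzero and proper is a detail the paper leaves implicit, but it is the same argument.
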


\begin{proof}
Note that each nontrivial associative ideal $\mathcal{I}$ of $\BH$ satisfies $\FH\subset\mathcal{I} \subset \KH$. The the desired result follows readily from Theorem \ref{T:Jordan}.
\end{proof}

\begin{corollary}
$\SC\cap\KH$ is the unique nontrivial norm-closed Jordan ideal of $\SC$.
\end{corollary}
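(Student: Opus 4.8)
The plan is to prove two things: that $\SC\cap\KH$ really is a nontrivial norm-closed Jordan ideal, and that it is the only one. Both halves rest almost entirely on Theorem \ref{T:Jordan} and Corollary \ref{C:minIdeal}, so the only genuine work is a short approximation argument showing that $\SC\cap\FH$ is norm-dense in $\SC\cap\KH$.

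First I would settle existence. Applying Theorem \ref{T:Jordan} with $\cI=\KH$ (which is an associative ideal of $\BH$) shows at once that $\SC\cap\KH$ is a Jordan ideal of $\SC$. It is norm-closed because $\SC$ is weakly closed, hence norm-closed, and $\KH$ is norm-closed, so their intersection is norm-closed. It is nontrivial for two reasons: the rank-one operator $\la\,\cdot\,,e_1\ra e_1$, formed from the distinguished \onb\ $\{e_n\}$ with $Ce_n=e_n$, is a nonzero compact $C$-symmetric operator, while $I\in\SC\setminus\KH$; thus $\{0\}\neq\SC\cap\KH\neq\SC$.

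For uniqueness, I would take an arbitrary nontrivial norm-closed Jordan ideal $\cJ$ of $\SC$. Corollary \ref{C:minIdeal} already gives $\cJ\subset\SC\cap\KH$, so only the reverse inclusion needs proof. Fix $K\in\SC\cap\KH$ and let $P_n$ be the orthogonal projection onto $\textup{span}\{e_1,\dots,e_n\}$. Since $Ce_k=e_k$ one checks $CP_nC=P_n$, whence $C(P_nKP_n)C=(CP_nC)(CKC)(CP_nC)=P_nK^*P_n=(P_nKP_n)^*$, so each compression $P_nKP_n$ is a finite-rank element of $\SC$. By Corollary \ref{C:minIdeal}, $P_nKP_n\in\SC\cap\FH\subset\cJ$. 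As $K$ is compact, $P_nKP_n\to K$ in norm, and the norm-closedness of $\cJ$ forces $K\in\cJ$. Hence $\SC\cap\KH\subset\cJ$, and combined with the opposite inclusion this yields $\cJ=\SC\cap\KH$.

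The argument is short precisely because the structural input is supplied by the earlier results, so I expect the only real (though mild) obstacle to be the density step, namely showing that every compact $C$-symmetric operator is a norm limit of finite-rank $C$-symmetric operators. The crucial point is to approximate using the compressions $P_nKP_n$ by the projections coming from the $C$-real basis, rather than arbitrary finite-rank truncations: this is exactly what preserves membership in $\SC$ while compactness of $K$ still guarantees norm convergence. Everything else reduces to the verifications $CP_nC=P_n$ and $C(P_nKP_n)C=(P_nKP_n)^*$, which are routine.
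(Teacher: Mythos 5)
Your proof is correct, and its skeleton is exactly the one the paper intends (the paper states this corollary without proof, as an immediate consequence of Theorem \ref{T:Jordan}, Corollary \ref{C:minIdeal}, and norm-density of $\SC\cap\FH$ in $\SC\cap\KH$): existence via Theorem \ref{T:Jordan} with $\cI=\KH$, the sandwich $\SC\cap\FH\subset\cJ\subset\SC\cap\KH$ from Corollary \ref{C:minIdeal}, and then density plus norm-closedness to force equality. The only place you diverge is the density step itself: you compress by the $C$-invariant projections $P_n$ coming from a $C$-real \onb, using $CP_nC=P_n$ and compactness of $K$ to get $P_nKP_n\to K$ in norm, whereas the paper (in the Proposition opening its Schatten-class subsection, applied with $p=0$) takes arbitrary finite-rank approximants $F_n\to K$ and symmetrizes them via $F_n\mapsto\frac{1}{2}(F_n+CF_n^*C)$, which stays in $\SC\cap\FH$ and still converges to $\frac{1}{2}(K+CK^*C)=K$. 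Both arguments are routine; yours is self-contained and does not rely on a result stated later in the paper, while the paper's symmetrization trick has the advantage of working verbatim for all the norms $\|\cdot\|_p$, $p\in\{0\}\cup[1,\infty)$, simultaneously, which is what the later subsection needs.
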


\begin{remark}
The result of Theorem \ref{T:Jordan} still holds in the case that $\dim\cH<\infty$.
In fact, if $\dim\cH<\infty$, then one can  prove that neither $\BH$ nor $\SC$ has a nontrivial Jordan ideal. This shows that $\SC$ and $\BH$ has the same Jordan ideal structure for separable complex Hilbert space $\cH$.
\end{remark}

%%%%%%%%%%%%%%%%%%%%%
\subsection{Schatten $p$-classes}

%
%Recall that if $K\in\KH$ is compact, then $|K|=\sum\lambda_n e_n\otimes e_n$ with respect to some \onb~ $\{e_n\}$ of $\cH$, where $\lambda_1\geq\lambda_2\cdots\lambda_n\geq\lambda_{n+1}\geq \cdots\geq 0$ and $\lambda_n\rightarrow\infty$.

The Schatten $p$-class of compact operators on $\cH$ is denoted by $\BPH$, $1\leq p<\infty$. It is well known that $\BPH$ is a Banach space under $p$-norm $\|\cdot\|_p$ and $\FH$ is dense in $\BPH$, where $\FH$ denotes the collection of finite-rank operators in $\BH$. Moreover, $\BPH$ is the  dual of $\B_q(\cH)$ for $1< p, q<\infty$ with $\frac{1}{p}+\frac{1}{q}=1$. The reader is refereed to \cite{Ringrose} or \cite{Schatten} for more details.
The aim of this subsection is to prove analogues of these facts in $\SC$.

For convenience, we denote $\B_0(\cH)=\KH$.
For $p\in\{0\}\cup [1,\infty)$, we denote $\SCP=\SC\cap\BPH$.
Still $\SCP$ is a Banach space under the norm $\|\cdot\|_p$. Note that $\|\cdot\|_0=\|\cdot\|$.

\begin{proposition}
Let $p\in\{0\}\cup [1,\infty)$. Then, given $T\in\SCP$ and $\eps>0$, there exists $F\in \SC\cap\FH$ such that $\|T-F\|_p<\eps$.
\end{proposition}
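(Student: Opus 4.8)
The plan is to approximate $T$ by its compressions to an increasing sequence of finite-dimensional subspaces that are compatible with the conjugation $C$. Since $C$ is a conjugation, there is an \onb\ $\{e_n\}$ of $\cH$ with $Ce_n=e_n$ for all $n$. Let $P_n$ denote the orthogonal projection onto $\textup{span}\{e_1,\dots,e_n\}$. Because $C$ fixes each $e_k$, the computation $\la Cx,e_k\ra=\la Cx,Ce_k\ra=\la e_k,x\ra=\overline{\la x,e_k\ra}$ shows that $C$ commutes with $P_n$, that is $CP_n=P_nC$ (equivalently $CP_nC=P_n$). I would then set $F_n=P_nTP_n$; clearly each $F_n$ is finite rank.

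The first key step is to check that $F_n\in\SC$. Using that $C$ is conjugate-linear with $C^2=I$, one has $C(AB)C=(CAC)(CBC)$ for all $A,B\in\BH$, so
\[
CF_nC=(CP_nC)(CTC)(CP_nC)=P_n T^* P_n=(P_nTP_n)^*=F_n^*,
\]
where I used $CP_nC=P_n$ and $CTC=T^*$ (as $T\in\SC$). Hence $F_n\in\SC\cap\FH$, and it remains only to prove that $\|T-F_n\|_p\to 0$.

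The second key step is the convergence of the truncations in the $p$-norm, which is the standard fact that compression to $P_n$ converges for operators in $\BPH$ (and in $\KH$ when $p=0$). I would estimate
\[
\|T-P_nTP_n\|_p\le\|(I-P_n)T\|_p+\|P_nT(I-P_n)\|_p\le\|(I-P_n)T\|_p+\|T(I-P_n)\|_p,
\]
and then show that each term tends to $0$. Since $I-P_n\to0$ in the strong operator topology and the operators $I-P_n$ are contractions, one reduces to the rank-one case via density of $\FH$ in $\BPH$ (resp.\ in $\KH$): for a rank-one operator $x\mapsto\la x,f\ra g$ the product $(I-P_n)(\la\cdot,f\ra g)$ has $p$-norm $\|f\|\,\|(I-P_n)g\|\to0$, and a standard $\eps/2$ approximation argument passes to general $T\in\BPH$. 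The second term is handled identically after replacing $T$ by $T^*\in\BPH$ and using $\|S\|_p=\|S^*\|_p$.

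Finally, given $\eps>0$ I choose $n$ large enough that $\|T-F_n\|_p<\eps$ and set $F=F_n$. The only point requiring care---and the reason the compression must be taken with respect to a $C$-fixed basis---is the first step: an arbitrary finite-rank truncation need not be $C$-symmetric, and it is precisely the commutation $CP_n=P_nC$ that keeps $F_n$ inside $\SC$. The norm convergence itself is routine once the standard Schatten-class approximation lemma is invoked.
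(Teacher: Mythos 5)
Your proof is correct, but it takes a genuinely different route from the paper's. The paper's argument is a symmetrization (averaging) trick: start from any finite-rank operators $F_n$ with $\|T-F_n\|_p\to 0$ (density of $\FH$ in $\BPH$, resp.\ in $\KH$ for $p=0$), and replace each $F_n$ by $\tfrac{1}{2}(F_n+CF_n^*C)$, which is finite rank and lies in $\SC$; since $T=\tfrac{1}{2}(T+CT^*C)$ and $\|CXC\|_p=\|X^*\|_p=\|X\|_p$, the triangle inequality gives $\|T-\tfrac{1}{2}(F_n+CF_n^*C)\|_p\le\|T-F_n\|_p$. In effect, the map $X\mapsto\tfrac{1}{2}(X+CX^*C)$ is a $\|\cdot\|_p$-contraction fixing $T$ and carrying $\FH$ into $\SC\cap\FH$, so unrestricted finite-rank approximation transfers to $\SC$ at no cost. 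Your route instead constructs a canonical approximating sequence, the truncations $P_nTP_n$ along a $C$-fixed basis, at the price of having to verify $\|T-P_nTP_n\|_p\to 0$; that verification is routine but still leans on the same density of $\FH$ in $\BPH$ together with the ideal property of the Schatten norms, so nothing is saved logically. What your approach buys is the structural fact that truncation with respect to a $C$-real basis preserves $\SC$ (in the matrix picture: a truncated symmetric matrix is symmetric), and it is essentially the compression technique the paper itself deploys later to prove the trace-orthogonality of $\SC$ and $\SSC$; what the paper's approach buys is brevity and a symmetrization device that is reused elsewhere, for instance in the duality results for the Schatten classes of $\SC$ and in showing that $\SC+\KH$ is norm closed.
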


\begin{proof}
Choose an operator  $T\in\SCP$. Then $T\in\BPH$ and there exist finite-rank operators $\{F_n\}\subset\BH$ such that $\|T-F_n\|_p\rightarrow0$.
For each $n\geq 1$, note that $\frac{1}{2}(F_n+CF_n^*C)$ is of finite rank lying in $\SC$ and
\begin{align*}
\|T-\frac{1}{2}(F_n+CF_n^*C)\|_p&=\|\frac{1}{2}(T+CT^*C)-\frac{1}{2}(F_n+CF_n^*C)\|_p\\
&\leq \|\frac{1}{2}(T-F_n)\|_p+\|\frac{1}{2}C(T-F_n)^*C\|_p \\
&=\|T-F_n\|_p.
\end{align*}
Thus $T$ can be approximated in $p$-norm $\|\cdot\|_p$ by finite-rank operators in $\SC$.
\end{proof}

%
%Likewise, one can prove the following.
%
%
%\begin{corollary}
%Let $p\in\bR$ with $1\leq p<\infty$. Then, given $T\in\SC\cap\BPH$ and $\eps>0$, there exists $F\in \SC\cap\FH$ such that $\|T-F\|_p<\eps$.
%\end{corollary}

%
%%%%%%%%%%%%%%%%%%%%%%%%%%%%%%%%%%%%%%%%%%%%
%
%
%\section{As a Jordan subalgebra of $\BH$}

%
%As a Jordan subalgebra of $\BH$, $\SC$ is not hereditary. Indeed, the identity operator $I$ lies in $\BH$ and there are many positive contractions in $\BH\setminus\SC$.

It is found in \cite{ZhuOAM2017} that $\SC$ has a topological complement. In fact, the set of skew-symmetric operators relative to $C$
\[\SSC:=\{X\in\BH: CXC=-X^*\}\] is a topological complement of $\SC$; that is, $\SC+\SSC=\BH$ and $\SC\cap\SSC=\{0\}$.
$\SSC$ is just the symplectic type Cartan factor (see \cite{ChuCH}).
It was shown in \cite{ZhuOAM2017} that $\SSC$ is  Roberts orthogonal to $\SC$. Recall that two operators $A,B\in\BH$ are said to be {\it Roberts orthogonal}, if $\|A-\lambda B\|= \|A+\lambda B\|$ for all complex numbers $\lambda$. %It is known that Roberts
%orthogonality implies Birkhoff orthogonality \cite{Birkhoff35}.

\begin{lemma}
$\SC+\KH$ is a proper, norm-closed Jordan subalgebra of $\BH$.
\end{lemma}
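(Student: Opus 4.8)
The plan is to treat the three assertions—Jordan subalgebra, norm-closedness, and properness—separately, exploiting the transpose map $\tau(X)=X^t=CX^*C$ throughout. First I would verify that $\SC+\KH$ is closed under $\circ$. Expanding a product $(S_1+K_1)\circ(S_2+K_2)$ with $S_i\in\SC$ and $K_i\in\KH$, the term $S_1\circ S_2$ lies in $\SC$ because $\SC$ is a Jordan algebra, while each of the three remaining terms contains a compact factor; since $\KH$ is an associative ideal of $\BH$ it is in particular a Jordan ideal, so those terms are compact. Hence the product lies in $\SC+\KH$, and as $\SC+\KH$ is visibly a linear subspace, it is a Jordan subalgebra.

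For norm-closedness I would first record that $\tau$ is an isometric, period-two linear anti-automorphism of $\BH$ (isometric because $\|CX^*C\|=\|X\|$), that $\SC=\{X:X^t=X\}$ is exactly its fixed-point set, and that $\KH$ is $\tau$-invariant. The key reduction is the identity
\[\SC+\KH=\{T\in\BH:T-T^t\in\KH\}.\]
Indeed, if $T=S+K$ with $S\in\SC$ and $K\in\KH$, then $T-T^t=K-K^t\in\KH$; conversely, if $T-T^t$ is compact, then $T=\frac{1}{2}(T+T^t)+\frac{1}{2}(T-T^t)$ exhibits $T$ as a $C$-symmetric operator plus a compact one. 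Passing to the Calkin algebra $\QH$ via the quotient map $\pi$, the map $T\mapsto\pi(T-T^t)$ is norm-continuous, and $\SC+\KH$ is precisely its kernel; hence $\SC+\KH$ is norm-closed.

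Finally, properness. The same characterization shows that $\SC+\KH=\BH$ would force $T-T^t\in\KH$ for every $T$, i.e. the involution induced by $\tau$ on $\QH$ would be the identity. Since that induced map is an anti-automorphism, this would make $\QH$ commutative, contradicting the well-known noncommutativity of the Calkin algebra. Alternatively, and more concretely, fixing an \onb $\{e_n\}$ with $Ce_n=e_n$ one computes $(T^t)_{ij}=T_{ji}$, so $X^t$ is the matrix transpose of $X$; taking $S$ to be the unilateral shift $Se_n=e_{n+1}$ gives $S^t=S^*$, whence $S-S^t=S-S^*$ is not compact (it sends the weak null sequence $\{e_n\}$ to vectors of norm $\sqrt{2}$), so $S\notin\SC+\KH$. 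I expect the only genuine subtlety to be properness: the closedness and subalgebra properties are formal once $\tau$ is in hand, whereas properness essentially uses that $\BH/\KH$ is noncommutative.
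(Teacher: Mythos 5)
Your proof is correct, but it takes a genuinely different route from the paper's at both non-trivial points. For norm-closedness the paper runs a Cauchy-sequence argument: writing the terms as $A_n+K_n$ with $A_n\in\SC$, $K_n\in\KH$, it splits $K_n=K_n^++K_n^-$ into $C$-symmetric and skew-symmetric parts and invokes the inequality $\|K_n^--K_m^-\|\leq\|(A_n+K_n)-(A_m+K_m)\|$, quoted from \cite{ZhuOAM2017}, to conclude that $K_n^-$ converges to a compact $K^-$ while $A_n+K_n^+$ converges inside the closed subspace $\SC$. Your argument rests on the same underlying decomposition $T=\tfrac{1}{2}(T+T^t)+\tfrac{1}{2}(T-T^t)$, but packaging it as the identity $\SC+\KH=\{T\in\BH:\,T-T^t\in\KH\}$ and then realizing this set as the kernel of the norm-continuous map $T\mapsto\pi(T-T^t)$ makes closedness automatic; the only estimate needed is that $\tau$ is isometric (so the map has norm at most $2$), which is just the triangle inequality. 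Thus your proof is self-contained where the paper leans on an external citation, and it yields as a by-product the sharper structural fact that $\SC+\KH$ is exactly the set of ``essentially $C$-symmetric'' operators. For properness the paper simply observes that every Fredholm operator in $\SC+\KH$ has index $0$, which excludes the unilateral shift; your two alternatives --- noncommutativity of the Calkin algebra $\QH$ forced by a trivial induced anti-automorphism, or the direct check that $S-S^t=S-S^*$ is not compact since it sends the weakly null sequence $\{e_n\}$ to vectors of norm $\sqrt{2}$ --- reach the same conclusion without Fredholm theory. The paper's index argument is the shortest if Fredholm machinery is taken for granted and ties in with its later index-zero discussions; your version is more elementary and gives the explicit characterization of $\SC+\KH$, which the paper's proof does not state.
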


\begin{proof}
Clearly, $\SC+\KH$ is a linear subspace of $\BH$ and closed under the Jordan product. Also each Fredholm operator in $\SC+\KH$ has an index $0$. Hence it suffices to prove that $\SC+\KH$ is norm-closed.

Assume that $\{A_n+K_n\}$ is a Cauchy sequence, where $\{A_n\}\subset\SC$ and $\{K_n\}\subset\KH$.
Set $K_n^+=\frac{1}{2}(K_n+CK_n^*C)$ and $K_n^-=\frac{1}{2}(K_n-CK_n^*C)$. Then $K_n^+\in\SC$ and $K_n^-\in\SSC\cap\KH$.
By \cite[Theorem 2.1]{ZhuOAM2017}, we have
\[ \|K_n^--K_m^-\|\leq \|(A_n+K_n)-(A_m+K_m)\|.\] Thus $\{K_n^-\}$ is a Cauchy sequence and converges to a compact operator $K^-$.
It follows that $\{A_n+K_n^+\}$ is a Cauchy sequence in $\SC$ and converges to an operator $A\in\SC$. Therefore we conclude that $A_n+K_n\rightarrow A+K^-\in\SC+\KH$.
\end{proof}

%It is well known that $\BH$ is the dual of $\B_1(\cH)$ in the sense that each bounded linear functional $l$ on $\cBH$ corresponds uniquely to an operator $A\in\BH$ such that $$l(X)=\tr (AX),\quad \forall X\in\cBH,$$
%where $\tr(\cdot)$ denotes the trace function. In this case, $\|l\|=\|A\|$. Also $\B_1(\cH)$ is isometrically isomorphic to the dual of $\KH$ and
%$\B_q(\cH)$ is isometrically isomorphic to the dual of $\BPH$, where $1<p,q<\infty$ and $\frac{1}{p}+\frac{1}{ q}=1$.

%
%\begin{proposition}
%$\SC\cap\B_1(\cH)$ is isometrically isomorphic to the dual
%$(\SC\cap\KH)'$ of $\SC\cap\KH$. This isomorphism is defined by $\SC\cap\B_1(\cH) \ni K  \longleftrightarrow\phi_K \in\SC\cap\KH)'$, where
%\[\phi_K(X)=\tr(XK), \ \ X\in \SC\cap\KH. \]
%\end{proposition}

The aim of the rest of this subsection is to prove the following result which exhibits the dual relation among $\SC$ and $\SCP $ $(p\in\{0\}\cup[1,\infty)$.

\begin{proposition}\label{P:SchattenP}
\begin{enumerate}
\item[(i)] $(\SCY,\|\cdot\|_1)$ is isometrically isomorphic to the dual
of $(\SCO,\|\cdot\|)$.
\item[(ii)] $\SC$ is isometrically isomorphic to the dual
of $(\SCY,\|\cdot\|_1)$.
\item[(iii)] $(\SCq,\|\cdot\|_q)$ is isometrically isomorphic to the dual
of $(\SCP,\|\cdot\|_p)$, where $1<p,q<\infty$ and $\frac{1}{p}+\frac{1}{ q}=1$.
\end{enumerate}
\end{proposition}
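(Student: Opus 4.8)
The plan is to handle all three dualities by a single mechanism: realize each symmetric Schatten class as a quotient of the ambient class under the trace pairing, and then check that the quotient norm collapses to the intrinsic Schatten norm. Throughout I will use the classical Schatten dualities $\BPH=(\BQH)^*$, $\BO=(\KH)^*$ and $\BH=(\BO)^*$, all implemented by $(A,B)\mapsto\tr(AB)$, together with the fixed \onb\ $\{e_n\}$ with $Ce_n=e_n$. Two structural facts drive everything. First, the symmetrization $P(X)=\tfrac12(X+X^t)$ and its complement $X\mapsto\tfrac12(X-X^t)$ are norm-one projections in every Schatten norm, since $X^t$ is the literal matrix transpose of $X$ in $\{e_n\}$ and hence $\|X^t\|_p=\|X\|_p$; this yields a bounded direct-sum decomposition $\BPH=\SCP\oplus(\SSC\cap\BPH)$, and likewise for $p\in\{0,1\}$ and for $\BH=\SC\oplus\SSC$. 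Second, there is a Roberts-type orthogonality in each $p$-norm: for $A\in\SCP$ and $S\in\SSC\cap\BPH$ one has $\|A-S\|_p=\|(A-S)^t\|_p=\|A+S\|_p$, whence $\|A\|_p\le\tfrac12\|A-S\|_p+\tfrac12\|A+S\|_p=\|A-S\|_p$.

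The heart of the argument is the computation of annihilators under the trace pairing: for $1<p,q<\infty$ I claim $(\SCq)^\perp=\SSC\cap\BPH$ inside $\BPH=(\BQH)^*$, and in the endpoint cases $(\SCO)^\perp=\SSC\cap\BO$ in $\BO=(\KH)^*$ and $(\SCY)^\perp=\SSC$ in $\BH=(\BO)^*$. The inclusion $\supseteq$ is immediate: if $S^t=-S$ and $X^t=X$, then $\tr(XS)=\tr((XS)^t)=\tr(S^tX^t)=-\tr(XS)$, so $\tr(XS)=0$. For $\subseteq$, write any annihilating $B$ as $B=P(B)+(B-P(B))$; the skew summand already annihilates $\SCq$, so the symmetric summand $P(B)\in\SCP$ does too, and it remains to show that no nonzero symmetric operator annihilates all of $\SCq$. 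This nondegeneracy I will get from finite-rank symmetric test operators: with $(u\otimes v)x=\la x,v\ra u$, the operators $e_i\otimes e_j+e_j\otimes e_i$ and $e_i\otimes e_i$ lie in $\SC\cap\FH\subseteq\SCq$, and pairing them against a symmetric $B$ recovers $2\la Be_i,e_j\ra$ and $\la Be_i,e_i\ra$ respectively; if all such pairings vanish, every matrix entry of $B$ vanishes, so $B=0$.

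With the annihilators in hand, the conclusions follow from the standard isometric identification $M^*\cong E^*/M^\perp$ for a closed subspace $M$ of a Banach space $E$. Taking $E=\BQH$ and $M=\SCq$ gives $(\SCq)^*\cong\BPH/(\SSC\cap\BPH)$; the decomposition above identifies this quotient with $\SCP$ as a vector space via $B\mapsto P(B)$, and the Roberts orthogonality shows the quotient norm of $[P(B)]$ equals $\|P(B)\|_p$, so the identification is isometric, giving $\SCP\cong(\SCq)^*$ through $B\mapsto(X\mapsto\tr(XB))$. The symmetric statement, with $p$ and $q$ interchanged, is exactly (iii). The endpoint cases (i) and (ii) run identically with $E=\KH,\ M=\SCO$ and $E=\BO,\ M=\SCY$, the relevant orthogonality being the trace-norm and operator-norm instances of the computation above, the latter being precisely the Roberts orthogonality of $\SSC$ and $\SC$ already recorded. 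I expect the \emph{main obstacle} to be the annihilator identification, and within it the verification that the quotient norm collapses to the genuine Schatten norm: this is exactly where the $p$-norm Roberts orthogonality $\|A+S\|_p=\|A-S\|_p$ is indispensable, whereas the nondegeneracy via matrix units is routine.
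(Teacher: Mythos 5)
Your proposal is correct: the annihilator identification, the quotient-norm collapse, and the matrix-unit nondegeneracy argument all go through, and together they yield all three dualities. The route differs from the paper's in packaging, though the ingredients correspond one-to-one. The paper fixes the explicit map $K\mapsto\phi_K$, $\phi_K(X)=\tr(XK)$, and checks by hand that it is an isometric bijection: isometry is proved by extending $\phi_K$ to the ambient Schatten class and splitting a test operator $Y$ into symmetric and skew parts, invoking the trace orthogonality of Corollary \ref{C:SchattenP} (which the paper establishes by finite truncation against the $C$-fixed basis); surjectivity is proved by Hahn--Banach extension of a functional, the classical Schatten representation of the extension by some operator $K$, and discarding the skew part of $K$. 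You instead compute $(\SCq)^\perp=\SSC\cap\BPH$ (and the endpoint analogues) and invoke the standard isometric identification $M'\cong E'/M^\perp$, then identify the quotient with $\SCP$ via the contractive symmetrization. The translation between the two proofs is exact: your annihilator-plus-quotient step is the paper's surjectivity step, and your collapse of the quotient norm via the $p$-norm Roberts orthogonality is the paper's inequality $\|\widetilde{\phi}_K\|\leq\|\phi_K\|$; both are the single statement that symmetrization is contractive in every Schatten norm. What your packaging buys is uniformity --- all three cases, including (iii), which the paper omits as ``similar'', follow from one lemma instantiated three times --- together with a slicker orthogonality proof, $\tr(XS)=\tr\bigl((XS)^t\bigr)=\tr(S^tX^t)=-\tr(XS)$, replacing the truncation argument; what the paper's buys is explicitness and independence from quotient-space formalism. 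Two facts you use silently should still be written out: transpose-invariance of the trace, $\tr(Y^t)=\tr(Y)$ for $Y\in\B_1(\cH)$, which underlies your orthogonality computation (one line in the $C$-fixed basis, using $\la CY^*e_n,e_n\ra=\la Ce_n,Y^*e_n\ra=\la Ye_n,e_n\ra$), and the $\|\cdot\|_p$-closedness of $\SCO$, $\SCY$, $\SCq$ in $\KH$, $\B_1(\cH)$, $\BQH$ respectively (immediate since $X\mapsto X^t$ is a $\|\cdot\|_p$-isometry), which the identification $M'\cong E'/M^\perp$ requires.
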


In order to prove the preceding proposition, we need to make some preparation.

Given a complex matrix $A$, we denote by $A^{\textup{T}}$ the transpose of $A$.

\begin{lemma}\label{L:SchattenP}
Let $n$ be a positive integer and $M_n(\bC)$ be the collection of $n\times n$ complex matrices.
If $A,B\in M_n(\bC)$ with $A=A^{\textup{T}}$ and $B=-B^{\textup{T}}$, then $\tr(AB)=0$, where $\tr(\cdot)$ is the trace function.
\end{lemma}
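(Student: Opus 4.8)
The plan is to exploit the interaction between the trace, matrix transposition, and the cyclic invariance of the trace; no spectral theory or eigenvalue analysis is needed, since this is a purely algebraic identity. The key observations are the two standard facts that $\tr(M)=\tr(M^T)$ for every square matrix $M$, and that $\tr(XY)=\tr(YX)$ for matrices $X,Y$ of compatible sizes.

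First I would rewrite $\tr(AB)$ using the transpose. Since the trace is invariant under transposition, $\tr(AB)=\tr\bigl((AB)^T\bigr)=\tr(B^TA^T)$. Now I substitute the hypotheses $A^T=A$ and $B^T=-B$ to obtain $\tr(B^TA^T)=\tr\bigl((-B)A\bigr)=-\tr(BA)$. Finally, applying the cyclic property $\tr(BA)=\tr(AB)$ gives the equation $\tr(AB)=-\tr(AB)$, whence $2\tr(AB)=0$ and therefore $\tr(AB)=0$. Since we are over $\bC$, dividing by $2$ is harmless.

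If one prefers a self-contained entrywise argument avoiding the cited trace identities, I would instead write $\tr(AB)=\sum_{i,j}A_{ij}B_{ji}$, then use $A_{ij}=A_{ji}$ and $B_{ji}=-B_{ij}$ to get $\sum_{i,j}A_{ij}B_{ji}=-\sum_{i,j}A_{ji}B_{ij}$, and relabel the summation indices $i\leftrightarrow j$ in the right-hand sum to recognize it as $-\tr(AB)$; the same conclusion $\tr(AB)=-\tr(AB)=0$ follows.

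I do not anticipate any genuine obstacle here: the result is an elementary consequence of the symmetry/skew-symmetry hypotheses, and the only point requiring the slightest care is the bookkeeping of transposes in $(AB)^T=B^TA^T$ (note the order reversal) and the index relabeling in the entrywise version. The lemma is presumably included only as a preparatory computation feeding into the duality statements of Proposition \ref{P:SchattenP}, where $A$ will be a $C$-symmetric finite-rank matrix and $B$ a skew-symmetric one, so the short transpose-based argument is the cleanest to record.
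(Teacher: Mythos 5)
Your proof is correct and follows exactly the paper's argument: the paper likewise computes $\tr(AB)=\tr\bigl((AB)^T\bigr)=\tr(B^TA^T)=-\tr(BA)=-\tr(AB)$ and concludes $\tr(AB)=0$. Your alternative entrywise computation is a fine self-contained variant, but the transpose-based identity you lead with is precisely what the paper records.
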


\begin{proof}
Note that $\tr(AB)=\tr(AB)^{\textup{T}}=\tr(B^{\textup{T}} A^{\textup{T}})=-\tr(B A)=-\tr(AB)$. So $\tr(AB)=0$.
\end{proof}

\begin{corollary}\label{C:SchattenP}
Let $C$ be a conjugation on $\cH$. Assume that $A\in\SC$ and $B\in\SSC$.
If (i) $A\in \B_1(\cH)$, or (ii) $A\in\BPH$ and $B\in \B_q(\cH)$, where $1<p,q<\infty$ and $\frac{1}{p}+\frac{1}{q}=1$, then $\tr(AB)=0$.
\end{corollary}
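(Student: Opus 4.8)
The plan is to reduce the statement to the finite-dimensional Lemma~\ref{L:SchattenP} by compressing $A$ and $B$ to finite-dimensional corners, and then to pass to the limit using convergence in the relevant Schatten norms.

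First I would fix an \onb $\{e_n\}$ of $\cH$ with $Ce_n=e_n$ for all $n$ (such a basis exists, as recalled in the introduction). Relative to $\{e_n\}$ the membership $A\in\SC$ is exactly the symmetry $\la Ae_i,e_j\ra=\la Ae_j,e_i\ra$ of the matrix of $A$, while $B\in\SSC$ is exactly the skew-symmetry $\la Be_i,e_j\ra=-\la Be_j,e_i\ra$ of the matrix of $B$; both are immediate from $CAC=A^*$, $CBC=-B^*$, and $Ce_k=e_k$. Next, let $P_n$ denote the orthogonal projection of $\cH$ onto $\mathrm{span}\{e_1,\dots,e_n\}$. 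Since $C$ fixes each $e_k$, one checks $CP_nC=P_n=P_n^*$, so $P_n\in\SC$. Consequently $C(P_nAP_n)C=(CP_nC)(CAC)(CP_nC)=P_nA^*P_n=(P_nAP_n)^*$ and, likewise, $C(P_nBP_n)C=-(P_nBP_n)^*$, so that $P_nAP_n\in\SC$ and $P_nBP_n\in\SSC$. Viewed on the finite-dimensional space $\ran P_n$, their matrices are respectively the symmetric and skew-symmetric $n\times n$ corners of the matrices of $A$ and $B$; hence Lemma~\ref{L:SchattenP} gives $\tr\big((P_nAP_n)(P_nBP_n)\big)=\tr(P_nAP_nBP_n)=0$ for every $n$.

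The remaining task is to show $\tr(P_nAP_nBP_n)\to\tr(AB)$, which forces $\tr(AB)=0$. In case (ii) this is routine: since $1<p,q<\infty$, finite-rank operators are dense in $\BPH$ and $\BQH$, so $P_nAP_n\to A$ in $\|\cdot\|_p$ while $P_nBP_n\to B$ in $\|\cdot\|_q$; the H\"older inequality for Schatten classes then gives $\|P_nAP_nBP_n-AB\|_1\to 0$, and $|\tr(\cdot)|\le\|\cdot\|_1$ finishes it.

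Case (i) is the delicate point and is where I expect the main obstacle to lie: here only $A$ is trace class while $B$ is merely bounded, so the naive rearrangement $\tr(AB)=\sum_{i,j}\la Ae_j,e_i\ra\la Be_i,e_j\ra$ need not converge absolutely, and the trailing projection converges only strongly. To bypass this I would invoke the duality $\BO^{*}=\BH$: the functional $X\mapsto\tr(AX)$ is ultraweakly continuous for $A\in\BO$, and $P_nBP_n\to B$ ultraweakly because it converges strongly and is uniformly bounded by $\|B\|$. After reducing $\tr(P_nAP_nBP_n)=\tr(AP_nBP_n)$ by cyclicity of the trace, this yields $\tr(AP_nBP_n)\to\tr(AB)$, completing the proof. (Alternatively one can establish $\|P_nAP_nBP_n-AB\|_1\to 0$ directly by splitting off the $(I-P_n)$ factors and combining $\|A-AP_n\|_1\to 0$ with $AB(I-P_n)\to 0$ in trace norm.)
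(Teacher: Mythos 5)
Your proposal is correct and follows essentially the same route as the paper: compress to the finite corners determined by a $C$-fixed orthonormal basis, apply Lemma \ref{L:SchattenP} to the symmetric/skew-symmetric corners, and pass to the limit (with the H\"older argument in case (ii) matching the paper's). The only difference is in case (i), where the paper omits the trailing projection and works with $\tr(P_nAP_nB)$ — which equals the same corner trace by cyclicity — so the single estimate $\|P_nAP_nB-AB\|_1\le\|P_nAP_n-A\|_1\,\|B\|$ gives the limit directly, making the ultraweak-continuity argument you invoke unnecessary.
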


\begin{proof}
We just give the proof in the case (i). The proof for the case (ii) is similar.

Since $C$ is a conjugation on $\cH$, by \cite[Lemma 2.11]{Gar14}, there exists an orthonormal basis $\{e_n\}$ such that $Ce_n=e_n$ for all $n$.
For each $n\geq 1$, denote by $P_n$ the projection of $\cH$ onto $\vee\{e_i: 1\leq i\leq n\}$.

Note that $P_n\rightarrow I$ in the strong operator topology. It follows that $\lim_n\|P_nAP_n-A\|_1=0$ and furthermore
$$\|P_nAP_nB-AB\|_1\leq \|P_nAP_n-A\|_1\cdot\|B\|\rightarrow 0$$ as $n\rightarrow \infty$. Thus $\tr(AB)=\lim_n\tr(P_nAP_nB)$.
It suffices to prove that $\tr(P_nAP_nB)=0$ for all $n$.

For each $n$, assume that
\[A=\begin{bmatrix}
A_n&*\\
*&*
\end{bmatrix}\begin{matrix}
\ran P_n \\ \ran (I-P_n)
\end{matrix},\ \ \ B=\begin{bmatrix}
B_n&*\\
*&*
\end{bmatrix}\begin{matrix}
\ran P_n \\ \ran (I-P_n)
\end{matrix}.\] It follows that $\tr(P_nAP_nB)=\tr(A_nB_n)$.
For $1\leq i,j\leq n$, note that
\begin{align*}
\la A_n e_i,e_j\ra&=\la A e_i,e_j\ra=\la A e_i, C e_j\ra\\
&=\la e_i, A^* C e_j\ra=\la e_i, C A e_j\ra\\
&=\la A e_j, C e_i \ra=\la A e_j, e_i \ra\\
&=\la A_n e_j, e_i \ra
\end{align*} and similarly that $\la B_n e_i,e_j\ra=-\la B_n e_i,e_j\ra$. Thus, relative to $\{e_1,e_2,\cdots,e_n\}$,
$A_n$ admits a symmetric matrix representation and $B_n$ admits a skew symmetric matrix representation (that is, $R=-R^T$).
By Lemma \ref{L:SchattenP}, $\tr(P_nAP_nB)=\tr(A_nB_n)=0$. Therefore we conclude that $\tr(AB)=0$.
\end{proof}

Given a Banach space $\mathcal{X}$, we let $\mathcal{X}'$ denote its dual.

\begin{proof}[Proof of Proposition \ref{P:SchattenP}]
(i) For $K\in \SCY$, define $\phi_K: \SCO \rightarrow \bC$ as
\[\phi_K(X)=\tr(XK), \ \ \forall X\in \SCO. \]
Clearly, $|\phi_K(X)|=|\tr(XK)|\leq\|K\|_1\cdot\|X\|$. Thus $\phi_K\in(\SCO)'$.

It suffices to prove that the map $\Phi: K\longmapsto \phi_K $ is an isometric isomorphism of $\SCY$ onto  $(\SCO)'$.
Clearly,  $\Phi$ is linear. It remains to check that $\Phi$ is isometric and surjective.

{\it Step 1.} $\Phi$ is isometric.

Fix a $K\in \SCY$. $\phi_K$ can be extended to the linear functional $\widetilde{\phi}_K$ on $\KH$ defined by
\[\widetilde{\phi}_K(X)=\tr(XK), \ \  \forall X\in  \KH. \]
Then, by \cite[Theorem 19.1]{Conway00}, $\|\phi_K\|\leq \|\widetilde{\phi}_K\|=\|K\|_1$.

For any $Y\in\KH$, denote $Y_1=\frac{1}{2}(Y+CY^*C)$ and $Y_2=\frac{1}{2}(Y-CY^*C)$. Note that $\|Y_1\|\leq\|Y\|$, $Y_1\in\SCO$ and $Y_2\in\SSC\cap\KH$. By Corollary \ref{C:SchattenP},
\begin{align*}
|\widetilde{\phi}_K(Y)|&=|\tr(KY_1+KY_2)|=|\tr(KY_1)|\\
&=|\phi_K(Y_1)|\leq \|\phi_K\|\cdot \|Y_1\|\\
& \leq \|\phi_K\|\cdot \|Y\|.
\end{align*} Since $Y\in\KH$ was arbitrary operator in $\KH$, we deduce that $\|\widetilde{\phi}_K\|\leq \|\phi_K\|$. Furthermore, we obtain
$ \|\phi_K\|=\|\widetilde{\phi}_K\|=\|K\|_1$. This shows that  $\Phi$ is isometric.

{\it Step 2.} $\Phi$ is surjective.

Assume that $l$ is a bounded linear functional on $\SCO$. Since $\SCO $ is a closed subspace of $\KH$, $l$ admits an extension $\tilde{l}$ to $\KH$. Then, by \cite[Theorem 19.1]{Conway00}, there exists an operator $K\in\B_1(\cH) $ such that $\tilde{l}(X)=\tr(XK)$ for $X\in\KH$.
Denote $K_1=\frac{1}{2}(K+CK^*C)$ and $K_2=\frac{1}{2}(K-CK^*C)$. Then $K_1\in\SCY$ and $K_2\in\SSC\cap\B_1(\cH)$.
For each $X\in\SCO$, we have \begin{align*}
l(X)=\tilde{l}(X)=\tr(XK)=\tr(XK_1)+\tr(XK_2)=\tr(XK_1)=\phi_{K_1}(X),
\end{align*}
which means that $l=\phi_{K_1}$. Thus we conclude that $\Phi$ is surjective.

%where $\tr(\cdot)$ denotes the trace function.

(ii) For $K\in \SC$, define $\psi_K: \SCY\rightarrow \bC$ as
\[\psi_K(X)=\tr(XK), \ \ X\in \SCY. \]
Clearly, $|\psi_K(X)|=|\tr(XK)|\leq\|K\|\cdot\|X\|_1$. Thus $\psi_K\in(\SCY)'$.

It suffices to prove that the map $\Psi: K\longmapsto \psi_K $ is an isometric isomorphism of $\SC$ onto  $(\SCY)'$.
Clearly,  $\Psi$ is linear. It remains to check that $\Psi$ is isometric and surjective.

{\it Step 1.} $\Psi$ is isometric.

Fix a $K\in \SC $. Obviously, $\psi_K$ can be extended to the linear functional $\widetilde{\psi}_K$ on $\B_1(\cH)$ defined by
\[\widetilde{\psi}_K(X)=\tr(XK), \ \ X\in \B_1(\cH). \]
Then, by \cite[Theorem 19.2]{Conway00}, $\|\psi_K\|\leq \|\widetilde{\psi}_K\|=\|K\|$.

For any $Y\in\B_1(\cH)$, denote $Y_1=\frac{1}{2}(Y+CY^*C)$ and $Y_2=\frac{1}{2}(Y-CY^*C)$. Note that $\|Y_1\|_1\leq\|Y\|_1$, $Y_1\in\SCY$ and $Y_2\in\SSC\cap\B_1(\cH)$. By Corollary \ref{C:SchattenP},
\begin{align*}
|\widetilde{\psi}_K(Y)|&=|\tr(KY_1+KY_2)|=|\tr(KY_1)|\\
&=|\psi_K(Y_1)|\leq \|\psi_K\|\cdot \|Y_1\|_1\\
& \leq \|\psi_K\|\cdot \|Y\|_1.
\end{align*} Since $Y $ was arbitrary operator in $\B_1(\cH)$, we deduce that $\|\widetilde{\psi}_K\|\leq \|\psi_K\|$. Furthermore, we obtain
$ \|\psi_K\|=\|\widetilde{\psi}_K\|=\|K\|$. This shows that  $\Psi$ is isometric.

{\it Step 2.} $\Psi$ is surjective.

Assume that $l$ is a bounded linear functional on $\SCY$. Since $\SCY$ is a closed subspace of $\B_1(\cH)$, $l$ admits an extension $\tilde{l}$ to $\B_1(\cH)$. Then, by \cite[Theorem 19.2]{Conway00}, there exists an operator $K\in\BH $ such that $\tilde{l}(X)=\tr(XK)$ for $X\in\B_1(\cH)$.
Denote $K_1=\frac{1}{2}(K+CK^*C)$ and $K_2=\frac{1}{2}(K-CK^*C)$. Then $K_1\in\SC $ and $K_2\in\SSC $.
For each $X\in\SCY$, we have \begin{align*}
l(X)=\tilde{l}(X)=\tr(XK)=\tr(XK_1)+\tr(XK_2)=\tr(XK_1)=\psi_{K_1}(X),
\end{align*}
which means that $l=\psi_{K_1}$. Thus we conclude that $\Psi$ is surjective.

(iii) The proof follows similar lines as those of (i) and (ii), and is omitted.
\end{proof}

%
%\begin{question}
%Determine positive linear functionals on $\SC$.
%\end{question}

%
%$\SC$ is an operator system in the sense of M. Hamana \cite{Hamana}.
%operator space, operator system, complete isometry, Jordan homomorphism

%To this end, we first
%describe two kinds of $C^*$-algebras related to $\SC$.

%%%%%%%%%%%%%%%%%%%%%%%%%

\subsection{$C^*$-algebras contained in $\SC$}
This subsection aims to characterize the $C^*$-algebras contained in $\SC$ and the $C^*$-algebras generated by $\SC$.
This helps to develop operator theory in $\SC$.

For $T\in\BH$, we denote by $J^*(T)$ the Jordan operator algebra generated by $T$, $T^*$ and the identity operator $I$, and denote by $W^*(T)$
the von Neumann algebra generated by $T$.

The first result shows that $\SC$ contains no noncommutative $C^*$-algebra.

\begin{proposition}\label{P:funcCalcu}
Let $C$ be a conjugation on $\cH$ and $T\in\SC$. Then the following are equivalent:
 \begin{enumerate}
 \item[(i)] $C^*(T)\subset \SC$;
 \item[(ii)] $W^*(T)\subset \SC$;
 \item[(iii)] $|T|\in\SC$;
\item[(iv)] $T$ is normal;
  \item[(v)] $C^*(T)=J^*(T)$.
 \end{enumerate}
\end{proposition}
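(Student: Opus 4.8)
The plan is to show the five conditions are equivalent by closing a cycle (iv)$\Rightarrow$(i)$\Rightarrow$(iii)$\Rightarrow$(iv), supplemented by (i)$\Leftrightarrow$(ii) and (iv)$\Leftrightarrow$(v). The basic tool I would set up first is the transpose map $X\mapsto X^t=CX^*C$ on $\BH$. Since $C$ is a conjugation, one checks directly that $t$ is an \emph{involutive linear anti-automorphism} of $\BH$, and that $X\in\SC$ if and only if $X^t=X$; in particular $T^t=T$ and $(T^*)^t=T^*$. The single computation that drives everything is that $CTC=T^*$ forces
\[ C(T^*T)C=(CT^*C)(CTC)=T\,T^*, \]
so $C$ conjugates $T^*T$ to $TT^*$. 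Recording also that $I\in\SC$, that $\SC$ is weakly closed and closed under $\circ$, and that $\SC$ is selfadjoint will make most of the implications routine.

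For the easy implications: if $T$ is normal then $T$ and $T^*$ commute, so each monomial $T^m(T^*)^n$ satisfies $(T^m(T^*)^n)^t=(T^*)^nT^m=T^m(T^*)^n$ and hence lies in $\SC$; taking the norm closure of the $*$-algebra they span gives $C^*(T)\subset\SC$, which is (iv)$\Rightarrow$(i). Since $\SC$ is weakly closed and $W^*(T)$ is the weak closure of $C^*(T)$, (i) yields (ii), while (ii)$\Rightarrow$(i) is immediate from $C^*(T)\subset W^*(T)$. For (i)$\Rightarrow$(iii), note $|T|=(T^*T)^{1/2}$ lies in the $C^*$-algebra generated by $T^*T$, hence in $C^*(T)\subset\SC$.

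The crux, and the step I expect to be the main obstacle, is (iii)$\Rightarrow$(iv). Here I would combine the displayed identity with the fact that $C$ intertwines the continuous functional calculus of positive operators: approximating $\sqrt{\,\cdot\,}$ uniformly on $[0,\|T\|^2]$ by polynomials $p_k$ with real coefficients and using that $A\mapsto CAC$ is multiplicative with $Cp_k(A)C=p_k(CAC)$, I obtain
\[ C|T|C=C(T^*T)^{1/2}C=(CT^*TC)^{1/2}=(TT^*)^{1/2}=|T^*|. \]
Now condition (iii) says $|T|\in\SC$, i.e. $C|T|C=|T|$ since $|T|$ is positive. Comparing the two displays gives $|T|=|T^*|$, equivalently $T^*T=TT^*$, so $T$ is normal.

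Finally, for the Jordan condition I would first observe that $J^*(T)\subset\SC$ always, since $T,T^*,I\in\SC$ and $\SC$ is a norm-closed selfadjoint Jordan subalgebra of $\BH$. Thus (v), namely $C^*(T)=J^*(T)$, forces $C^*(T)\subset\SC$, giving (v)$\Rightarrow$(i). Conversely, if $T$ is normal then $C^*(T)$ is commutative, so on it the Jordan product coincides with the ordinary product; hence the Jordan algebra and the associative $C^*$-algebra generated by $T,T^*,I$ coincide, giving (iv)$\Rightarrow$(v). Since (i)$\Rightarrow$(iii)$\Rightarrow$(iv)$\Rightarrow$(i) is already a closed cycle, adjoining (i)$\Leftrightarrow$(ii) and (iv)$\Leftrightarrow$(v) establishes the equivalence of all five conditions.
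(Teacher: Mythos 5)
Your proof is correct, and it rests on the same engine as the paper's: the identity $C(T^*T)C=(CT^*C)(CTC)=TT^*$ together with the fact that $\SC$ is a weakly (hence norm) closed, selfadjoint Jordan subalgebra containing $I$. The differences are in routing and in two local arguments. The paper runs the single cycle (i)$\Rightarrow$(ii)$\Rightarrow$(iii)$\Rightarrow$(iv)$\Rightarrow$(v)$\Rightarrow$(i), so it gets (ii)$\Rightarrow$(iii) for free from $|T|\in W^*(T)$ and never needs your separate (iv)$\Rightarrow$(i) monomial argument; you instead close (iv)$\Rightarrow$(i)$\Rightarrow$(iii)$\Rightarrow$(iv) and attach (ii) and (v). For the step you call the crux, (iii)$\Rightarrow$(iv), the paper's argument is lighter than yours: from $C|T|C=|T|$ it simply squares, getting $C|T|^2C=|T|^2=T^*T$, and compares with $C|T|^2C=CT^*TC=TT^*$; no intertwining of the continuous functional calculus ($Cp_k(A)C=p_k(CAC)$ for real polynomials) is required. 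Your route through $C|T|C=(CT^*TC)^{1/2}=|T^*|$ is valid and yields the mildly stronger by-product that $C|T|C=|T^*|$ for \emph{every} $T\in\SC$, but it is not the shortest path. Finally, for (iv)$\Rightarrow$(v) the paper computes explicitly with monomials ($T^m$ built from iterated Jordan products, and $T^m{T^*}^n=T^m\circ{T^*}^n$ by normality), whereas your argument -- commutativity of $C^*(T)$ makes $\circ$ agree with the associative product, so the norm-closed selfadjoint Jordan algebra $J^*(T)$ is automatically an associative $*$-algebra containing $T,T^*,I$ and hence contains $C^*(T)$ -- is more abstract and arguably cleaner. All steps in your proposal check out.
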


\begin{proof}
Since $\SC$ is closed in the weak operator topology, the implication  (i)$\Longrightarrow$(ii) is obvious.

(ii)$\Longrightarrow$(iii).  This is obvious.

(iii)$\Longrightarrow$(iv).
Since $|T|\in \SC$, we have $C|T|C=|T|$ and hence $C|T|^2C=|T|^2=T^*T$. On the other hand,
\[C|T|^2C=CT^*TC=(CT^*C)(CTC)=TT^*.\] Thus $T^*T=TT^*$.

(iv)$\Longrightarrow$(v). Since $C^*(T)$ is an associative algebra and hence a Jordan algebra containing $T,T^*$, it follows that $J^*(T)\subset C^*(T)$. Note that $T$ is normal. Thus $C^*(T)$ is the closed linear span of $\{T^m{T^*}^n: m,n\geq 0\}$.
Noting that \[T=T\circ I, \ \ T^2=T\circ T,\ \  T^3=(T\circ T)\circ T, \cdots,\] we have $T^m,{T^*}^n\in J^*(T)$. Also, one can check
\[T^m{T^*}^n=T^m\circ {T^*}^n,\] since $T^m{T^*}^n={T^*}^n T^m$. This shows that $C^*(T)\subset J^*(T)$. We conclude that $C^*(T)= J^*(T)$.

% $$C(T^m{T^*}^n)C={T^*}^m{T}^n=(T^m{T^*}^n)^*.$$ It follows immediately that
%$p(T,T^*)\in\SC$ for any polynomial $p(w_1,w_2)$ in two variables $w_1,w_2$. Furthermore we obtain $C^*(T)\subset \SC$.

(v)$\Longrightarrow$(i). Note that $T,T^*\in\SC$. Thus $J^*(T)$ is a Jordan subalgebra of $\SC$. It follows that $C^*(T)\subset \SC$.
\end{proof}

\begin{remark}\label{R:funcCalcu}
The Jordan product $\circ$ on $\SC$ is not associative. In fact, choose a non-normal $T\in\SC$. Thus $T^*\in\SC$. By Proposition \ref{P:funcCalcu}, $|T|\notin\SC$ and hence $|T|^2=T^*T\notin\SC$.
\end{remark}

 By Proposition \ref{P:funcCalcu}, a good functional calculus is permitted in $\SC$. This will have many useful corollaries.

\begin{corollary}\label{C:funcCalcu}
Let $C$ be a conjugation on $\cH$ and $\cA$ be a JC$^*$-subalgebra of $\SC$. If $T\in\cA$, then
$C^*(T)\subset \cA$ if and only if $T$ is normal.
\end{corollary}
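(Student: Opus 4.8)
The plan is to read this off directly from Proposition \ref{P:funcCalcu}, using the fact that $\cA$ is squeezed between the Jordan algebra generated by $T$ and all of $\SC$. Both implications reduce to a single observation: the normality of $T$ is detected by $\SC$ (the ambient algebra), while the generation of $C^*(T)$ as a \emph{Jordan} algebra is what lets us descend into the subalgebra $\cA$.

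For the forward implication, I would argue by inclusion into the ambient algebra. Suppose $C^*(T)\subset\cA$. Since $\cA$ is a JC$^*$-subalgebra of $\SC$, we have $\cA\subset\SC$, and hence $C^*(T)\subset\SC$. The equivalence (i)$\Longleftrightarrow$(iv) of Proposition \ref{P:funcCalcu} then forces $T$ to be normal. This direction is immediate and uses nothing about $\cA$ beyond $\cA\subset\SC$.

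For the reverse implication, suppose $T$ is normal. By the implication (iv)$\Longrightarrow$(v) of Proposition \ref{P:funcCalcu}, normality yields $C^*(T)=J^*(T)$, where $J^*(T)$ is the norm-closed Jordan operator algebra generated by $T$, $T^*$ and $I$. Now $\cA$ is a norm-closed Jordan subalgebra of $\SC$; being selfadjoint it contains $T^*$ together with $T$, and it contains $I$, so it contains the Jordan algebra generated by $\{T,T^*,I\}$. Since $\cA$ is norm-closed, it contains the closure $J^*(T)$, and therefore $C^*(T)=J^*(T)\subset\cA$, as desired.

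The step meriting care is the role of the identity: the equality $C^*(T)=J^*(T)$ is an equality of \emph{unital} objects, so the concluding inclusion $J^*(T)\subset\cA$ rests on knowing $I\in\cA$. Thus the only genuine hypothesis invoked in the hard direction (beyond Proposition \ref{P:funcCalcu}) is that a JC$^*$-subalgebra of $\SC$ in our sense contains the identity; with that convention in force the argument goes through verbatim, and no further estimate or functional-calculus computation is needed.
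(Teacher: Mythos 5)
Your proof is correct and is precisely the argument the paper intends: the corollary is stated as an immediate consequence of Proposition \ref{P:funcCalcu}, with the forward direction read off from (i)$\Longleftrightarrow$(iv) (using only $\cA\subset\SC$) and the reverse direction from (iv)$\Longrightarrow$(v) together with the fact that $\cA$ is a norm-closed, selfadjoint Jordan subalgebra containing $T$, $T^*$ and $I$. Your remark about the identity is apt and worth keeping: since the paper's $C^*(T)$ contains $I$ by definition, the statement does presuppose $I\in\cA$ (otherwise a compact normal operator $T$ in the JC$^*$-subalgebra $\SC\cap\KH$ would be a counterexample), so making that convention explicit is exactly the right point of care.
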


\begin{corollary}
If $\cA$ is a $C^*$-subalgebra of $\BH$ with $\cA\subset\SC$, then $\cA\subset W^*(N)$ for some normal operator $N\in\SC$.
\end{corollary}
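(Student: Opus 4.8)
The plan is to show that $\cA$ is commutative, replace it by its weak-operator closure, and then invoke the classical fact that a commutative von Neumann algebra acting on a separable space is generated by a single self-adjoint operator.

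First I would verify that every element of $\cA$ is normal. For $T\in\cA$ the $C^*$-algebra $C^*(T)$ is contained in $\cA$, hence in $\SC$, so by Proposition \ref{P:funcCalcu} the operator $T$ is normal. A $C^*$-algebra all of whose elements are normal is abelian: writing an arbitrary element as $X=A+iB$ with $A,B$ self-adjoint and expanding $X^*X=XX^*$ forces $AB=BA$ for all self-adjoint $A,B\in\cA$. Thus $\cA$ is commutative.

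Next I would pass to $\M:=(\cA+\bC I)''$, the von Neumann algebra generated by $\cA$ (here $'$ denotes the commutant in $\BH$). Since $\cA$ is abelian, $\cA$ is contained in its own commutant; taking commutants is order-reversing, so one application gives $\M\subset\cA'$ and a second gives $\M\subset\M'$, whence $\M$ is again commutative. As $I\in\SC$ and $\SC$ is a weakly closed linear subspace of $\BH$, the weak-operator closure of $\cA+\bC I$ lies in $\SC$; by the double commutant theorem this closure equals $(\cA+\bC I)''=\M$. Hence $\M$ is a commutative von Neumann algebra with $\cA\subset\M\subset\SC$.

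Finally I would apply the structure theorem for separably acting commutative von Neumann algebras: since $\cH$ is separable, $\M=W^*(N)$ for some self-adjoint $N\in\M$. Then $N$ is normal, $N\in\SC$, and $\cA\subset\M=W^*(N)$, as claimed. The main obstacle is precisely this last step. One standard route selects a weak-operator dense sequence $\{E_k\}$ of projections in $\M$ — such projections generate $\M$ — and sets $N=\sum_{k}3^{-k}E_k$; the delicate point is that the separation encoded in the coefficients $3^{-k}$ lets one recover each $E_k$ from $N$ by Borel functional calculus, so that $W^*(N)\supset\{E_k\}''=\M$, while $W^*(N)\subset\M$ is automatic.
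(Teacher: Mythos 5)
Your proof is correct and follows the route the paper intends: the corollary is stated there without proof, as an immediate consequence of Proposition \ref{P:funcCalcu} combined with the classical fact (which the paper itself invokes, via \cite[Lemma II.2.8]{Davi96}, in the proof of Theorem \ref{T:WNB}) that a separably acting abelian von Neumann algebra is generated by a single selfadjoint operator. One small point to repair: with the paper's convention $C^*(T)$ contains $I$, so $C^*(T)\subset\cA$ can fail when $\cA$ is not unital; but $C^*(T)\subset\cA+\bC I\subset\SC$ (since $\SC$ is a norm-closed subspace containing $I$), which is all that Proposition \ref{P:funcCalcu} requires.
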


\begin{corollary}
The selfadjoint elements of $\SC$ with finite spectra are norm dense in the selfadjoint elements of $\SC$.
\end{corollary}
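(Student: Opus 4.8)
The plan is to reduce the statement to the classical spectral-theoretic density result, with the functional calculus of Proposition \ref{P:funcCalcu} supplying the one nonclassical ingredient. Fix a selfadjoint $T\in\SC$. Since $T$ is selfadjoint it is in particular normal, so condition (iv) of Proposition \ref{P:funcCalcu} holds, and by the equivalence of the listed conditions we obtain $W^*(T)\subset\SC$. This is the crucial point: it guarantees that the entire von Neumann algebra generated by $T$ lies inside $\SC$, and in particular that every spectral projection of $T$ is $C$-symmetric.

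Next I would apply the spectral theorem to $T$. Let $E(\cdot)$ denote the spectral measure of $T$, supported on $\sigma(T)\subset[-\|T\|,\|T\|]$. Given $\eps>0$, partition $[-\|T\|,\|T\|]$ into finitely many disjoint Borel sets $\Delta_1,\dots,\Delta_n$, each of diameter less than $\eps$, choose a real number $\lambda_k\in\Delta_k$ for each $k$, and set $S=\sum_{k=1}^{n}\lambda_k E(\Delta_k)$. Then $S$ is selfadjoint, since the $\lambda_k$ are real and the $E(\Delta_k)$ are selfadjoint projections; its spectrum is contained in the finite set $\{\lambda_1,\dots,\lambda_n\}$; and because each $E(\Delta_k)\in W^*(T)\subset\SC$ and $\SC$ is a linear subspace, we conclude $S\in\SC$.

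Finally I would estimate the error. On the range of each $E(\Delta_k)$ the operator $T$ has spectrum contained in $\overline{\Delta_k}$ while $S$ acts as the scalar $\lambda_k$, so $\|(T-S)E(\Delta_k)\|\le\sup_{\mu\in\Delta_k}|\mu-\lambda_k|<\eps$. Since the ranges of the $E(\Delta_k)$ are mutually orthogonal and sum to $\cH$, this gives $\|T-S\|<\eps$, which produces the desired finite-spectrum selfadjoint approximant inside $\SC$.

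The bulk of this is just the standard argument that selfadjoint operators with finite spectrum are norm dense among selfadjoint operators. The step I regard as the heart of the matter — and the only place where the structure of $\SC$ really enters — is the containment $W^*(T)\subset\SC$ furnished by Proposition \ref{P:funcCalcu}. Without it there would be no reason to expect the spectral projections $E(\Delta_k)$, or the step-function approximant $S$ assembled from them, to remain $C$-symmetric.
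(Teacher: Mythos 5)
Your proof is correct and follows exactly the route the paper intends: the corollary is stated without proof as an immediate consequence of Proposition \ref{P:funcCalcu}, the point being precisely that for selfadjoint (hence normal) $T\in\SC$ one has $W^*(T)\subset\SC$, so the spectral projections $E(\Delta_k)$ and the step-function approximant $\sum_k\lambda_k E(\Delta_k)$ stay in $\SC$. You have simply written out the standard spectral approximation details that the paper leaves implicit, and they are all in order.
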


Since $\SC\subset\BH$ and $\SC$ is not an associative algebra, it is natural to determine the associative algebra generated by $\SC$.

\begin{proposition}\label{T:CTEnelope}
$\BH$ is the $C^*$-algebra generated by $\SC$.
\end{proposition}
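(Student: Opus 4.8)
The plan is to reduce to the ``doubled'' conjugation $D=\begin{bmatrix}0&C\\C&0\end{bmatrix}$ on $\cH^{(2)}$ and exploit the concrete block description of $\mathcal{S}_D$ supplied by Lemma \ref{L:Jordan}. The $C^*$-algebra generated by a selfadjoint Jordan algebra is preserved under unitary equivalence, and since $\dim\cH=\dim\cH^{(2)}$, Corollary \ref{C:unitary} provides a unitary $U:\cH\to\cH^{(2)}$ with $U\SC U^*=\mathcal{S}_D$. Hence it suffices to prove that the $C^*$-algebra generated by $\mathcal{S}_D$ is all of $\B(\cH^{(2)})$.

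First I would record which building blocks lie in $\mathcal{S}_D$. By Lemma \ref{L:Jordan}(i), for every $A\in\BH$ the operator $\begin{bmatrix}A&0\\0&A^t\end{bmatrix}$ belongs to $\mathcal{S}_D$ (this is the embedding appearing after Theorem \ref{T:Jordan}). Moreover, since $I=I^t$, both $\begin{bmatrix}0&I\\0&0\end{bmatrix}$ and $\begin{bmatrix}0&0\\I&0\end{bmatrix}$ lie in $\mathcal{S}_D$. All of these therefore lie in the generated $C^*$-algebra.

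Next I would fill in the four corners by multiplying these generators. Computing $\begin{bmatrix}A&0\\0&A^t\end{bmatrix}\begin{bmatrix}0&I\\0&0\end{bmatrix}=\begin{bmatrix}0&A\\0&0\end{bmatrix}$ and $\begin{bmatrix}0&0\\I&0\end{bmatrix}\begin{bmatrix}A&0\\0&A^t\end{bmatrix}=\begin{bmatrix}0&0\\A&0\end{bmatrix}$ shows that every strictly upper-triangular and every strictly lower-triangular block operator lies in the algebra, for arbitrary $A\in\BH$. Multiplying these together, e.g.\ $\begin{bmatrix}0&A\\0&0\end{bmatrix}\begin{bmatrix}0&0\\I&0\end{bmatrix}=\begin{bmatrix}A&0\\0&0\end{bmatrix}$ and symmetrically $\begin{bmatrix}0&0\\I&0\end{bmatrix}\begin{bmatrix}0&B\\0&0\end{bmatrix}=\begin{bmatrix}0&0\\0&B\end{bmatrix}$, I obtain every block operator supported in a single corner. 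Adding the four corners realizes an arbitrary $\begin{bmatrix}A&B\\E&F\end{bmatrix}\in\B(\cH^{(2)})$, so the generated $C^*$-algebra equals $\B(\cH^{(2)})$, and transporting back by $U$ gives $C^*(\SC)=\BH$.

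The conceptual point worth flagging is why one cannot argue directly in $\SC$. Producing all rank-one operators $u\otimes v$ (hence $\KH$) from products $AP$, $PA$ with $A\in\SC$ and $P$ a symmetric rank-one projection is easy, but $\KH+\bC I$ is already an irreducible $C^*$-algebra containing the compacts, so compact-plus-scalar information does not force equality with $\BH$. The doubling device is exactly what makes the non-compact part accessible: because the identity is $C$-symmetric, the symmetric off-diagonal swaps $\begin{bmatrix}0&I\\0&0\end{bmatrix}$ and $\begin{bmatrix}0&0\\I&0\end{bmatrix}$ become genuine elements of $\mathcal{S}_D$, and multiplying the diagonal embedding $A\oplus A^t$ against them frees up the two off-diagonal corners over all of $\BH$ simultaneously. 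Thus the only delicate step is the reduction to $\mathcal{S}_D$; once there, the corner computations are routine.
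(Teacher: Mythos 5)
Your proof is correct and takes essentially the same route as the paper: reduce to the doubled conjugation $D$ on $\cH^{(2)}$ via Corollary \ref{C:unitary}, then fill in all four corners of $\B(\cH^{(2)})$ by multiplying the diagonal embedding $X\oplus X^t$ against the $D$-symmetric off-diagonal shifts. The only cosmetic difference is that the paper reaches the upper-left corner through a Jordan product with $I\oplus 0$ (after first noting $M_2(\bC I)\subset C^*(\mathcal{S}_D)$) and obtains the lower-left corner by selfadjointness, whereas you multiply generators directly; the substance is identical.
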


\begin{proof}
Set
$$D=\begin{bmatrix}
0&C\\
C&0
\end{bmatrix}.$$ Thus $D$ is a conjugation on $\cH^{(2)}$.  It suffices to prove that each operator $T$ on $\cH^{(2)}$ lies in the $C^*$-algebra $C^*(\cS_D)$ generated by $\cS_D$.

Note that \[\begin{bmatrix}
0&I\\
0&0
\end{bmatrix}\in\cS_D.\] It follows immediately that $M_2(\bC I)\subset C^*(\cS_D)$. For any $X\in\BH$,
$X\oplus CX^*C\in\cS_D$. One can check that
\[\begin{bmatrix}
X&0\\
0&CX^*C
\end{bmatrix}\circ \begin{bmatrix}
I&0\\
0&0
\end{bmatrix}=\begin{bmatrix}
X&0\\
0&0
\end{bmatrix}\in C^*(\cS_D) \] and
\[\begin{bmatrix}
X&0\\
0&0
\end{bmatrix}\begin{bmatrix}
0&I\\
0&0
\end{bmatrix}=\begin{bmatrix}
0&X\\
0&0
\end{bmatrix}\in C^*(\cS_D).\] Since $C^*(\cS_D)$ is selfadjoint, it follows that
\[\begin{bmatrix}
0&0 \\
X&0
\end{bmatrix}\in C^*(\cS_D)\] and
\[\begin{bmatrix}
0&0\\
X&0
\end{bmatrix}\begin{bmatrix}
0&I\\
0&0
\end{bmatrix}=\begin{bmatrix}
0&0\\
0&X
\end{bmatrix}\in  C^*(\cS_D). \] Therefore we conclude that $\B(\cH^{(2)})=C^*(\cS_D)$.
\end{proof}

\subsection{Jordan automorphisms}\label{S:JordanAuto}

A map $\varphi:\SC\rightarrow\SC$ is called a {\it Jordan automorphism} of $\SC$ if
$\varphi$ is linear, bijective and
$$\varphi(X^*)=\varphi(X)^*, \ \ \varphi(X\circ Y)=\varphi(X)\circ\varphi(Y)$$
for all $X,Y\in\SC$.

The main result of this subsection is the following theorem which determines Jordan automorphisms of $\SC$.

\begin{theorem}\label{T:JordanIso}
A map $\varphi:\SC\rightarrow\SC$ is a Jordan automorphism of $\SC$ if and only if
there exists a unitary operator $V\in\BH$ with $CV=VC$ such that $\varphi(X)=VXV^*$ for all $X\in\SC$.
\end{theorem}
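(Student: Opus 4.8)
The plan is to prove the nontrivial direction: every Jordan automorphism $\varphi$ of $\SC$ is of the form $X\mapsto VXV^*$ for some unitary $V$ commuting with $C$. (The converse is a routine verification: if $CV=VC$, then $VXV^*\in\SC$ whenever $X\in\SC$, and conjugation by $V$ preserves $\circ$ and $*$.) First I would fix an \onb\ $\{e_n\}$ with $Ce_n=e_n$, so that $\SC$ is precisely the operators with symmetric matrices relative to $\{e_n\}$. The key structural input is Theorem \ref{T:Jordan}: since $\varphi$ is a Jordan \emph{automorphism}, it must preserve the Jordan ideal structure of $\SC$. In particular, $\varphi$ carries the unique minimal nonzero pieces to themselves, so it preserves $\SC\cap\FH$ and, by continuity and Corollary \ref{C:minIdeal}, the compact ideal $\SC\cap\KH$.

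The heart of the argument is to understand how $\varphi$ acts on the finite-rank symmetric operators and, in particular, on the rank-one symmetric operators. For $x\in\cH$, write $x\otimes x$ for the operator $y\mapsto\la y,Cx\ra x$; one checks that these rank-one operators lie in $\SC$ and that, up to scalars, they are exactly the rank-one elements of $\SC$. The plan is to show $\varphi$ permutes these rank-one symmetric operators: a rank-one element of $\SC$ can be characterized intrinsically in Jordan-algebraic terms (for instance via minimality of the principal inner ideal it generates, or as the extreme nonzero idempotent-like elements under $\circ$), and since $\varphi$ preserves $\circ$, the self-adjointness structure, and the ideal filtration, it must send rank-ones to rank-ones. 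The hard part will be to promote this combinatorial/order-theoretic action on rank-ones to a genuine spatial implementation — that is, to produce a single unitary $V$ on $\cH$ with $\varphi(x\otimes x)=(Vx)\otimes(Vx)$ for all $x$.

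To build $V$, I would argue that the induced map on the rank-one symmetric operators respects the ``orthogonality'' and ``angle'' data encoded by the bilinear form $[x,y]:=\la x,Cy\ra$, because the Jordan product $\circ$ of two such rank-ones records exactly $[x,y]$. Concretely, $(x\otimes x)\circ(y\otimes y)$ determines $|[x,y]|^2$, so $\varphi$ preserves the associated (possibly degenerate) orthogonality relation; a Wigner-type / fundamental-theorem-of-projective-geometry argument then yields a conjugate-linear-or-linear bijection of $\cH$ inducing $\varphi$ on rank-ones, and the $*$-preservation together with positivity on the self-adjoint part rules out the conjugate-linear case and forces the implementing map to be a genuine unitary $V$. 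The condition $CV=VC$ then falls out: since $V$ must carry $C$-symmetric rank-ones to $C$-symmetric rank-ones, one gets $V(x\otimes x)V^*=(Vx)\otimes(Vx)\in\SC$ automatically, and comparing $C(Vx)=?\,V(Cx)$ on the basis vectors $e_n$ (which satisfy $Ce_n=e_n$) forces $VC=CV$. Finally, I would extend from rank-ones to all of $\SC$: the map $X\mapsto VXV^*$ agrees with $\varphi$ on the dense Jordan ideal $\SC\cap\FH$ (since every finite-rank symmetric operator is a finite $\circ$-combination of rank-ones), and then weak-operator continuity of both maps, together with the density results of this section, upgrades the agreement to all of $\SC$. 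The main obstacle, as indicated, is the spatial-implementation step: converting the purely Jordan-algebraic action on rank-one symmetric operators into a single unitary, and correctly excluding the anti-unitary alternative using the $*$-structure.
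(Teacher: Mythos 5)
Your overall strategy (isolate the rank-one symmetric operators, implement the action on them spatially, then extend) is in the right spirit, but as written it has genuine gaps, one of which is fatal to the final step. The extension from finite-rank operators to all of $\SC$ cannot be done by ``weak-operator continuity of both maps'': a Jordan automorphism of $\SC$ is only assumed linear, bijective, $*$-preserving and $\circ$-multiplicative, and no continuity in any topology is part of the hypothesis. One can prove $\varphi$ is isometric on $\FC$ (as the paper does), but the norm closure of $\FC$ is only $\SC\cap\KH$, not $\SC$, and WOT-continuity of $\varphi$ is precisely what you are not entitled to assume. The paper circumvents this with a purely algebraic argument: for $Y\in\SC$ the operators $Y\circ P_i$ are finite rank, so $\varphi(Y)\circ Q_i=(UYU^*)\circ Q_i$ gives the off-diagonal matrix entries, and the Jordan identity $Q_i\varphi(Y)Q_i=2Q_i\circ\bigl(Q_i\circ\varphi(Y)\bigr)-Q_i^2\circ\varphi(Y)$ gives the diagonal ones; comparing entries relative to $\{g_i\}$ shows $\varphi(Y)=UYU^*$ with no continuity used. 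Without a substitute for this step your argument never reaches the non-compact elements of $\SC$.

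Two further claims are asserted to ``fall out'' when they are actually where the work lies. The Wigner-type spatial implementation, which you yourself flag as the main obstacle, is the real content of the theorem; the paper does it by hand rather than by citing a Wigner/projective-geometry theorem: $\varphi$ sends the rank-one projections $P_i=e_i\otimes e_i$ to rank-one projections $f_i\otimes f_i$ (this uses positivity-preservation, which in turn needs the square root inside $\SC$, i.e.\ Proposition \ref{P:funcCalcu}), then one computes $\varphi(e_i\otimes e_j+e_j\otimes e_i)=b_{i,j}\,f_j\otimes f_i+\overline{b_{i,j}}\,f_i\otimes f_j$ with $|b_{i,j}|=1$, and absorbs the phases by the telescoping choice $\alpha_i=\prod_{k<i}b_{k,k+1}$, using the relations $E_{i,i+k}\circ E_{i+k,i+k+1}=\tfrac12E_{i,i+k+1}$ to propagate agreement to all $E_{i,j}$. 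Finally, the commutation $CV=VC$ does not follow automatically from the rank-one action: the implementing unitary is determined only up to a unimodular scalar, and most choices fail to commute with $C$ (if $V$ commutes with $C$, then $iV$ implements the same automorphism but $(iV)C=-C(iV)$). The paper needs Lemma \ref{L:CD} and Lemma \ref{L:unitaryImple} here: from $\mathcal{S}_C=\mathcal{S}_{UCU^*}$ one gets $UCU^*=\beta C$ for some unimodular $\beta$, and only then does a suitable phase multiple $V=\alpha U$ commute with $C$. Your plan should be revised to include both the algebraic extension step and the phase-adjustment lemma.
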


%
%The preceding theorem shows that every Jordan automorphism of $\SC$ is isometric.

To give the proof of Theorem \ref{T:JordanIso}, we make some preparation.

\begin{lemma}\label{L:CD}
Let $C$ and $D$ be conjugations on $\cH$. Then the following are equivalent.
\begin{enumerate}
\item[(i)] $\mathcal{S}_C=\mathcal{S}_D$
\item[(ii)] $\mathcal{S}_C\subset \mathcal{S}_D$
\item[(iii)] $\mathcal{S}_C\supset \mathcal{S}_D$
\item[(iv)] $C=\alpha D$ for some unimodular $\alpha\in\bC.$
\end{enumerate}
\end{lemma}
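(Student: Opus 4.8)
The plan is to prove the equivalence via the cycle (i) $\Rightarrow$ (ii) $\Rightarrow$ (iv) $\Rightarrow$ (i) together with (i) $\Rightarrow$ (iii) $\Rightarrow$ (iv), with the analytic content concentrated entirely in the step (ii) $\Rightarrow$ (iv). The implications (i) $\Rightarrow$ (ii) and (i) $\Rightarrow$ (iii) are trivial, and (iv) $\Rightarrow$ (i) is an easy computation: if $C=\alpha D$ with $|\alpha|=1$, then $\alpha\bar\alpha=1$ forces $CXC=\alpha\bar\alpha\,DXD=DXD$ for every conjugate-linear evaluation, so the defining condition $CXC=X^*$ coincides with $DXD=X^*$ and hence $\SC=\mathcal{S}_D$. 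Since the condition $C=\alpha D$ is symmetric in $C$ and $D$ (one has $D=\bar\alpha C$), the same argument handles (iii) $\Rightarrow$ (iv) by simply swapping the roles of $C$ and $D$ in the step below.

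The heart of the matter is therefore: assuming $\SC\subset\mathcal{S}_D$, deduce that $C=\alpha D$ for some unimodular $\alpha$. First I would translate the two symmetry conditions into a relation about the single conjugate-linear map $E:=DC$. Observe that $E$ is a bounded, conjugate-linear, invertible operator (being a product of two conjugations, with $E^{-1}=CD=E^*$ in the appropriate sense), so $E$ is in fact a \emph{linear} operator on $\cH$. The inclusion $\SC\subset\mathcal{S}_D$ says that every $X$ satisfying $CXC=X^*$ also satisfies $DXD=X^*$; subtracting, every $X\in\SC$ satisfies $CXC=DXD$, which rearranges to $E X = X E$, i.e. $E=DC$ commutes with every element of $\SC$. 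The plan is then to exploit the richness of $\SC$ as a commutant-determining family: since $\SC$ contains, for instance, all rank-one operators of the form $x\otimes Cx$ (these are $C$-symmetric), the condition that a bounded linear operator $E$ commutes with all of them should pin $E$ down to a scalar multiple of the identity.

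Concretely, I would use that $\SC$ contains enough operators to force $E$ scalar. Fixing the $C$-real orthonormal basis $\{e_n\}$ with $Ce_n=e_n$, the operators $e_i\otimes e_j+e_j\otimes e_i$ lie in $\SC$ (they are symmetric matrices with respect to $\{e_n\}$), and commutation of $E$ with all of these is a strong linear constraint. Testing $E(e_i\otimes e_j+e_j\otimes e_i)=(e_i\otimes e_j+e_j\otimes e_i)E$ against basis vectors forces the matrix of $E$ in this basis to be a scalar matrix, so $E=\lambda I$ for some $\lambda\in\bC$. Thus $DC=\lambda I$, giving $D=\lambda C^{-1}=\lambda C$; applying $D$ twice as the identity (or comparing $\langle Dx,Dy\rangle=\langle y,x\rangle$ with the same for $C$) forces $|\lambda|=1$, which is exactly the desired conclusion with $\alpha=\lambda$.

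The main obstacle I anticipate is the conjugate-linearity bookkeeping: one must be careful that $DC$ is genuinely \emph{complex-linear} (the composite of two conjugate-linear maps is linear) before invoking the scalar-commutant argument, and that the relation extracted from $\SC\subset\mathcal{S}_D$ is the commutation $EX=XE$ rather than some twisted version involving $X^*$ or $\bar{X}$. Once that algebra is set up correctly, showing that an operator commuting with all symmetric rank-two combinations $e_i\otimes e_j+e_j\otimes e_i$ must be scalar is a routine matrix computation, and verifying $|\alpha|=1$ from the inner-product invariance of conjugations is immediate.
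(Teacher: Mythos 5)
Your proposal is correct and is essentially the paper's argument: both reduce the lemma to (ii) $\Rightarrow$ (iv) and extract the conclusion by testing against the operators $e_i\otimes e_i$ and $e_i\otimes e_j+e_j\otimes e_i$ formed from an orthonormal basis fixed by $C$. The only difference is packaging: you recast the hypothesis as the statement that the linear (indeed unitary) operator $DC$ commutes with every element of $\SC$ and then show $DC=\lambda I$, whereas the paper applies $DXD=X^*$ to the same test operators to get $De_i=\alpha_i e_i$ and then equalizes the $\alpha_i$ --- the underlying computations coincide.
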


\begin{proof}
It suffices to prove (ii)$\Longrightarrow$(iv).

``(ii)$\Longrightarrow$(iv)". Assume that $\{e_i\}_{i\geq1}$ is an orthonormal basis (\onb) of $\cH$
such that $Ce_i=e_i$ for all $i$. Easy to see $e_i\otimes e_i\in\SC$ for $i\geq 1$.
Thus $e_i\otimes e_i\in \mathcal{S}_D$.
That is, $D(e_i\otimes e_i)D=e_i\otimes e_i$.
Since $D(e_i\otimes e_i)D=(De_i)\otimes (De_i)$, there exists unimodular $\alpha_i\in\bC$ such that $De_i=\alpha_i e_i$.

Now it remains to show that $\alpha_i=\alpha_1$ for all $i\geq 2$.
Assume that $i\geq 2$. Note that $X=e_1\otimes e_i+e_i\otimes e_1\in\SC$.
Thus $DXD=X^*=X$. In particular,
$DXDe_i=Xe_i$. Since $Xe_i=e_1$ and $DXDe_i=DX(\alpha_i e_i)=\alpha_1\overline{\alpha_i}e_1,$ it follows that $e_1=\alpha_1\overline{\alpha_i}e_1$, that is, $\alpha_1=\alpha_i$.
\end{proof}

\begin{lemma}\label{L:unitaryImple}
Let $C$ be a conjugation on $\cH$ and $U$ be a unitary operator on $\cH$. For $X\in\BH$, $\psi(X)=UXU^*$. Then
$\psi(\SC)=\SC$ if and only if $C$ commutes with $\alpha U$ for some unimodular $\alpha\in\bC.$
\end{lemma}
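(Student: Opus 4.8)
The plan is to prove Lemma \ref{L:unitaryImple} by reducing it to Lemma \ref{L:CD}. The key observation is that $\psi(X) = UXU^*$ maps $\SC$ onto itself precisely when the conjugate operator $U C U^*$ gives the same symmetry class as $C$, and then Lemma \ref{L:CD} converts the equality of symmetry classes into the stated commutation condition.

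First I would set $D = UCU^*$ and check that $D$ is again a conjugation on $\cH$: conjugate-linearity, the involution property $D^{-1}=D$, and the inner-product identity $\la Dx, Dy\ra = \la y, x\ra$ all follow from the corresponding properties of $C$ together with $U$ being unitary. Next, the central computation: for $X \in \BH$, I claim $\psi(X) = UXU^* \in \SC$ if and only if $X \in \mathcal{S}_D$. Indeed, $UXU^* \in \SC$ means $C(UXU^*)C = (UXU^*)^*$, i.e. $CUXU^*C = UX^*U^*$. Conjugating both sides by $U^*$ on the left and $U$ on the right, and using $U^*CU = (UCU^*)^{-1}\cdot\text{(adjustment)}$—more directly, rewriting $U^*CUXU^*CU = X^*$, and recognizing $U^*CU = U^*(UCU^*)U \cdot(\ldots)$. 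Cleaner: from $CUXU^*C = UX^*U^*$ multiply left by $U^*$ and right by $U$ to get $(U^*CU)X(U^*CU) = X^*$; since $U^*CU = U^*(UCU^*)U$ is awkward, I would instead directly verify that $U^*CU$ is the conjugation whose symmetric operators are $\{X : \psi(X)\in\SC\}$, obtaining $\psi(\SC)\subset\SC$ iff $\SC \subset \mathcal{S}_{U^*CU}$. Thus $\psi(\SC)=\SC$ is equivalent to $\mathcal{S}_{U^*CU} = \SC$ (using that $\psi$ is a bijection of $\BH$, so the image is all of $\SC$ exactly when the two symmetry classes coincide).

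Having reduced to $\mathcal{S}_{U^*CU} = \mathcal{S}_C$, I apply Lemma \ref{L:CD} with the two conjugations $C$ and $U^*CU$: condition (i) holds if and only if condition (iv) holds, namely $U^*CU = \alpha C$ for some unimodular $\alpha \in \bC$. The final step is to massage this into the desired commutation statement. From $U^*CU = \alpha C$ I would multiply on the left by $U$ to get $CU = \alpha U C$; since $C$ is conjugate-linear, $C(\alpha U) = \bar\alpha \, CU = \bar\alpha \alpha\, UC \cdot(\ldots)$, so I must track the complex-conjugation carefully. Writing the claim as ``$C$ commutes with $\beta U$ for some unimodular $\beta$,'' i.e. $C(\beta U) = (\beta U)C$, and expanding $C(\beta U) = \bar\beta\, CU$ against $(\beta U)C = \beta\, UC$, the relation $CU = \alpha UC$ becomes $\bar\beta\, \alpha\, UC = \beta\, UC$, which is solvable for unimodular $\beta$ precisely when such an $\alpha$ exists (take $\beta^2 = \alpha$, a unimodular square root). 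Conversely, any commutation $C(\beta U) = (\beta U)C$ unwinds to $U^*CU = \alpha C$ with $\alpha = \beta^2$.

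The main obstacle I anticipate is bookkeeping the conjugate-linearity of $C$ throughout: every time $C$ passes a scalar it conjugates it, so the passage between ``$U^*CU = \alpha C$'' and ``$C$ commutes with $\beta U$'' is where sign/phase errors would creep in, and one must confirm that a unimodular square root $\beta$ of $\alpha$ exists (it always does on $\bC$) so that the two formulations are genuinely equivalent. Everything else—verifying $UCU^*$ is a conjugation and the equivalence $\psi(X)\in\SC \Leftrightarrow X\in\mathcal{S}_{U^*CU}$—is routine algebra, and the heavy lifting is already done by Lemma \ref{L:CD}.
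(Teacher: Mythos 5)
Your proposal is correct and takes essentially the same route as the paper: the paper computes $\psi(\SC)=\mathcal{S}_{UCU^*}$ while you compute $\psi^{-1}(\SC)=\mathcal{S}_{U^*CU}$ (trivially equivalent), and both then invoke Lemma \ref{L:CD} to get $UC=\beta CU$ and finish by choosing a unimodular square root to produce the commuting multiple of $U$. Your opening claim with $D=UCU^*$ is the wrong conjugation for the stated equivalence, but you correct this to $U^*CU$ in the course of the computation, so the final argument is sound.
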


\begin{proof}
The sufficiency is obvious. We need only prove the necessity.

``$\Longrightarrow$". Since $U$ is unitary, it is easy to check that $\psi(\SC)=\mathcal{S}_{UCU^*}$. So $\SC=\mathcal{S}_{UCU^*}$.
By Lemma \ref{L:CD}, there exists unimodular $\beta\in\bC$ such that $UCU^*=\beta C$. Thus $UC=\beta CU$.
Assume that $\alpha\in\bC$ satisfies $\alpha^2=\overline{\beta}$.
Then $$(\alpha U)C=\alpha (UC) =(\alpha\beta) CU=\overline{\alpha } CU= C(\alpha U).$$
This ends the proof.
\end{proof}

If $C$ is a conjugation on $\cH$, then we denote by $\FC$ the collection of all finite-rank operators in $\SC$.

\begin{lemma}\label{L:JIBasic}
Let $\varphi$ be a Jordan automorphism of $\SC$.
Then
\begin{enumerate}
\item[(i)] $\varphi(\FC)=\FC$;
\item[(ii)] $\varphi(X^2)=\varphi(X)^2$ for all $X\in \SC$;
\item[(iii)] if $X\in\SC$ is positive, then so is $\varphi(X)$;
\item[(iv)] if $X\in\SC$ is a projection of rank one, then so is $\varphi(X)$;
\item[(v)] if $X\in\SC$ is selfadjoint, then $\|\varphi(X)\|=\|X\|$.
\end{enumerate}
\end{lemma}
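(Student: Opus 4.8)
The plan is to prove the five assertions in a logical order that lets each use its predecessors, since they form a natural chain: (ii) is the purely algebraic heart, (i) and (iii)--(v) are consequences of standard Jordan-algebraic reasoning combined with the ideal description from Theorem \ref{T:Jordan}.

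First I would prove (ii). Since $\varphi$ is linear and preserves the Jordan product, for any $X\in\SC$ we have $\varphi(X)^2=\varphi(X)\circ\varphi(X)=\varphi(X\circ X)=\varphi(X^2)$, where the middle equality is just the definition of $L_T$-style linearization; here $X\circ X=\frac{1}{2}(X^2+X^2)=X^2$, so the identity is immediate. This is the easy base case. For (iii), if $X\in\SC$ is positive, write $X=Y^2$ for a positive $Y\in\SC$ (the positive square root of $X$ lies in $\SC$ because $\SC$ is a selfadjoint Jordan algebra closed under continuous functional calculus of selfadjoint elements, as noted in the discussion of $J^*$-algebras following \cite{harris}). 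Then by (ii), $\varphi(X)=\varphi(Y^2)=\varphi(Y)^2$; since $\varphi$ preserves adjoints, $\varphi(Y)$ is selfadjoint, hence $\varphi(Y)^2$ is positive. For (v), I would use (iii) together with the order-theoretic characterization of the norm: for selfadjoint $X$, $\|X\|$ is the smallest $t\ge 0$ with $-tI\le X\le tI$; since $\varphi$ preserves positivity and $\varphi(I)=I$ (which follows because $I$ is the Jordan unit, $I\circ X=X$, and $\varphi$ is a bijective Jordan morphism, so $\varphi(I)$ acts as the unit), the inequalities transfer in both directions, giving $\|\varphi(X)\|=\|X\|$.

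Next I would handle (i), the preservation of finite-rank operators. The key observation is that $\FC=\SC\cap\FH$ is, by Theorem \ref{T:Jordan} and Corollary \ref{C:minIdeal}, the smallest nontrivial Jordan ideal of $\SC$. Since $\varphi$ is a Jordan \emph{automorphism}, it maps Jordan ideals bijectively onto Jordan ideals: if $\cJ$ is a Jordan ideal then for $A\in\SC$ and $X\in\cJ$ we have $\varphi(A)\circ\varphi(X)=\varphi(A\circ X)\in\varphi(\cJ)$, and as $\varphi$ is surjective every element of $\SC$ is of the form $\varphi(A)$, so $\varphi(\cJ)$ is a Jordan ideal; the same applies to $\varphi^{-1}$. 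Being an order-isomorphism of the lattice of Jordan ideals, $\varphi$ must fix the minimal nontrivial one, so $\varphi(\FC)=\FC$.

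Finally, for (iv), let $X\in\SC$ be a rank-one projection. By (iii) and (ii), $\varphi(X)$ is a positive idempotent (indeed $\varphi(X)^2=\varphi(X^2)=\varphi(X)$ and $\varphi(X)^*=\varphi(X^*)=\varphi(X)$), hence $\varphi(X)$ is a projection. By (i), $\varphi(X)\in\FC$, so it is a finite-rank projection; the remaining point is that its rank is exactly one. The natural way to pin this down is to characterize rank-one projections in $\SC$ intrinsically among all projections by a Jordan-algebraic minimality condition---namely that $X$ is a minimal projection, in the sense that the only projections $P\in\SC$ with $X\circ P=P$ (equivalently $P\le X$ in the appropriate order coming from the Jordan structure) are $P=0$ and $P=X$---and then observe that $\varphi$, being a Jordan automorphism preserving projections and their order, preserves minimality. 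I expect this last step to be the main obstacle: verifying that rank-one projections coincide with the Jordan-algebraically minimal nonzero projections of $\SC$ requires care, because the order on projections induced by the Jordan product is subtler than in an associative algebra, and one must rule out the possibility that $\varphi$ sends a rank-one projection to a projection of rank two or higher that happens to be minimal for a spurious reason. Once minimality is shown to be both intrinsic and preserved, (iv) follows.
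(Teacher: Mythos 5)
Your proposal follows the paper's own route for (i)--(iii): (ii) is immediate from $X\circ X=X^2$; (i) follows because $\FC$ is the smallest nonzero Jordan ideal of $\SC$ (Corollary \ref{C:minIdeal}) and a Jordan automorphism permutes Jordan ideals, hence fixes the minimal one; (iii) is the paper's argument almost verbatim (the paper invokes Proposition \ref{P:funcCalcu} to place the positive square root $Y$ of $X$ inside $\SC$, which is the precise reference you want rather than the general $J^*$-algebra discussion). For (v) you genuinely diverge: the paper restricts $\varphi$ to the commutative $C^*$-algebra $C^*(X)\subset\SC$ and asserts that this restriction is a faithful representation, hence isometric; you instead use $\varphi(I)=I$ (uniqueness of the Jordan unit), two-sided positivity preservation (applying (iii) to both $\varphi$ and $\varphi^{-1}$), and the order characterization of the norm of a selfadjoint operator. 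Your argument is correct and, if anything, more self-contained: it sidesteps the question of why $\varphi$ is genuinely multiplicative (not merely Jordan-multiplicative) and continuous on all of $C^*(X)$, which the paper's one-line argument leaves implicit.

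The one gap is the step you yourself flagged in (iv): you reduce the claim to showing that the minimal nonzero projections of $\SC$ are exactly the rank-one projections in $\SC$, but leave this unproved and even suggest it might fail. It does not fail, and the verification is short; this is also precisely the point the paper hides behind ``follows readily''. If $P\in\SC$ is a projection, then $CPC=P^*=P$, so $CP=PC$ and $C$ maps $\ran P$ into itself; thus $C|_{\ran P}$ is a conjugation on $\ran P$. If $\rank P\geq 2$, then by \cite[Lem. 2.11]{Gar14} (used throughout the paper) $\ran P$ contains a unit vector $e$ with $Ce=e$, and $Q=e\otimes e$ is a projection lying in $\SC$ with $0\neq Q\leq P$ and $Q\neq P$; hence $P$ is not minimal in $\SC$. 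Conversely, a rank-one projection in $\SC$ is minimal even in $\BH$. Combined with the facts you already have --- $\varphi$ and $\varphi^{-1}$ both send projections to projections and preserve the order on them (for projections, $Q\leq P$ is equivalent to the Jordan identity $Q\circ P=Q$, or simply to positivity of $P-Q$, and both formulations are preserved in both directions) --- this closes (iv) and completes your proof.
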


\begin{proof}
(i) By Corollary \ref{C:minIdeal}, $\FC$ is the smallest nonzero Jordan ideal of $\SC$. It follows immediately that $\varphi(\FC)=\FC$.

(ii) For $X\in \BH$, note that $X\circ X=X^2$. Then the result follows readily.

(iii) Since $X$ is positive, it follows that $X=Y^2$ for some positive $Y\in\BH$.
In view of Proposition \ref{P:funcCalcu}, it can be required that $Y\in\SC$. Then $\varphi(X)=\varphi(Y^2)=\varphi(Y)^2$.
Note that $\varphi(Y)=\varphi(Y^*)=\varphi(Y)^*$. Thus $\varphi(X)$ is positive.

(iv) From (ii) and (iii), one can see that $\varphi(X)$ is a projection. It remains to check that $\rank\varphi(X)=1$.
Note that $\varphi$ maps positive operators to positive operators. Then $\varphi$ maps minimal projections to minimal projections. So the desired result follows readily.

(v) Denote by $C^*(X)$ the $C^*$-subalgebra of $\BH$ generated by $X$ and the identity $I$.
Then $C^*(X)$ is commutative and $C^*(X)\subset \SC$.
One can check that $\varphi|_{C^*(X)}$ is a faithful representation of $C^*(X)$, and hence is isometric.
\end{proof}

\begin{proposition}\label{P:unitImple}
Let $\varphi$ be a Jordan automorphism of $\SC$ and $\{e_i\}_{i\geq1}$ be an \onb~ of $\cH$
such that $Ce_i=e_i$ for all $i$.  Then there exists a unitary operator $U\in\BH$ such that
$\varphi(X)=UXU^*$ for all $X\in\Theta$, where
$$\Theta=\{X\in\SC: \exists n\geq 1 ~\textup{such that}~ \la Xe_i,e_j\ra= 0~ \textup{whenever}~ i+j\geq n\}$$
\end{proposition}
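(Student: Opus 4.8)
The plan is to test $\varphi$ first on the ``diagonal'' rank-one projections $P_i:=e_i\otimes e_i$, then on the symmetric matrix units, and to build the unitary $U$ from the images of the $P_i$. Since $Ce_i=e_i$, each $P_i$ is a rank-one projection in $\SC$, so by Lemma \ref{L:JIBasic}(iv) each $\varphi(P_i)$ is again a rank-one projection in $\SC$. For $i\ne j$ we have $P_i\circ P_j=0$, whence $\varphi(P_i)\circ\varphi(P_j)=0$; for projections $Q_1\circ Q_2=0$ forces $Q_1Q_2=0$, so the ranges are orthogonal. Thus I may write $\varphi(P_i)=g_i\otimes g_i$ with $\{g_i\}$ orthonormal, and membership $\varphi(P_i)\in\SC$ gives $(Cg_i)\otimes(Cg_i)=g_i\otimes g_i$, i.e. $Cg_i=\mu_i g_i$ with $|\mu_i|=1$.

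The first genuine point is that $\{g_i\}$ is a complete orthonormal basis. If not, $\mathcal K:=\overline{\operatorname{span}}\{g_i\}$ is a proper subspace, and it is $C$-invariant because $Cg_i\in\bC g_i$; hence $C$ restricts to a conjugation on the nonzero subspace $\mathcal K^\perp$, and by \cite[Lem. 2.11]{Gar14} I can pick a unit vector $h\in\mathcal K^\perp$ with $Ch=h$, so that $h\otimes h\in\SC$. Applying the (Jordan automorphism) inverse $\varphi^{-1}$ and repeating the previous paragraph, $\varphi^{-1}(h\otimes h)=q\otimes q$ is a rank-one projection orthogonal to every $P_i$, forcing $q\perp e_i$ for all $i$, hence $q=0$, a contradiction. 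Therefore $Ue_i:=g_i$ extends to a unitary $U\in\BH$, and $\varphi(P_i)=UP_iU^*$ for all $i$.

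It remains to treat the symmetric matrix units $X_{ij}:=e_i\otimes e_j+e_j\otimes e_i$ $(i\ne j)$, which together with the $P_i$ span $\Theta$. Using that $\varphi$ preserves adjoints, squares and Jordan products (Lemma \ref{L:JIBasic}), the identities $X_{ij}^*=X_{ij}$, $X_{ij}^2=P_i+P_j$ and $P_i\circ X_{ij}=\tfrac12 X_{ij}$ transport to show that $W_{ij}:=\varphi(X_{ij})$ is selfadjoint, is supported on $\operatorname{span}\{g_i,g_j\}$ with $W_{ij}^2=\varphi(P_i)+\varphi(P_j)$, and satisfies $\varphi(P_i)\circ W_{ij}=\tfrac12 W_{ij}$. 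A short computation in the plane $\operatorname{span}\{g_i,g_j\}$ then pins down $W_{ij}=b_{ij}\,g_i\otimes g_j+\overline{b_{ij}}\,g_j\otimes g_i$ for some unimodular $b_{ij}$, and symmetry of $X_{ij}$ in $i,j$ gives $b_{ji}=\overline{b_{ij}}$. The relation $X_{ij}\circ X_{jk}=\tfrac12 X_{ik}$ (distinct $i,j,k$) transports under $\varphi$ to the cocycle identity $b_{ik}=b_{ij}b_{jk}$.

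The final step is a coherent phase correction. By the cocycle identity, setting $\lambda_1=1$ and $\lambda_i=b_{i1}$ produces unimodular scalars with $\lambda_i\overline{\lambda_j}=b_{ij}$ for all $i\ne j$. Replacing each $g_i$ by $\lambda_i g_i$ leaves $\{g_i\}$ an orthonormal basis and does not change $\varphi(P_i)=g_i\otimes g_i$, while it sends $b_{ij}\mapsto b_{ij}\overline{\lambda_i}\lambda_j=1$; redefining $U$ by $Ue_i=\lambda_i g_i$ therefore gives $\varphi(X_{ij})=UX_{ij}U^*$ as well. Since $\varphi$ and $X\mapsto UXU^*$ are both linear and now agree on the spanning set $\{P_i\}\cup\{X_{ij}\}$ of $\Theta$, they agree on all of $\Theta$. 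I expect the main obstacle to be the off-diagonal analysis: isolating the single scalar $b_{ij}$ for each pair and, above all, verifying the cocycle identity that lets one unitary straighten out every $X_{ij}$ simultaneously; the completeness of $\{g_i\}$ is the other place where genuine care—and the surjectivity of $\varphi$—is needed.
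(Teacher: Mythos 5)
Your proof is correct and follows essentially the same route as the paper's: the images of the diagonal projections $P_i$ give an orthonormal system of rank-one projections (whose completeness you, unlike the paper, justify explicitly via $\varphi^{-1}$ and surjectivity), the images of the symmetric matrix units are pinned down as off-diagonal operators with unimodular phases, and a phase correction yields the unitary $U$. The only cosmetic difference is that you fix the phases via the cocycle identity $b_{ik}=b_{ij}b_{jk}$ routed through the index $1$, whereas the paper solves the same coherence problem recursively along consecutive indices using the products $b_{k,k+1}$.
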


\begin{proof}
For $i\geq 1$, set $P_i=e_i\otimes e_i$ and $Q_i=\varphi(P_i)$.
Clearly, $P_i$ is a projection of rank one.
In view of Lemma \ref{L:JIBasic}, $Q_i$ is a projection of rank one.
Then there exists a unit vector $f_i$ such that $Q_i=f_i\otimes f_i$.
Using Lemma \ref{L:JIBasic} again, one can see that $\{f_i\}_{i\geq1}$ is an \onb~ of $\cH$.

{\it Claim.} There exists unimodular numbers $\{\alpha_i\}_{i\geq 1}$ such that $\varphi(e_i\otimes e_j+e_j\otimes e_i)=(\alpha_if_i)\otimes (\alpha_jf_j)+(\alpha_jf_j)\otimes (\alpha_if_i)$.

For $i,j\geq 1$ with $i<j$, denote $E_{i,j}=e_i\otimes e_j+e_j\otimes e_i$. Then $E_{i,j}$ lies in $\SC$ and is selfadjoint. Denote
$F_{i,j}=\varphi(E_{i,j})$. One can see that $F_{i,j}$ is selfadjoint and, by Lemma \ref{L:JIBasic} (v), $\|F_{i,j}\|=1$.

Now fix $i,j\geq 1$ with $i< j$.
Note that
$Q_k\circ F_{i,j}=\varphi(P_k\circ E_{i,j})=0$ whenever $k\notin\{i,j\}$.
So $$F_{i,j}=a_{i,j} f_i\otimes f_i+b_{i,j} f_j\otimes f_i+\overline{b_{i,j}} f_i\otimes f_j+c_{i,j} f_j\otimes f_j$$ for some $a_{i,j},b_{i,j},c_{i,j}\in\bC$.
Thus $$Q_i\circ F_{i,j}=a_{i,j} f_i\otimes f_i+\frac{b_{i,j}}{2} f_j\otimes f_i+\frac{\overline{b_{i,j}}}{2} f_i\otimes f_j.$$
On the other hand, note that $$Q_i\circ F_{i,j}=\varphi(P_i\circ E_{i,j})=\frac{1}{2}\varphi(E_{i,j})=\frac{1}{2}F_{i,j}.$$
It follows that $a_{i,j}=0=c_{i,j}$ and $|b_{i,j}|=1$. Hence
$$F_{i,j}=b_{i,j} f_j\otimes f_i+\overline{b_{i,j}} f_i\otimes f_j.$$

Set $\alpha_{1}=1$ and $\alpha_i=\prod_{k=1}^{i-1}b_{k,k+1}$ for $i\geq 2$.
Set $g_i=\alpha_if_i$ $i\geq 1$ and $G_{i,j}=g_i\otimes g_{j}+g_{j}\otimes g_i$ for $i>j\geq 1$.

%Since $E_{i,j}\circ E_{i,j}=P_i +P_j$, it follows that
%$$F_{i,j}\circ F_{i,j}=\varphi(E_{i,j}\circ E_{i,j})=\varphi(P_i+P_j)= Q_i+Q_j.$$
%

For $i\geq 1$,
\begin{align*}
  G_{i,i+1}&=g_i\otimes g_{i+1}+g_{i+1}\otimes g_i\\
  &=(\alpha_if_i)\otimes (\alpha_{i+1}f_{i+1})+(\alpha_{i+1}f_{i+1})\otimes (\alpha_if_i)\\
  &=\overline{b_{i,i+1}}(f_i\otimes f_{i+1})+b_{i,i+1} (f_{i+1}\otimes f_i)=\varphi(E_{i,i+1}).
\end{align*}
That is,
\[
\varphi(E_{i,i+1})=G_{i,i+1}. \eqno(2)
\]

For $i\geq 1$ and $k\geq 1$, we check that
$$E_{i,i+k}\circ E_{i+k,i+k+1}=\frac{1}{2}E_{i,i+k+1}\ \  \textup{and}\ \  G_{i,i+k}\circ G_{i+k,i+k+1}=\frac{1}{2}G_{i,i+k+1}.$$
In view of (2), we have  $\varphi(E_{i,i+k})=G_{i,i+k}$. This proves Claim.

Clearly, $\{g_i\}_{i\geq 1}$ is an \onb~ of $\cH$. Define unitary $U\in\BH$ as $Ue_i=g_i$ for $i\geq 1$.
Then, for $i,k\geq 1$, $$\varphi(P_{i})=Q_i=f_i\otimes f_i=g_i\otimes g_i=(Ue_i)\otimes (Ue_i)=UP_iU^*$$ and
$$\varphi(E_{i,i+k})=G_{i,i+k}=g_i\otimes g_{i+k}+g_{i+k}\otimes g_{i}=UE_{i,i+k}U^*.$$

Clearly, $\Theta$ equals the linear span of $\{ P_i, E_{i,j}: j>i\geq 1\}$. So $\varphi(X)=UXU^*$ for
all $X\in \Theta$.
\end{proof}

\begin{corollary}
If $\varphi$ is a Jordan automorphism of $\SC$, then $\|\varphi(X)\|=\|X\|$ for $X\in \FC$.
\end{corollary}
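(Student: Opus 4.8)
The plan is to upgrade the partial result of Proposition \ref{P:unitImple} --- which produces a unitary $U\in\BH$ implementing $\varphi$ only on the subspace $\Theta$ --- to all of $\FC$, and then to read off the isometry from the fact that conjugation by a unitary preserves the operator norm. The point to confront is that $\Theta$ is a \emph{proper} subspace of $\FC$: a finite-rank $C$-symmetric operator such as $v\otimes v$ with $Cv=v$ and $v$ having all coordinates nonzero lies in $\FC$ but not in $\Theta$, since its matrix has infinitely many nonzero entries. Thus the identity $\varphi(X)=UXU^*$ must be propagated from $\Theta$ to all of $\FC$ by an approximation argument, and for that I first need a form of continuity of $\varphi$.

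First I would show that $\varphi$ is bounded on $\FC$. Given $X\in\FC$, decompose $X=A+iB$ with $A=\frac{1}{2}(X+X^*)$ and $B=\frac{1}{2i}(X-X^*)$; both $A$ and $B$ are selfadjoint and finite-rank, and since $X^*\in\SC$ whenever $X\in\SC$ they again lie in $\FC$, with $\|A\|,\|B\|\leq\|X\|$. By Lemma \ref{L:JIBasic}(v), $\varphi$ is isometric on selfadjoint elements, so $\|\varphi(A)\|=\|A\|$ and $\|\varphi(B)\|=\|B\|$; the triangle inequality then gives $\|\varphi(X)\|\leq\|\varphi(A)\|+\|\varphi(B)\|\leq 2\|X\|$. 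Hence $\varphi|_{\FC}$ is a bounded linear map.

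Next I would truncate. Let $\{e_i\}$ be the \onb{} with $Ce_i=e_i$ and let $P_n$ be the orthogonal projection onto $\vee\{e_1,\dots,e_n\}$. Since $CP_nC=P_n$, the compression $P_nXP_n$ is again $C$-symmetric and finite-rank, and its matrix entries $\la P_nXP_n e_i,e_j\ra$ vanish once $i+j\geq 2n+1$, so $P_nXP_n$ belongs to the subspace $\Theta$ of Proposition \ref{P:unitImple}. Because $X$ is finite-rank and $P_n\to I$ strongly, $\|P_nXP_n-X\|\to0$. Proposition \ref{P:unitImple} gives $\varphi(P_nXP_n)=U(P_nXP_n)U^*$, and letting $n\to\infty$ --- using boundedness of $\varphi$ on the left and the isometry of conjugation by $U$ on the right --- yields $\varphi(X)=UXU^*$. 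As $U$ is unitary, this gives $\|\varphi(X)\|=\|X\|$, as desired.

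The main obstacle is precisely this passage from $\Theta$ to $\FC$: Proposition \ref{P:unitImple} identifies $\varphi$ with conjugation by $U$ only on the finitely supported operators, and a Jordan automorphism is not a priori norm-continuous, so density of $\Theta$ in $\FC$ by itself is insufficient. The decisive input is Lemma \ref{L:JIBasic}(v), which supplies exactly enough continuity (boundedness via the selfadjoint real and imaginary parts) to drive the approximation; the remaining steps are routine.
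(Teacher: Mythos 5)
Your proof is correct, but it takes a genuinely different route from the paper's. The paper's proof of this corollary involves no approximation at all: given $X\in\FC$, it sets $M=\overline{\ran X+\ran X^*}$, which is finite-dimensional and reduces both $C$ and $X$, and then \emph{re-chooses} the orthonormal basis $\{e_i\}$ (with $Ce_i=e_i$) so that the first $\dim M$ vectors span $M$. With respect to this adapted basis, $X$ itself lies in $\Theta$, so Proposition \ref{P:unitImple} applied to that basis gives $\varphi(X)=UXU^*$ directly; the unitary $U$ depends on $X$ through the basis, but that is harmless for a norm statement. Your argument instead keeps the basis fixed and propagates the identity from $\Theta$ to $\FC$ by truncation, supplying the needed continuity via Lemma \ref{L:JIBasic}(v) and the Cartesian decomposition $X=A+\mathrm{i}B$ (the resulting bound $\|\varphi(X)\|\le 2\|X\|$ on $\FC$ suffices). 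What your approach buys is strictly more than the corollary: you obtain $\varphi(X)=UXU^*$ for a \emph{single fixed} unitary $U$ on all of $\FC$, which is exactly the Claim appearing later in the paper's proof of Theorem \ref{T:JordanIso} --- there the paper runs the same truncation argument $R_nXR_n\to X$, but uses this corollary (isometry on $\FC$) as the continuity input, whereas you use the cruder a priori bound. So your proof in effect merges the corollary with that step of the main theorem; the paper's version is the more economical proof of the corollary itself, while yours is self-contained modulo Lemma \ref{L:JIBasic} and Proposition \ref{P:unitImple} and anticipates the structure of the full classification. Your observation that $\Theta$ is a proper subspace of $\FC$ (e.g.\ $v\otimes v$ with $Cv=v$ and all coordinates of $v$ nonzero) correctly identifies why some additional idea --- either re-choosing the basis or an approximation with continuity --- is unavoidable.
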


\begin{proof}
Choose an $X\in\FC$. Set $M=\overline{\ran X+\ran X^*}$. Then $M$ reduces both $C$ and $X$.
Assume that $n=\dim M$. Then we can choose an \onb~ $\{e_i\}_{i\geq 1}$ of $\cH$ such that $M=\vee\{e_i: 1\leq i\leq n\}$ and $Ce_i=e_i$ for all $i\geq 1$.

By Proposition \ref{P:unitImple}, there exists a unitary operator $U\in\BH$ such that
$\varphi(X)=UXU^*$ for all $X\in\Theta$, where
$$\Theta=\{Y\in\SC: \exists n\geq 1 ~\textup{such that}~ \la Ye_i,e_j\ra= 0~ \textup{whenever}~ i+j\geq n\}.$$
Clearly, $X\in\Theta$. Thus $\|\varphi(X)\|=\|X\|$. This completes the proof.
\end{proof}

\begin{proof}[Proof of Theorem \ref{T:JordanIso}]
First we choose an \onb~ $\{e_i\}_{i\geq 1}$ of $\cH$ such that $Ce_i=e_i$ for all $i\geq 1$.

By Proposition \ref{P:unitImple}, there exists a unitary operator $U\in\BH$ such that
$\varphi(X)=UXU^*$ for all $X\in\Theta$, where
$$\Theta=\{Y\in\SC: \exists n\geq 1 ~\textup{such that}~ \la Ye_i,e_j\ra= 0~ \textup{whenever}~ i+j\geq n\}.$$

{\it Claim.} $\varphi(X)=UXU^*$ for $X\in\FC$.

Arbitrarily choose an $X\in\FC$. For each $n\geq 1$, denote by $R_n$ the projection of $\cH$ onto $\vee\{e_i: 1\leq i\leq n\}$. Then $R_nXR_n\in\Theta$ and $R_nXR_n\rightarrow X$ in norm.
Since $\varphi|_{\FC}$ is isometric, it follows that \begin{align*}
\varphi(X)=\lim_n\varphi(R_nXR_n)
=\lim_nU(R_nXR_n)U^*
=UXU^*.
\end{align*} This proves the claim.

For $i\geq 1$, denote $g_i=Ue_i$ and $Q_i=g_i\otimes g_i$. Then $\{g_i\}$ is an \onb~ of $\cH$ and
$\varphi(P_i)=UP_iU^*=U(e_i\otimes e_i)U^*=Q_i$.

Now we shall prove that $\varphi(Y)=UYU^*$ for $Y\in\SC$.

%For $i\geq 1$, set $P_i=e_i\otimes e_i$. By the proof of Proposition \ref{P:unitImple}, there exists an \onb~ $\{g_i\}$ of $\cH$ such that $g_i\otimes g_i=\varphi(P_i)$, $i\geq 1$.
%Denote $Q_i=g_i\otimes g_i$, $i\geq 1$. Then $Q_i=\varphi(P_i)=UP_iU^*$, $i\geq 1$. .

Fix an operator $Y\in\SC$.
For each $i\geq 1$, since $Y\circ P_i\in\FC$, it follows that
\begin{align*}
\varphi(Y)\circ Q_{i}&=\varphi(Y)\circ \varphi(P_{i}) =\varphi(Y\circ P_{i})\\
&=U(Y\circ P_{i})U^* =(UYU^*)\circ (UP_{i}U^*)=(UYU^*)\circ Q_{i}.
\end{align*}
Then $\la \varphi(Y) g_i, g_j\ra=\la UYU^* g_i, g_j\ra$ for all $i,j$ with $i\ne j$.
On the other hand, for each $i\geq 1$,
note that
\begin{align*}
Q_{i} \varphi(Y) Q_{i}& =2Q_i \circ \big(Q_i\circ \varphi(Y)\big)-Q_{i}^2 \circ \varphi(Y) \\
&=2\varphi(P_i) \circ \Big(\varphi(P_i)\circ \varphi(Y)\Big)-\varphi(P_{i}^2) \circ \varphi(Y)\\
&=\varphi\Big(2 P_i \circ \big(P_i\circ Y\big)-P_{i}^2 \circ Y\Big)\\
&=U\Big(2 P_i \circ \big(P_i\circ Y\big)-P_{i}^2 \circ Y\Big) U^*\\
&=U( P_i  YP_i) U^*=Q_i(UY U^*)Q_i,
\end{align*} which implies that $\la \varphi(Y)g_i,g_i\ra=\la UYU^* g_i, g_i\ra$. Therefore we conclude that
$ \varphi(Y)= UYU^* $.

By Lemma \ref{L:unitaryImple}, there exists unimodular $\alpha\in\bC$ such that $(\alpha U)C=C(\alpha U)$.
Set $V=\alpha U$. Then $V$ is unitary and one can see that $\varphi(Y)=VYV^*$ for all $Y\in\SC$.
\end{proof}

%%%%%%%%%%%%%%%%%%%%%%%%%%%%%%%%%%%%%%%%
\section{Jordan multiplication operators}\label{S:JordanMult}

The Jordan product $\circ$ naturally induces a class of multiplication operators on $\SC$.
For $T\in\SC$, define $J_T \in\B(\SC)$ as $J_T:X \longmapsto T\circ X.$ Thus $J_T$ is a bounded linear operator on $\SC$.

For $T\in\SC$, $J_T$ is closely related to the Rosenblum operator induced by $T$.
For $A,B\in\BH$, the Rosenblum operator $\tau_{A,B}$ on $\BH$ is defined by
\[\tau_{A,B}(X)=AX-XB,\ \ \ \    \forall X\in\BH.\]
Thus $J_T$ is the restriction of the Rosenblum operator $\frac{1}{2}\tau_{T,-T}$ to $\SC$.
Rosenblum operators, which arose in the study of operator equations, were first systematically studied by M. Rosenblum in \cite{Rosenblum}.

We wish to determine the spectrum of $J_T$ and its different parts for $T\in\SC$, since the spectrum of a Rosenblum operator has been clearly described (see \cite[Chapter 4]{Herr89} or \cite{Lumer,Rosenblum}).

Let $A$ be a bounded linear operator acting on some Banach space. Denote by $\ker A$ and $\ran A$ the kernel of $A$ and the range of $A$ respectively.
We let $\sigma_p(A)$, $\sigma_\pi(A)$ and $\sigma_\delta(A)$ denote respectively the point spectrum of $A$, the approximate point spectrum of $A$ and the approximate defect spectrum of $A$. Thus
$$\sigma_\pi(A)=\{z\in\bC: A-z ~\textup{is not bounded below}\}$$
and
$$\sigma_\delta(A)=\{z\in\bC: A-z ~\textup{is not surjective}\}.$$
We let $\sigma_l(A)$ and $\sigma_r(A)$ denote respectively the left spectrum of $A$ and the right spectrum of $A$.
That is, $$\sigma_l(A)=\{z\in\bC: A-z ~\textup{does not have a left inverse}\}$$
and
$$\sigma_r(A)=\{z\in\bC: A-z ~\textup{does not have a right inverse}\}.$$

The main result of this section is the following theorem.

\begin{theorem}\label{T:JMultiply}
Let $T\in\SC$. Then
\begin{enumerate}
\item[(i)] $\|J_T\|=\|T\|$;
\item[(ii)] $\sigma(J_T)=\sigma_r(J_T)=\sigma_\delta(J_T)=\sigma_l(J_T)=\sigma_\pi(J_T)=\frac{1}{2}[\sigma(T)+\sigma(T)]$.
\end{enumerate}
\end{theorem}

To give the proof of Theorem \ref{T:JMultiply}, we first make some preparation.

For the reader's convenience, we write down some elementary facts.

\begin{lemma}\label{L:computation}
Let $e,f\in\cH$ and $X=e\otimes f$. If $A\in\BH$ and $C$ is a conjugation on $\cH$, then
\begin{enumerate}
\item[(i)]$AX= (Ae)\otimes f$,
\item[(ii)] $X A=e\otimes (A^*f)$, and
\item[(iii)] $CXC= (Ce) \otimes (Cf)$.
\end{enumerate}
\end{lemma}

\begin{lemma}\label{L:CSeg}
Let $e,f\in\cH$ with $\|e\|=\|f\|=1$. Set $X=e\otimes f + (Cf) \otimes (Ce)$.
Then $X\in\SC$ and $1\leq \|X\|\leq \|X\|_p\leq 2$ for all $p\in[1,\infty)$.
\end{lemma}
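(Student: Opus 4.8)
The statement has two independent assertions: the membership $X \in \SC$ and the chain of norm estimates $1 \leq \|X\| \leq \|X\|_p \leq 2$. The plan is to treat each by direct computation, relying throughout on the rank-one calculus recorded in Lemma \ref{L:computation}.

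First I would verify $X \in \SC$, that is, $CXC = X^*$. Applying Lemma \ref{L:computation}(iii) to each summand together with $C^2 = I$ gives
\[ CXC = (Ce)\otimes(Cf) + f \otimes e, \]
while the standard identity $(a\otimes b)^* = b \otimes a$ gives
\[ X^* = f \otimes e + (Ce)\otimes(Cf). \]
These two expressions agree, so $X \in \SC$.

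For the upper estimates I would observe that each summand of $X$ is a rank-one operator, and a rank-one operator $a \otimes b$ has a single nonzero singular value $\|a\|\,\|b\|$; hence all its Schatten norms, as well as its operator norm, equal $\|a\|\,\|b\|$. Since $C$ preserves norms (because $\la Cx, Cx\ra = \la x, x\ra$ forces $\|Cx\| = \|x\|$), both $e \otimes f$ and $(Cf)\otimes(Ce)$ have $p$-norm $1$, so the triangle inequality yields $\|X\|_p \leq 2$ for every $p \in [1,\infty)$. The bound $\|X\| \leq \|X\|_p$ is then the standard comparison of symmetric norms: the operator norm is the largest singular value, which is dominated by the $\ell^p$-sum of all singular values.

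The crux is the lower bound $\|X\| \geq 1$, for which I would test $X$ against the unit vector $f$. A short computation gives $Xf = e + \la f, Ce\ra\, Cf$, and pairing with $e$ produces
\[ \la Xf, e\ra = 1 + \la f, Ce\ra\, \la Cf, e\ra = 1 + |\la Ce, f\ra|^2 \geq 1, \]
where I use the conjugation identity $\la Cf, e\ra = \la Ce, f\ra$ to combine the two factors into $|\la Ce, f\ra|^2$. By Cauchy--Schwarz and $\|e\| = \|f\| = 1$, one gets $1 \leq |\la Xf, e\ra| \leq \|Xf\|\,\|e\| \leq \|X\|$. No step presents a genuine obstacle; the only point demanding care is the correct bookkeeping of the conjugate-linearity of $C$ when collapsing the inner products in the last display.
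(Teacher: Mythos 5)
Your proof is correct and takes essentially the same approach as the paper: the membership $X\in\SC$ and the bound $\|X\|_p\leq 2$ are the routine verifications the paper dismisses as ``easy to check,'' and your lower bound is exactly the paper's computation $\|X\|\geq |\la Xf,e\ra| = 1+|\la Ce,f\ra|^2$, including the key use of the conjugation identity $\la Cf,e\ra=\la Ce,f\ra$. The only (harmless) addition is that you spell out the intermediate comparison $\|X\|\leq\|X\|_p$ via singular values, which the paper leaves implicit.
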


\begin{proof}
It is easy to check that $CXC=X^*$ and $\|X\|_p\leq 2$.

On the other hand, compute to see
\begin{align*}
  \|X\|&\geq |\la Xf, e \ra|=|1+ \la f,Ce\ra\cdot \la Cf,e\ra |\\
  &=|1+ \la f,Ce\ra\cdot \la Ce,f\ra |=1+ |\la Ce,f\ra |^2.
\end{align*}
It follows that $\|X\|\geq 1$, which completes the proof.
\end{proof}

\begin{lemma}\label{L:Rosen}
Let $A\in\SC$. Then
\begin{enumerate}
\item[(i)] $\sigma_\pi(A)=\sigma_\delta(A)=\sigma(A)$;
\item[(ii)] $\sigma(J_A)\subset \frac{1}{2}[\sigma(A)+\sigma(A)]$.
\end{enumerate}
\end{lemma}

\begin{proof}
(i) For any $\lambda\in\bC$, note that $C(A-\lambda)C=(A-\lambda)^*$.
Then $A-\lambda$ is bounded below if and only if so is $(A-\lambda)^*$, which equals that
$A-\lambda$ is surjective. Hence the result follows readily.

(ii)
Note that $A\circ X\in \SSC$ for all $X\in\SSC$. Thus $\SSC$ and $\SC$ are both invariant under $\tau_{A,-A}$.
Thus, by \cite[Corollary 3.20]{Herr89}, $$\sigma(J_A)=\sigma(\frac{1}{2}\tau_{A,-A}|_{\SC})\subset\sigma(\frac{1}{2}\tau_{A,-A} )=\frac{1}{2}[\sigma(A)+\sigma(A)].$$
\end{proof}

By \cite[Theorem 3.19 \& Corollary 3.20]{Herr89}, the following corollary is clear.

\begin{corollary}\label{C:Rosen}
If $A\in\SC$, then $$\sigma_\pi(\frac{1}{2}\tau_{A,-A})=\sigma_\delta(\frac{1}{2}\tau_{A,-A})=\sigma(\frac{1}{2}\tau_{A,-A})=\frac{1}{2}[\sigma(A)+\sigma(A)].$$
\end{corollary}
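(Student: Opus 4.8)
The plan is to derive the corollary by combining the classical spectral formulas for Rosenblum operators with the spectral coincidence for $C$-symmetric operators recorded in Lemma \ref{L:Rosen}(i). Throughout, $\frac{1}{2}\tau_{A,-A}$ is regarded as an operator on all of $\BH$, with $\frac{1}{2}\tau_{A,-A}(X) = \frac{1}{2}(AX + XA) = A\circ X$. Since the approximate point spectrum and the approximate defect spectrum are always contained in the full spectrum, it suffices to compute each of the three spectra and verify that they all equal $\frac{1}{2}[\sigma(A) + \sigma(A)]$.

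First I would record the full-spectrum formula. By \cite[Cor. 3.20]{Herr89}, $\sigma(\tau_{A,-A}) = \sigma(A) - \sigma(-A) = \sigma(A) + \sigma(A)$; this was already used in the proof of Lemma \ref{L:Rosen}(ii), and rescaling gives $\sigma(\frac{1}{2}\tau_{A,-A}) = \frac{1}{2}[\sigma(A)+\sigma(A)]$. Next I would invoke the finer formulas of \cite[Thm. 3.19]{Herr89}, which express $\sigma_\pi(\tau_{A,B})$ and $\sigma_\delta(\tau_{A,B})$ as appropriate algebraic combinations of the one-variable spectra $\sigma_\pi$, $\sigma_\delta$, $\sigma$ of $A$ and of $B$. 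Specializing to $B = -A$ and using the elementary identities $\sigma_\pi(-A) = -\sigma_\pi(A)$, $\sigma_\delta(-A) = -\sigma_\delta(A)$, $\sigma(-A) = -\sigma(A)$, I would reduce the right-hand sides to sums of spectra of $A$ alone.

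The decisive step is then Lemma \ref{L:Rosen}(i): because $A \in \SC$ satisfies $C(A - \lambda)C = (A - \lambda)^*$, we have $\sigma_\pi(A) = \sigma_\delta(A) = \sigma(A)$. Consequently every combination of $\sigma_\pi(A)$ and $\sigma_\delta(A)$ arising from \cite[Thm. 3.19]{Herr89} collapses to the single set $\sigma(A) + \sigma(A)$, independently of the precise pairing prescribed by the cited theorem. Thus
\[
\sigma_\pi(\tau_{A,-A}) = \sigma_\delta(\tau_{A,-A}) = \sigma(A) + \sigma(A),
\]
and dividing by $2$ completes the chain of equalities.

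Accordingly there is no genuine difficulty; the only point requiring attention is the bookkeeping of signs when passing from the variable $B$ to $-A$ in the cited formulas. It is worth stressing that the $C$-symmetry of $A$ is precisely what trivializes this bookkeeping: for a general operator the spectra $\sigma_\pi(\tau_{A,-A})$, $\sigma_\delta(\tau_{A,-A})$, $\sigma_l(\tau_{A,-A})$ and $\sigma_r(\tau_{A,-A})$ can genuinely differ, but Lemma \ref{L:Rosen}(i) forces all of them to coincide once $A \in \SC$.
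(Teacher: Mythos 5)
Your proposal is correct and follows exactly the route the paper intends: its one-line proof cites \cite[Thm. 3.19 \& Cor. 3.20]{Herr89} for the formulas $\sigma_\pi(\tau_{A,B})$, $\sigma_\delta(\tau_{A,B})$, $\sigma(\tau_{A,B})$ in terms of the one-variable spectra, which collapse to $\sigma(A)+\sigma(A)$ precisely because Lemma \ref{L:Rosen}(i) gives $\sigma_\pi(A)=\sigma_\delta(A)=\sigma(A)$ for $A\in\SC$. Your observation that the exact pairing in the cited theorem is immaterial once all one-variable spectra coincide is the same point the paper leaves implicit.
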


Note that each $\SCP$ is invariant under $J_T$ for $p\in\{0\}\cup [1,\infty)$. Recall that $\SCP=\SC\cap\BPH$. Denote $J_{T,p}=J_T|_{\SCP}$. We view $J_{T,p}$ as a linear operator on $(\SCP, \|\cdot\|_p)$. By \cite[Theorem 2.3.10]{Ringrose}, $J_{T,p}$ is bounded.

\begin{lemma}\label{L:defect}
If $T\in\SC$, then $ \sigma_\pi(J_{T,p})\subset\frac{1}{2}[\sigma(T)+\sigma(T)]$ for all $p\in\{0\}\cup [1,\infty)$.
\end{lemma}

\begin{proof}
Assume that $z\in \sigma_\pi(J_{T,p})$. Then there exist $\{X_n\}\in \SCP$ with $\|X_n\|_p=1$ for all $n$ such that
$ \|J_{T,p}X_n-zX_n\|_p\rightarrow 0.$
That is, $\|\frac{1}{2}\tau_{T,-T} (X_n)-zX_n\|_p\rightarrow 0$. Thus $\frac{1}{2}\tau_{T,-T}|_{\BPH}-z$ is not bounded below.
By \cite[Theorem 3.54]{Herr89},  we deduce that $\frac{1}{2}\tau_{T,-T} -z$ is not bounded below. In view of Corollary \ref{C:Rosen}, we
have $z\in\frac{1}{2}[\sigma(T)+\sigma(T)]$.
\end{proof}

Given a Banach space $\mathcal{X}$, we let $\mathcal{X}'$ denote its dual. If $T:\mathcal{X}\rightarrow \mathcal{X}$ is a bounded linear operator, we denote by $T'$ the adjoint of $T$ acting on $\mathcal{X}'$.

\begin{lemma}\label{L:dual}
If $T\in\SC$, then $\sigma_\pi(J_T)=\sigma_\delta(J_{T,1})$ and $\sigma_\delta(J_T)=\sigma_\pi(J_{T,1})$.
\end{lemma}

\begin{proof}
It suffices to prove that $J_T$ is similar to the adjoint $J_{T,1}'$ of $J_{T,1}$.

Denote by $\phi$ the isometrical isomorphism of $\SC$ onto $(\SCY)'$ defined by $\phi(K)=\phi_K$,
where \[\phi_K(X)=\tr(XK), \ \ \forall X\in \SCY. \]
Then it suffices to check that $\phi J_T =J_{T,1}' \phi$.

Fix $Z\in \SC$. Denote $J_1=[\phi J_T] (Z)$ and $J_2=[J_{T,1}' \phi] (Z)$. Then $J_i\in \SCY'$, $i=1,2$.
It suffices to prove that $J_1=J_2$.
Since $$[\phi J_T] (Z) =\phi[J_T(Z)]=\phi_{T\circ Z},\ \ \ [J_{T,1}' \phi] (Z) =J_{T,1}'[\phi(Z)]=J_{T,1}'(\phi_Z),$$ for any $X\in\SCY$, we have
\begin{align*}
J_1(X)&=\phi_{T\circ Z}(X)=\tr[(T\circ Z)X]\\
&=\frac{1}{2}\tr(TZX+ZTX) =\frac{1}{2}\tr(TXZ+XTZ)\\
&=\tr[(T\circ X) Z] =\phi_Z(T\circ X)\\
&=\phi_Z[J_{T,1}(X)]=J_2(X).
\end{align*} This shows that $J_1=J_2$.
\end{proof}

Using dual relations among $\SCP (p\in\{0\}\cup[1,\infty))$ (see Proposition \ref{P:SchattenP}), one can prove as in Lemma \ref{L:dual} the following corollary.

\begin{corollary}\label{C:dual}
Let $T\in\SC$ and $1<p,q<\infty$ with $\frac{1}{p}+\frac{1}{q}=1$.
Then
\begin{enumerate}
\item[(i)] $\sigma_\pi(J_{T,0})=\sigma_\delta(J_{T,1})$ and $\sigma_\delta(J_{T,0})=\sigma_\pi(J_{T,1})$.
\item[(ii)] $\sigma_\pi(J_{T,p})=\sigma_\delta(J_{T,q})$ and $\sigma_\delta(J_{T,p})=\sigma_\pi(J_{T,q})$.
\end{enumerate}
\end{corollary}

Now we are going to prove Theorem \ref{T:JMultiply}.

\begin{proof}[Proof of Theorem \ref{T:JMultiply}]
(i) Clearly, $\|T\circ X\|\leq \|T\|\cdot\|X\|$ for all $X\in\SC $. Thus $ \|J_T\|\leq\|T\|$. Noting that $I\in\SC$,  we have $\|T\|=\|J_T(I)\|\leq\|J_T\|$.
Hence $\|J_T\| =\|T\|$.

(ii) Fix $p\in\{0\}\cup[1,\infty)$. We first prove a key claim.

{\it Claim.} $\frac{1}{2}[\sigma(T)+\sigma(T)]\subset [\sigma_\pi(J_{T}) \cap \sigma_\pi(J_{T,p})]$.

Let $\lambda,\mu\in\sigma(T) $. Then $\lambda\in\sigma_\pi(T)$ and, by Lemma \ref{L:Rosen} (i), $ \bar{\mu} \in\sigma_\pi(T^*)$.
We can choose unit vectors $\{e_n,f_n: n\geq 1\}$ such that $(T-\lambda)e_n\rightarrow 0$ and  $(T-\mu)^*f_n\rightarrow 0$ as $n\rightarrow\infty$.

For $n\geq 1$, set $X_n=e_n\otimes f_n+ (Cf_n)\otimes (Ce_n)$.  By Lemma \ref {L:CSeg}, $X_n\in\SC$ and $1\leq \|X_n\|\leq \|X_n\|_p\leq 2$.
By Lemma \ref{L:computation},
\begin{align*}
2J_T(X_n)=TX_n+X_nT&=(Te_n)\otimes f_n+ (TCf_n)\otimes (Ce_n)\\
&~~~~+ e_n\otimes (T^*f_n)+ (Cf_n)\otimes (T^*Ce_n)\\
&=(Te_n)\otimes f_n+ (C T^*f_n)\otimes (Ce_n)\\
&~~~~+ e_n\otimes (T^*f_n)+ (Cf_n)\otimes (C Te_n).
\end{align*}
Note that
$$\lambda X_n=(\lambda e_n)\otimes f_n+ (Cf_n)\otimes [C(\lambda e_n])$$ and
$$\mu X_n=e_n \otimes (\bar{\mu} f_n)+[C(\bar{\mu}f_n)]\otimes (Ce_n).$$
Then, as $n\rightarrow \infty$,
\begin{align*}
2J_T(X_n)-(\lambda+\mu)X_n&=  [(Te_n)\otimes f_n+ (Cf_n)\otimes (C Te_n)-\lambda X_n]\\
 &~~~+ [(C T^*f_n)\otimes (Ce_n)+ e_n\otimes (T^*f_n)-\mu X_n] \\
&= [(T-\lambda) e_n]\otimes f_n+ (Cf_n)\otimes [C (T - \lambda) e_n ] \\
&~~~+ [C (T^*-\bar{\mu})f_n]\otimes (Ce_n)+ e_n\otimes [(T^*-\bar{\mu})f_n ]\overset{\|\cdot\|_p}{\longrightarrow} 0.
\end{align*}  %Also, we have $\|2J_T(X_n)-(\lambda+\mu)X_n\|\rightarrow 0$.
This shows that $\frac{1}{2}(\lambda+\mu)\in\sigma_\pi(J_{T,p}) \cap \sigma_\pi(J_{T}).$

Since $\lambda,\mu\in\sigma(T) $ can be choose arbitrarily, we deduce that $\frac{1}{2}(\sigma(T)+\sigma(T))\subset\sigma_\pi(J_{T,p}) \cap \sigma_\pi(J_{T})$.
This proves Claim.
%
%Note that $J_{T,1}=J_T|_{\SCY}$. It follows that $\sigma_\pi(J_{T,1})\subset\sigma_\pi(J_T)$.

In view of Lemma \ref{L:Rosen} (ii) and Lemma \ref{L:defect}, we conclude that $$\sigma(J_T)=\sigma_\pi(J_T)=\sigma_\pi(J_{T,p})=\frac{1}{2}[\sigma(T)+\sigma(T)].$$
It follows immediately from Lemma \ref{L:dual} and Corollary \ref{C:dual} that
$$\sigma_\delta(J_T)=\sigma_\delta(J_{T,p})=\frac{1}{2}[\sigma(T)+\sigma(T)].$$

For any bounded linear operator $A$ on a Banach space, note that
$$\sigma_\delta(A)\subset \sigma_r(A)\subset\sigma(A),\quad \sigma_\pi(A)\subset \sigma_l(A)\subset\sigma(A).$$
Hence we conclude the proof.
\end{proof}

\begin{corollary}\label{C:ideal}
Let $T\in\SC$ and $p\in\{0\}\cup [1,\infty)$. Then
 $$\sigma(J_{T,p})=\sigma_\delta(J_{T,p})=\sigma_\pi(J_{T,p})=\frac{1}{2}[\sigma(T)+\sigma(T)].$$
\end{corollary}
%
%\begin{proof}
%It can be seen from the proof of Theorem \ref{T:JMultiply} (ii) that
%\[\frac{1}{2}[\sigma(T)+\sigma(T)]\subset \sigma_\pi(J_{T,p}).\]
%Since $$\sigma_\pi(J_{T,p})\subset \sigma_\pi(J_{T})= \frac{1}{2}[\sigma(T)+\sigma(T)],$$ we deduce that
%$\sigma_\pi(J_{T,p})=\frac{1}{2}[\sigma(T)+\sigma(T)].$
%Using Corollary \ref{C:dual}, one can see the desired equality.
%\end{proof}

\begin{remark}
By Theorem \ref{T:JMultiply}, Corollary \ref{C:Rosen} and Corollary \ref{C:ideal}, if $T\in\SC$, then
the spectra, the approximate point spectra and the approximate defect spectra of $J_T$ and $J_{T,p}$ $(p\in\{0\}\cup[1,\infty))$ coincide with that of $\frac{1}{2}\tau_{T,-T}$, all equaling $\frac{1}{2}[\sigma(T)+\sigma(T)]$.
\end{remark}

\begin{corollary}\label{C:equation}
Let $T\in\SC$ and $p\in\{0\}\cup [1,\infty)$. Then the following are equivalent:
\begin{enumerate}
\item[(i)] for any $Y\in \BH$, the operator equation $TX+XT=Y$ has at least one solution in $\BH$;
\item[(ii)] for any $Y\in \BH$, the operator equation $TX+XT=Y$ has exactly one solution in $\BH$;
\item[(iii)] for any $Y\in \SC$, the operator equation $TX+XT=Y$ has at least one solution in $\SC$;
\item[(iv)]  for any $Y\in \SC$, the operator equation $TX+XT=Y$ has exactly one solution in $\SC$;
\item[(iii)] for any $Y\in \SCP$, the operator equation $TX+XT=Y$ has at least one solution in $\SCP$;
\item[(iv)]  for any $Y\in \SCP$, the operator equation $TX+XT=Y$ has exactly one solution in $\SCP$;
\item[(v)] $0\notin\sigma(T)+\sigma(T)$.
\end{enumerate}
\end{corollary}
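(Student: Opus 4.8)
The plan is to recognize that each solvability assertion is really a statement about the surjectivity or bijectivity of a suitable multiplication operator, and then to read everything off from the spectral identities already established. First I would observe that $L_T(X)=T\circ X=\frac{1}{2}(TX+XT)$, so the equation $TX+XT=Y$ is the same as $L_T(X)=\frac{1}{2}Y$; on $\BH$ it is $\frac{1}{2}\tau_{T,-T}(X)=\frac{1}{2}Y$. Since $Y\mapsto\frac{1}{2}Y$ is a linear bijection of each of $\BH$, $\SC$ and $\SCP$, the phrase ``has at least one solution for every $Y$'' translates exactly to surjectivity of the relevant operator, while ``has exactly one solution for every $Y$'' translates to bijectivity, the bounded inverse then being automatic by the open mapping theorem.

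Next I would invoke the three spectral computations. On $\BH$, Corollary \ref{C:Rosen} gives $\sigma_\delta(\frac{1}{2}\tau_{T,-T})=\sigma(\frac{1}{2}\tau_{T,-T})=\frac{1}{2}[\sigma(T)+\sigma(T)]$; on $\SC$, Theorem \ref{T:JMultiply} gives $\sigma_\delta(L_T)=\sigma(L_T)=\frac{1}{2}[\sigma(T)+\sigma(T)]$; and on $\SCP$, Corollary \ref{C:ideal} gives $\sigma_\delta(L_{T,p})=\sigma(L_{T,p})=\frac{1}{2}[\sigma(T)+\sigma(T)]$. The crucial point is that in every case the approximate defect spectrum coincides with the full spectrum, so $0$ lies in one precisely when it lies in the other. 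Consequently surjectivity is equivalent to invertibility for each of these operators, which is exactly why the ``at least one'' and ``exactly one'' formulations collapse together in each ambient space.

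Putting these together, each of the listed solvability conditions holds if and only if $0$ fails to lie in the corresponding spectrum, that is, if and only if $0\notin\frac{1}{2}[\sigma(T)+\sigma(T)]$, which is the same as $0\notin\sigma(T)+\sigma(T)$, namely condition (v). This yields all the equivalences simultaneously. I do not anticipate a genuine obstacle here: the entire content is carried by the earlier spectral identities, and the only care needed is bookkeeping, keeping track of the factor $\frac{1}{2}$ relating $L_T$ to $\tau_{T,-T}$, noting that $\frac{1}{2}Y$ ranges over the same space as $Y$, and recording that bijectivity of a bounded operator between Banach spaces automatically furnishes a bounded inverse.
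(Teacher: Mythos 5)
Your proposal is correct and is essentially the paper's intended argument: the corollary is stated without proof precisely because it follows, as you show, from the coincidence of the spectrum with the surjectivity (defect) spectrum in Corollary \ref{C:Rosen} (for $\frac{1}{2}\tau_{T,-T}$ on $\mathcal{B(H)}$), Theorem \ref{T:JMultiply} (for $L_T$ on $\mathcal{S}_C$), and Corollary \ref{C:ideal} (for $L_{T,p}$ on $\mathcal{S}_{C,p}$), all equal to $\frac{1}{2}[\sigma(T)+\sigma(T)]$. Your bookkeeping of the factor $\frac{1}{2}$, the translation of existence/uniqueness into surjectivity/bijectivity, and the appeal to the open mapping theorem are exactly the details the paper leaves to the reader.
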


%\begin{lemma}\label{L:Rosenblum}
%If $T\in\SC$, then
% $$\taJ_{T,-T}(\SC)\subset\SC,\ \ \taJ_{T,-T}(\SSC)\subset\SSC, \ \  J_T=\frac{1}{2}\taJ_{T,-T}|_{\SC}.$$
%\end{lemma}
%

%
%(\cite[page 48]{Herr89})

%
%$$w(T)\leq\|J_T\|\leq \frac{1}{2}\cdot\min\{\|T-\lambda\|+\|T+\lambda\|:\lambda\in\bC\}\leq \|T\|,$$ where $w(T):=\max\{|\la Tx,x\ra|: x\in\cH ~\textup{with}~ \|x\|=1\}$.
%\end{proposition}

%
%\begin{proof}
%We only prove $w(T)\leq\|J_T\|$.
%
%
%
%Choose a unit vector $x\in\cH$. Denote $X=x\otimes (Cx)$. Thus $X\in\SC$ and $\|X\|=1$.
%Compute to see $\la TXCx, x\ra=\la Tx,x\ra$ and
%\[\la XTCx, x\ra=\la TCx, Cx\ra =\la x, CTCx\ra=\la x, T^*x\ra=\la Tx,x\ra.\]
%Hence
%\begin{align*}
%\|J_T\|&\geq \|J_T(X)\|\\
%&=\frac{1}{2}\|TX+XT\|\\
%&\geq \frac{1}{2}|\la (TX+XT)Cx, x\ra|\\
%&=|\la Tx,x\ra|.
%\end{align*}
%Note that $x$ can be any unit vector in $\cH$. So we deduce that $\|J_T\|\geq w(T)$.
%\end{proof}

%
%\begin{proof}
%A straightforward verification.
%\end{proof}

\begin{proposition}\label{L:RosenSpec}
If $T\in\SC$, then $\frac{1}{2}[\sigma_p(T)+\sigma_p(T)]\subset\sigma_p(J_T).$
\end{proposition}

\begin{proof}
Now choose $\lambda,\mu\in\sigma_p(T)$. It suffices to prove that $\frac{1}{2}(\lambda+\mu)\in\sigma_p(J_T)$.

Since $\lambda\in\sigma_p(T)$, we can find a unit vector $e\in\cH$ such that $Te=\lambda e$.
On the other hand, note that $C(T-\mu)^*C=T-\mu$. Since $\mu\in\sigma_p(T)$, it follows that  ${\bar \mu}\in\sigma_p(T^*)$ and we can find a unit vector $f\in\cH$ such that $T^*f=\bar{\mu }f$.

Set $X=e\otimes f+ (Cf)\otimes (Ce)$.
Then $X\in\SC$ and, by Lemma \ref{L:CSeg}, $X\ne 0$.
Moreover, we have \begin{align*}
 2 J_T(X)&=TX+XT\\
  &=(Te)\otimes f+(T Cf)\otimes (Ce)+ e\otimes (T^*f)+  (Cf)\otimes (T^*Ce)\\
  &=(\lambda e)\otimes f+(CT^*f)\otimes (Ce)+ e\otimes (\bar{\mu} f)+  (Cf)\otimes (CTe)\\
  &=\lambda( e\otimes f)+(C\bar{\mu}  f)\otimes (Ce)+ \mu( e\otimes  f) +  (Cf)\otimes (C\lambda e)\\
  &=\lambda( e\otimes f)+\mu [(C  f)\otimes (Ce)]+ \mu( e\otimes  f) +  \lambda[(Cf)\otimes (C e)]\\
  &=(\lambda+\mu) X.
\end{align*} Hence $\frac{1}{2}(\lambda+\mu)\in\sigma_p(J_T)$.
\end{proof}

By Theorem \ref{T:JMultiply} and Proposition \ref{L:RosenSpec}, the invertibility of an operator $T\in\SC$ in general does not imply the invertibility or even the injectivity of $J_T$.

\begin{example}
Let $C$ be a conjugation on $\cH$ and
\[D=\begin{bmatrix}
C&0\\
0 & C
\end{bmatrix}\begin{matrix}
  \cH\\ \cH
\end{matrix}.\]  Then one can check that
\[\cS_D=\left\{ \begin{bmatrix}
A&E^*\\
CEC&B
\end{bmatrix}\begin{matrix}
  \cH\\ \cH
\end{matrix}: A,B\in\SC, E\in\BH \right\}.\] Define $$
T=\begin{bmatrix}
I&0\\
0&-I
\end{bmatrix}\begin{matrix}
  \cH\\ \cH
\end{matrix}.$$
Clearly, $T\in\cS_D$, $\sigma(T)=\{1,-1\}$ and, by Theorem \ref{T:JMultiply}, $\sigma(J_T)=\{0,1,-1\}$.
Moreover, one can check that $\sigma_p(J_T)=\{0,1,-1\}$,
\[\ker J_T=\left\{ \begin{bmatrix}
0&E^*\\
CEC&0
\end{bmatrix}\begin{matrix}
  \cH\\ \cH
\end{matrix}:  E\in\BH \right\}\]
\[\ker (J_T-1)=\left\{ \begin{bmatrix}
A&0\\
0&0
\end{bmatrix}\begin{matrix}
  \cH\\ \cH
\end{matrix}:  A\in\SC \right\} \] and
\[\ker (J_T+1)=\left\{ \begin{bmatrix}
0&0\\
0&B
\end{bmatrix}\begin{matrix}
  \cH\\ \cH
\end{matrix}: B\in\SC \right\}.\]
\end{example}

%%%%%%%%%%%%%%%%%%%%%%%%%%%%%%%%%%%%%%%%%%%5
%
%\section{$C^*$-algebraic aspects}\label{S:CStar}

%%%%%%%%%%%%%%555
\section{Invertible operators in $\SC$}\label{S:invertible}

This section focuses on some topics concerning
Jordan invertible elements of $\SC$.

We remark that an element $T\in\SC$ is Jordan invertible if and only if $T$ is an
invertible operator.
In fact, if $T\in\BH$ is an invertible operator, then it is easy to see that $T^{-1}\in\SC$. One can check that
$Q_T$ is invertible with $Q_T^{-1}=Q_{T^{-1}}$. On the other hand, if $Q_T$ is invertible, then there exists $X\in\SC$ such that
$Q_T(X)=TXT=I$. So $T$ is invertible.

\subsection{Connectedness of the invertibles}
It is well known that the collection of invertible operators in $\BH$ is path connected.
The following result is its analogue in $\SC$.

%The main result of this subsection is the following result, which is an analogue of

\begin{theorem}\label{T:invtbCntd}
The set of invertible operators in $\SC$ is path connected.
\end{theorem}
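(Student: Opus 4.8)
The plan is to prove that the invertible operators in $\SC$ form a path-connected set by showing that any invertible $T\in\SC$ can be joined by a continuous path of invertible $C$-symmetric operators to a fixed reference point, say the identity $I$ (which lies in $\SC$ since $CIC=I=I^*$). The natural first move is to exploit the \emph{polar decomposition} together with the good functional calculus available for normal elements of $\SC$ established in Proposition \ref{P:funcCalcu}. If $T\in\SC$ is invertible, write $T=U|T|$ with $U$ unitary and $|T|=(T^*T)^{1/2}$ positive and invertible. The modulus $|T|$ need \emph{not} lie in $\SC$ (indeed Remark \ref{R:funcCalcu} warns that $|T|\in\SC$ forces $T$ normal), so one cannot simply deform $|T|$ to $I$ inside $\SC$; this is precisely where care is needed.

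A cleaner route is to use the special structure of $C$-symmetric operators directly. Fix an \onb~$\{e_n\}$ with $Ce_n=e_n$, so that $T\in\SC$ exactly when $T$ has a symmetric matrix $[t_{ij}]$ with $t_{ij}=t_{ji}$. The key algebraic fact I would invoke is the \emph{Takagi / complex-symmetric polar form}: any $C$-symmetric $T$ can be written $T=CJ P$ or, more usefully for this problem, every invertible $T\in\SC$ admits a factorization reflecting its symmetry. Concretely, I would try the path
\begin{equation}\label{Eq:homotopy}
T_s = (1-s)\,T + s\,\lambda I,\qquad s\in[0,1],
\end{equation}
for a suitable nonzero scalar $\lambda$, noting that each $T_s$ lies in $\SC$ because $\SC$ is a linear subspace containing $I$. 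The difficulty is that $T_s$ may fail to be invertible for some intermediate $s$, since the straight-line segment from $T$ to $\lambda I$ can cross the singular operators.

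To overcome this, I would argue \textbf{spectrally}. Since $\SC$ is closed under the Jordan product and contains a rich functional calculus for its normal elements, the real obstruction is to move $T$ to a normal (hence unitarily diagonalizable in the $C$-symmetric sense) invertible element while staying invertible and $C$-symmetric. I expect the \emph{main obstacle} to be controlling invertibility along the deformation: one must ensure no eigenvalue crosses the origin. A standard device is to first rotate the spectrum of $T$ away from $0$ by replacing $T$ with $e^{i\theta s}T$ for $s\in[0,1]$ and an appropriate $\theta$ (this stays in $\SC$ and invertible, as scalar multiples of $C$-symmetric operators are $C$-symmetric and invertibility is preserved), and then to deform along a path that keeps the numerical range bounded away from $0$. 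Combining a spectral rotation with the convexity argument of \eqref{Eq:homotopy}, restricted to the half-plane where $T$'s spectrum has been confined, should furnish a path of invertibles. The technical heart will be verifying that the chosen homotopy avoids the singular locus; I anticipate handling this via a compactness argument on the parameter interval together with the density of invertibles (which the authors establish separately), patching together short invertible sub-paths where the straight segment meets an obstruction. Finally, since $\mathcal{U(H)}$ is path-connected and unitaries commuting with $C$ implement Jordan automorphisms of $\SC$ (Theorem \ref{T:JordanIso}), any two normal reference points can be connected, completing the argument.
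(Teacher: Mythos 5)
There is a genuine gap: both of your concrete devices fail, and your fallback is circular. First, the rotation-plus-segment strategy breaks down whenever $\sigma(T)$ surrounds the origin. Take $U\in\SC$ unitary and diagonal with respect to an \onb~fixed by $C$, with eigenvalues dense in the unit circle, so that $\sigma(U)=\mathbb{T}$. Then $\sigma(e^{i\theta}U)=\mathbb{T}$ for every $\theta$, so no rotation ``confines the spectrum to a half-plane,'' and by the spectral mapping theorem the segment $(1-s)e^{i\theta}U+s\lambda I$ has spectrum equal to the circle of radius $1-s$ centered at $s\lambda$, which passes through $0$ at $s=1/(1+|\lambda|)\in(0,1)$: every such segment crosses the singular operators. (Likewise, ``keeping the numerical range bounded away from $0$'' is impossible here, since the numerical range of this $U$ already contains $0$.) Second, the proposed repair --- patching together short invertible sub-paths using the density of invertibles in $\SC$ --- is not a proof: density of a set does not imply its path connectedness (for instance $GL_n(\mathbb{R})$ is dense in $M_n(\mathbb{R})$ but disconnected), and to patch around an obstruction you would already need to know that two nearby invertibles of $\SC$ can be joined \emph{inside} the invertibles of $\SC$, which is exactly the statement being proved.

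The idea you need --- and which you half-mention and then abandon --- is the refined polar decomposition of Garcia and Putinar: an invertible $T\in\SC$ factors as $T=U|T|$ where $U=CJ\in\SC$ is unitary, $J$ is a conjugation, and, crucially, $J|T|=|T|J$. That commutation relation is precisely what lets one deform the positive part while staying in $\SC$: setting $f_\lambda(t)=\lambda+(1-\lambda)t$, each $T_\lambda=Uf_\lambda(|T|)$ is invertible, and
\[
CT_\lambda C=Jf_\lambda(|T|)C=f_\lambda(|T|)JC=f_\lambda(|T|)U^*=T_\lambda^*,
\]
so $T_\lambda\in\SC$; this connects $T$ to $U$ through invertibles in $\SC$, even though $|T|$ itself need not lie in $\SC$. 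One then connects the unitary $U$ to $I$ not by a linear segment but spectrally inside $W^*(U)$: since $U$ is normal and lies in $\SC$, Proposition \ref{P:funcCalcu} gives $W^*(U)\subset\SC$, and the set of unitaries in a von Neumann algebra is path connected. This is how the paper proceeds; your outline never exploits the relation $J|T|=|T|J$, which is what the entire argument rests on.
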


\begin{proof}
Choose an invertible operator $T\in\SC$ and assume that $T=U|T|$ is its polar decomposition. So $U$ is unitary and $|T|$ is invertible.
By \cite[Theorem 2]{Gar07}, $U\in \SC$ and there exists a conjugation $J$ on $\cH$ such that $U=CJ$ and $J|T|=|T|J$.

{\it Claim.} There is an arc $\{ T_\lambda: \lambda\in[0,1]\}$ in $\SC$ connecting $T$ to $U$.

Denote $m=\min\sigma(|T|)$ and $M=\max\sigma(|T|)$.
For $\lambda\in[0,1]$, define $f_\lambda\in C[m,M]$ as $f_\lambda(t)=\lambda+(1-\lambda)t$. Set $T_\lambda=Uf_\lambda(|T|)$, $\lambda\in[0,1]$.
Then the path $\{f_\lambda(|T|):\lambda\in[0,1]\}$ connects $T$ to $U$.
It suffices to prove that $T_\lambda\in\SC$ for each $\lambda\in[0,1]$.

Since $J|T|=|T|J$, it follows from Proposition \ref{P:funcCalcu} that $f_\lambda(|T|)J=Jf_\lambda(|T|)$. Noting that $U^*=U^{-1}=JC$, we obtain
\[CT_\lambda C=CUf_\lambda(|T|)C=Jf_\lambda(|T|)C=f_\lambda(|T|)JC=f_\lambda(|T|)U^*=T_\lambda^*.
\]That is, $T_\lambda\in\SC$.
This proves the claim.

Now it remains to prove that there is an arc $\{ U_\lambda: \lambda\in[0,1]\}$ in $\SC$ connecting the identity operator to $U$.
Since $U$ is unitary, by \cite[Proposition 5.29]{RGDouglas}, the set of unitary operators in $W^*(U)$ is path connected. So there is an arc $\{ U_\lambda: \lambda\in[0,1]\}$ in $W^*(U)$ connecting the identity operator to $U$. On the other hand, since $U\in\SC$, by Proposition \ref{P:funcCalcu}, $W^*(U)\subset \SC$. This shows that $U_\lambda\in\SC$. Thus we complete the proof.
\end{proof}

\begin{proposition}\label{T:UnitCntd}
The set of unitary operators in $\SC$ is path connected.
\end{proposition}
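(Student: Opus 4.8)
The plan is to show that every unitary operator in $\SC$ can be joined to the identity $I$ by a path of unitaries lying in $\SC$; path connectedness then follows by concatenating two such paths through $I$. This mirrors the last paragraph of the proof of Theorem \ref{T:invtbCntd}.

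First I would take an arbitrary unitary $U\in\SC$. Since $U$ is unitary, it is in particular normal, so Proposition \ref{P:funcCalcu} applies and yields $W^*(U)\subset\SC$. This is the crucial point: the entire von Neumann algebra generated by $U$ already sits inside $\SC$, so any unitary we build out of $U$ by operating within $W^*(U)$ will automatically be $C$-symmetric.

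Next, since $W^*(U)$ is a von Neumann algebra containing the unitary $U$, the set of unitary operators in $W^*(U)$ is path connected by \cite[Proposition 5.29]{Douglas}. Hence there is a continuous arc $\{U_\lambda:\lambda\in[0,1]\}$ of unitaries in $W^*(U)$ with $U_0=I$ and $U_1=U$. Because $W^*(U)\subset\SC$, each $U_\lambda$ is a unitary operator lying in $\SC$, so this arc connects $I$ to $U$ inside the unitary group of $\SC$.

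Finally, given two arbitrary unitaries $U,V\in\SC$, the construction above produces a path from $U$ to $I$ and a path from $I$ to $V$ within the unitaries of $\SC$; their concatenation joins $U$ to $V$. I expect no essential obstacle here, as the argument reuses tools already in place. The only point requiring a word of care is that the unitaries of $W^*(U)$ are genuine unitaries of $\BH$, and hence of $\SC$, which is immediate from the inclusion $W^*(U)\subset\SC$ together with the fact that unitarity is an algebraic condition preserved in passing from a von Neumann subalgebra to $\BH$.
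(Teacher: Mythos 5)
Your argument is correct, but it is not the route the paper takes for this proposition. You recycle the closing step of the proof of Theorem \ref{T:invtbCntd}: since a unitary $U\in\SC$ is normal, Proposition \ref{P:funcCalcu} gives $W^*(U)\subset\SC$, and \cite[Proposition 5.29]{Douglas} supplies a norm-continuous arc of unitaries in $W^*(U)$ from $I$ to $U$, which therefore stays inside $\SC$; concatenation through $I$ finishes the proof. The paper instead exploits the structure of $C$-symmetric unitaries: it sets $J=CU$ and checks that $J$ is a conjugation (anti-linearity and isometry are clear, and $J^2=CUCU=U^*CCU=U^*U=I$ using $CUC=U^*$), so that $U=CJ$; by Corollary \ref{C:unitary-cntd} there is a path $\{J(t): t\in[0,1]\}$ of conjugations from $J$ to $C$, and then $\{CJ(t)\}$ is a path of unitaries joining $U$ to $I$ which lies in $\SC$ because $C(CJ(t))C=J(t)C=[CJ(t)]^*$. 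Both proofs are sound and of comparable depth, and in fact both ultimately rest on connectedness of a unitary group. Yours has the merit of producing the connecting path inside the commutative von Neumann algebra $W^*(U)$ — so every operator on the path commutes with $U$ — and of reusing, essentially verbatim, machinery the paper has already deployed; the paper's proof instead exhibits the Godi\v{c}--Lucenko-type factorization $U=CJ$ of $C$-symmetric unitaries and links the topology of the unitary group of $\SC$ to the path-connectedness of the set of conjugations (Corollary \ref{C:unitary-cntd}), which is the structural point the authors evidently wished to highlight.
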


\begin{proof}
Choose a unitary operator $U\in\SC$. Set $J=CU$. Then one can check that $J$ is a conjugation on $\cH$ and $U=CJ$.
By Corollary \ref{C:unitary-cntd}, there exists a path $\{J(t): t\in[0,1]\}$ of conjugations connecting $J$ to $C$.
Then $\{CJ(t): t\in[0,1]\}$ is a path of unitary operators connecting $CJ(=U)$ to $C^2(=I)$. For each $t\in[0,1]$,
note that $CJ(t)$ is a unitary operator lying in $\SC$, since $ C(CJ(t))C=J(t)C=[CJ(t)]^*$. We conclude that $U$ is connected to the identity operator via a path of unitary operators in $\SC$.
\end{proof}

\begin{proposition}\label{T:FredholmCntd}
The set of Fredholm operators in $\SC$ is path connected.
\end{proposition}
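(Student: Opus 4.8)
The plan is to reduce the statement to the path-connectedness of the invertible operators in $\SC$, already established in Theorem \ref{T:invtbCntd}. Concretely, I would show that every Fredholm operator in $\SC$ can be joined, by a path lying entirely within the Fredholm operators of $\SC$, to some invertible operator in $\SC$. Since the invertibles are themselves Fredholm and are path-connected, this forces the whole set of Fredholm operators in $\SC$ to be path-connected.

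The first step is the observation that every Fredholm $T\in\SC$ has index $0$. Indeed, from $CTC=T^*$ together with the fact that $C$ is an involutive conjugate-linear isometric bijection, one checks that $C(\ker T)=\ker T^*$; since $C$ preserves dimension this gives $\dim\ker T=\dim\ker T^*$ and hence $\ind(T)=0$. This same relation $\ker T^*=C(\ker T)$ is what will allow the perturbation below to be built inside $\SC$.

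Next, given a Fredholm $T\in\SC$, I would construct a finite-rank $F\in\SC$ with $T+F$ invertible. Fix an orthonormal basis $\{u_1,\dots,u_k\}$ of $\ker T$; since $\la Cu_i,Cu_j\ra=\la u_j,u_i\ra=\delta_{ij}$, the vectors $\{Cu_1,\dots,Cu_k\}$ form an orthonormal basis of $\ker T^*=C(\ker T)$. Put $F=\sum_{i=1}^{k}(Cu_i)\otimes u_i$. Using the elementary identities $(x\otimes y)^*=y\otimes x$ and $C(x\otimes y)C=(Cx)\otimes(Cy)$, one sees that each summand, and hence $F$, belongs to $\SC$. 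To see that $T+F$ is injective, write an element of its kernel as $x_0+x_1$ with $x_0\in\ker T$ and $x_1\perp\ker T$; then $(T+F)(x_0+x_1)=Tx_1+Fx_0$ with $Tx_1\in\overline{\ran T}=(\ker T^*)^\perp$ and $Fx_0\in\ker T^*$, so orthogonality forces $Tx_1=0$ and $Fx_0=0$, whence $x_1=0$ and $x_0=0$. As $T+F$ is Fredholm of index $0$ and injective, it is invertible.

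Finally, the straight-line path $T_t=T+tF$, $t\in[0,1]$, lies in $\SC$ (a linear subspace containing both $T$ and $F$) and consists of Fredholm operators, since each $T_t$ is a compact perturbation of the Fredholm operator $T$; it joins $T$ to the invertible operator $T+F$. Combining this with Theorem \ref{T:invtbCntd} completes the argument. I expect the main obstacle to be the explicit construction of the perturbation $F$: it must simultaneously respect the $C$-symmetry (so as to stay in $\SC$) and annihilate the obstruction to invertibility coming from $\ker T$ and $\ker T^*$. The identity $\ker T^*=C(\ker T)$ is precisely what reconciles these two requirements, and once $F$ is in hand the remaining verifications are routine.
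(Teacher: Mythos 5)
Your proof is correct, and it takes a genuinely different route from the paper. The paper also reduces to Theorem \ref{T:invtbCntd}, but it gets from a non-invertible Fredholm $T$ to an invertible operator by invoking the refined polar decomposition of complex symmetric operators (\cite[Theorem 2]{Gar07}): writing $T=CJ|T|$ with $J$ a partial conjugation commuting with $|T|$, decomposing $|T|=0\oplus A$ and $J=0\oplus J_0$ along $\ker|T|\oplus(\ker|T|)^{\bot}$, extending $J$ to a conjugation $\widetilde J=J_1\oplus J_0$, and taking the path $T_\lambda=C\widetilde J(\lambda I_1\oplus A)$, which fills in the kernel block and is invertible for $\lambda>0$. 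You instead avoid the structural theorem entirely: the identity $C(\ker T)=\ker T^*$ gives index zero, and the explicit finite-rank operator $F=\sum_i (Cu_i)\otimes u_i$ is $C$-symmetric (by $C(x\otimes y)C=(Cx)\otimes(Cy)$ and $(x\otimes y)^*=y\otimes x$, exactly as in Lemma \ref{L:computation}) and kills the kernel obstruction, so the straight-line path $T+tF$ works. Your argument is more elementary and self-contained, using only the definition of a conjugation and rank-one identities. The paper's construction has the minor advantage of making the subsequent corollary transparent: its path satisfies $\|T_\lambda-T\|=\lambda$ with $T_\lambda$ invertible for $\lambda>0$, yielding arbitrarily small compact perturbations of $T$ that are invertible and in $\SC$. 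In fact your path has the same two features --- the orthogonality argument shows $T+tF$ is injective, hence invertible, for every $t\in(0,1]$, and $\|tF\|\leq t\sqrt{k}$ (indeed $\|tF\|=t$, as $F$ is a partial isometry) --- so your construction would support that corollary as well, though you did not remark on it.
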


\begin{proof}
Assume that $T\in\SC$ is Fredholm. It suffices to find a path of Fredholm operators in $\SC$ connecting $T$ to $I$.
In view of Theorem \ref{T:invtbCntd}, we may directly assume that $T$ is not invertible.
Thus $0$ is an isolated point of $\sigma(|T|)$ and $0<\dim\ker T<\infty$.

By \cite[Theorem 2]{Gar07}, we assume that $T=CJ|T|$, where $J$ is a partial conjugation $J$ acting on $\cH$ and supported on $\overline{\ran |T|}$ such that $J|T|=|T|J$.
Then we may assume $$|T|=\begin{bmatrix}
0&0\\
0& A
\end{bmatrix}\begin{matrix}
\ker |T|\\ (\ker |T|)^\bot
\end{matrix},\ \ \  J=\begin{bmatrix}
0&0\\
0& J_0
\end{bmatrix}\begin{matrix}
\ker |T|\\ (\ker |T|)^\bot
\end{matrix},$$
where $J_0$ is a conjugation on $(\ker |T|)^\bot$.
Clearly, $A$ is invertible.

Choose a conjugation $J_1$ on $\ker |T|$.
Set
$$ \widetilde{J}=\begin{bmatrix}
J_1&0\\
0& J_0
\end{bmatrix}\begin{matrix}
\ker |T|\\ (\ker |T|)^\bot
\end{matrix}, \ \ \ P_\lambda=\begin{bmatrix}
\lambda I_1&0\\
0& A
\end{bmatrix}\begin{matrix}
\ker |T|\\ (\ker |T|)^\bot
\end{matrix}, \ \ \lambda\in [0,1],$$ where $I_1$ is the identity operator on $\ker |T|$.

For $\lambda\in [0,1]$, define $T_\lambda=C\widetilde{J}P_\lambda$. Clearly, $T_0=T$, $T_\lambda$ is invertible for $\lambda\in (0,1]$ and
$$ CT_\lambda C= \widetilde{J}P(\lambda)C=P(\lambda)\widetilde{J}C=T_\lambda^*. $$
Thus $\{T_\lambda: \lambda\in[0,1]\}$ is a path of invertible operators in $\SC$ connecting $T$ to $T_1$ (which is invertible). In view of Theorem \ref{T:invtbCntd}, one can see the conclusion.
\end{proof}

In the  proof  of Proposition \ref{T:FredholmCntd}, one can see that $T_\lambda-T\in\KH$, which implies that
$T_\lambda^{-1}T-I\in\KH$ and $TT_\lambda^{-1}-I\in\KH$ for $\lambda\in(0,1]$. Thus the following corollary is clear.

\begin{corollary}
Let $T\in\SC$. Then the following are equivalent:
\begin{enumerate}
\item[(i)] $T$ is a Fredholm operator;
\item[(ii)] given $\eps>0$, there exists $K\in\KH$ with $\|K\|<\eps$ such that $T+K$ is invertible and $T+K\in\SC$;
\item[(iii)] there exists invertible $A\in\SC$ such that $TA-I,AT-I\in\KH$.
\end{enumerate}
\end{corollary}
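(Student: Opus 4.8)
The plan is to establish the cycle of implications (i)$\Longrightarrow$(iii)$\Longrightarrow$(ii)$\Longrightarrow$(i), since the heart of the matter—namely (i)$\Longrightarrow$(ii)—is already contained in the construction used for Proposition \ref{T:FredholmCntd}, as the remark preceding this corollary indicates. First I would record the observation drawn from that proof: for a Fredholm $T\in\SC$, the path $\{T_\lambda\}$ built there satisfies $T_\lambda-T\in\KH$, and for $\lambda\in(0,1]$ each $T_\lambda$ is an invertible element of $\SC$. Choosing $\lambda$ small forces $\|T_\lambda-T\|$ small (since $P_\lambda\to P_0$ in norm and $C\widetilde{J}$ is an isometry up to the usual factors), which yields both the smallness of the perturbation and its membership in $\KH$. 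This gives (i)$\Longrightarrow$(ii) directly, and I would state (iii) as the intermediate invertible-approximant formulation.

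For (iii)$\Longrightarrow$(ii) I would argue as follows. Given invertible $A\in\SC$ with $TA-I,\,AT-I\in\KH$, the operator $T$ is Fredholm (it has an inverse modulo compacts), so I may invoke the already-proven implication (i)$\Longrightarrow$(ii) to produce, for any $\eps>0$, a compact $K\in\KH$ with $\|K\|<\eps$ and $T+K$ invertible in $\SC$. In other words, once (iii) guarantees Fredholmness, (ii) is immediate. The remaining link (ii)$\Longrightarrow$(i) is the easy direction: if $T+K$ is invertible with $K$ compact, then $T=(T+K)-K$ is a compact perturbation of an invertible (hence Fredholm) operator, so $T$ itself is Fredholm by stability of the Fredholm property under compact perturbations.

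To close the loop I still need (i)$\Longrightarrow$(iii), which I would extract from the very same path construction: the operator $T_1$ (taking $\lambda=1$) is invertible and lies in $\SC$, and by the remark $T_1^{-1}T-I\in\KH$ and $TT_1^{-1}-I\in\KH$. Setting $A=T_1^{-1}$—which is again in $\SC$ because $\SC$ is closed under inverses of its invertible elements, a fact following from the functional-calculus Proposition \ref{P:funcCalcu} applied to the normal operator $|T_1|$ together with the polar structure—gives exactly the two compactness relations required in (iii).

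The only genuine subtlety, and the step I expect to demand the most care, is the claim that the inverse of an invertible element of $\SC$ again lies in $\SC$; this is where I must lean on the $C$-symmetry explicitly. I would verify it by the short computation $C\,T^{-1}C=(CTC)^{-1}=(T^*)^{-1}=(T^{-1})^*$, valid whenever $T\in\SC$ is invertible, so that $T^{-1}\in\SC$. Everything else is routine bookkeeping with the already-established material, so the proof reduces to assembling these four implications and pointing to Proposition \ref{T:FredholmCntd} and its proof for the quantitative perturbation estimates.
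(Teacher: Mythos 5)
Your proposal is correct and takes essentially the same route as the paper: the paper's proof is exactly the observation that the path $T_\lambda$ constructed in Proposition \ref{T:FredholmCntd} satisfies $T_\lambda-T\in\KH$ with $\|T_\lambda-T\|$ small, giving (i)$\Rightarrow$(ii) and (i)$\Rightarrow$(iii) with $A=T_\lambda^{-1}$, while the converse directions are standard Fredholm theory (Atkinson plus stability under compact perturbation). One small caveat: you should drop the aside justifying $T_1^{-1}\in\SC$ via Proposition \ref{P:funcCalcu} applied to $|T_1|$ (this is not available, since $|T_1|\in\SC$ would force $T_1$ to be normal, which it need not be); your direct computation $C T^{-1} C=(CTC)^{-1}=(T^*)^{-1}=(T^{-1})^*$ is the correct and sufficient argument, and is indeed the fact the paper uses implicitly.
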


\begin{remark}
Note that $I\in\SC$ for any conjugation $C$ on $\cH$.
Then, in view of Theorem \ref{T:invtbCntd}, Proposition \ref{T:UnitCntd} and Proposition \ref{T:FredholmCntd}, we conclude that
the set of invertible operators in $\SH$, the set of unitary operators in $\SH$ and the set of Fredholm operators in $\SH$ are all path connected.
\end{remark}

%%%%%%%%%%%
\subsection{Invertible approximation}
This subsection focuses on invertible approximation in $\SC$,
that is, describing which operators can be approximated in norm by invertible operators in $\SC$.

Denote by $\GH$ the collection of invertible operators in $\BH$.
By \cite[Proposition 10.1]{AFHV}, an operator $T$ lies in the norm closure of $\GH$ if and only if  either $T$ is not a semi-Fredholm operator or $T$ is a semi-Fredholm operator with $\ind~T=0$. Recall that an operator $R$ is called a semi-Fredholm operator if $\ran R$ is closed and either $\dim\ker R$ or  $\dim\ker R^*$ is finite; in this case, $\ind R=\dim\ker R-\dim\ker R^*$ is called the index of $R$.
If, in addition, $-\infty<\ind R<\infty$, then $T$ is called a Fredholm operator.

Let $T\in\SC$. Then $CTC=T^*$ and $\dim\ker T=\dim\ker T^*$.
This shows that if $T$ is a semi-Fredholm operator, then $\ind T=0$.
In view of the invertible approximation in $\BH$, it is natural to conjecture that every operator in $\SC$ is a norm limit of invertible operators in $\SC$.
This is indeed the case.

%Concerning invertible approximation in $\SC$, we have the following result which is an analogue of the preceding result.
%

\begin{proposition}\label{P:invertibleAppr}
If $C$ is a conjugation on $\cH$, then $\SC\cap \GH$ is norm dense in $\SC$.
\end{proposition}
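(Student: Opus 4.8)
The plan is to show that every $T\in\SC$ can be approximated in norm by invertible elements of $\SC$. The key structural fact about $\SC$ that drives the argument is that for any $T\in\SC$ we have $CTC=T^*$, so $\dim\ker T=\dim\ker T^*$; consequently any semi-Fredholm operator in $\SC$ automatically has index $0$, and by the invertible approximation theorem in $\BH$ (\cite[Proposition 10.1]{AFHV}) every $T\in\SC$ lies in the norm closure of $\GH$. The difficulty is not whether $T$ can be approximated by invertibles in $\BH$, but whether the approximants can be chosen to stay inside $\SC$. So the real content is to convert an ambient invertible approximation into a $C$-symmetric one.

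First I would reduce to a convenient normal form. Given $T\in\SC$ and $\eps>0$, I would like to perturb $T$ slightly within $\SC$ so that $0$ becomes an isolated point of the spectrum of $|T|$ (equivalently, so that the approximant becomes Fredholm or invertible). The natural tool is the polar-type decomposition for complex symmetric operators from \cite[Theorem 2]{Gar07}: write $T=CJ|T|$ where $J$ is a (partial) conjugation supported on $\overline{\ran|T|}$ commuting with $|T|$, exactly as in the proof of Proposition \ref{T:FredholmCntd}. The functional calculus available in $\SC$ via Proposition \ref{P:funcCalcu} guarantees that modifying $|T|$ through a continuous function of itself keeps everything compatible with $J$ and hence keeps the result in $\SC$.

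The heart of the matter is the behavior near $0$ in the spectrum of $|T|$, where $T$ fails to be invertible or even Fredholm. If I replace $|T|$ by $|T|+\delta$ for small $\delta>0$, I obtain an invertible positive operator commuting with $J$, and $C J(|T|+\delta)$ is then an invertible operator in $\SC$ within $\delta$ of $T$; this handles the case where $\overline{\ran|T|}=\cH$, i.e. where $J$ is a genuine conjugation. The subtle case is when $\ker|T|$ is nontrivial (possibly infinite dimensional), so that $J$ is only a partial conjugation. Here I would imitate the construction in Proposition \ref{T:FredholmCntd}: choose a conjugation $J_1$ on $\ker|T|$, extend $J$ to a full conjugation $\widetilde J=J_1\oplus J_0$ on $\cH$, and set $T_\delta=C\widetilde J(\,\delta P_0+|T|\,)$, where $P_0$ is the projection onto $\ker|T|$. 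One checks $C T_\delta C=T_\delta^*$ as in that proof, so $T_\delta\in\SC$, while $\delta P_0+|T|$ is invertible, making $T_\delta$ invertible; and $\|T_\delta-T\|\le\delta$.

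I expect the main obstacle to be the infinite-dimensional kernel case, precisely the point where $J$ is only partial: one must produce a conjugation on $\ker|T|$ and verify that the glued operator $C\widetilde J(\delta P_0+|T|)$ is simultaneously invertible and $C$-symmetric, which requires the orthogonality $\ker|T|\perp(\ker|T|)^\bot$ and the commutation $\widetilde J(\delta P_0+|T|)=(\delta P_0+|T|)\widetilde J$ to be compatible with conjugating by $C$. Since $\dim\cH=\infty$ and $\dim\ker|T|$ may be infinite, one simply invokes that every closed subspace of a separable Hilbert space admits a conjugation, so $J_1$ exists; the remaining verifications are the same direct computations already carried out in the proof of Proposition \ref{T:FredholmCntd}. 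Assembling these cases yields that $\SC\cap\GH$ is norm dense in $\SC$.
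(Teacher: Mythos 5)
There is a genuine gap in your second case. You claim that $\delta P_0+|T|$ is invertible, where $P_0$ is the orthogonal projection onto $\ker|T|$. Writing $|T|=0\oplus A$ with respect to $\cH=\ker|T|\oplus(\ker|T|)^\bot$, you have $\delta P_0+|T|=\delta I_0\oplus A$, and this is invertible only when $A$ is invertible, i.e.\ only when $\ran|T|$ is closed. For a general $T\in\SC$ this fails: take an \onb\ $\{e_n\}$ with $Ce_n=e_n$ and let $T$ be diagonal with entries $0,1,\tfrac12,\tfrac13,\dots$; then $T\in\SC$, $\ker|T|$ is nontrivial, $A$ is injective but $0\in\sigma(A)$, so $\delta P_0+|T|$ is not invertible for any $\delta>0$. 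The construction you are importing from Proposition \ref{T:FredholmCntd} relies precisely on the Fredholm hypothesis made there, which forces $0$ to be an isolated point of $\sigma(|T|)$ and hence $A$ to be invertible; for arbitrary $T\in\SC$ that is exactly what is missing, and non-invertibility of $T$ need not come from the kernel alone.

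The repair is essentially your own first-case trick applied globally: with $\widetilde J=J_1\oplus J_0$ as you defined it, $\widetilde J$ commutes with $|T|+\delta I$ (not just with $\delta P_0+|T|$), $C\widetilde J|T|=CJ|T|=T$, and $T_\delta=C\widetilde J(|T|+\delta I)$ is invertible (a unitary $C\widetilde J$ times a positive invertible operator), lies in $\SC$ by the same commutation computation, and satisfies $\|T_\delta-T\|=\|(|T|+\delta I)-|T|\|=\delta$ since $C\widetilde J$ is unitary. With this one-line change your argument closes and is in fact slightly cleaner than the paper's proof, which instead truncates $A$ by its spectral projection $P=E([0,\tfrac{\eps}{2}])$, replaces the small spectral part of $A$ and the kernel by $\tfrac{\eps}{2}I$, and keeps the invertible part $A_2$; both routes rest on the same two pillars (the decomposition $T=CJ|T|$ from \cite[Theorem 2]{Gar07} and the commuting functional calculus of Proposition \ref{P:funcCalcu}), differing only in how the degenerate part of $|T|$ is pushed away from $0$.
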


\begin{proof}
Choose an operator $T\in\SC$.
By \cite[Theorem 2]{Gar07}, there exists a partial conjugation $J$ supported on $\overline{\ran |T|}$ such that $T=CJ|T|$ and $J|T|=|T|J$. Then
\[|T|=\begin{bmatrix}
0&0\\
0&A
\end{bmatrix}\begin{matrix}
\ker |T| \\ (\ker |T|)^\bot
\end{matrix}, \ \ J=\begin{bmatrix}
0&0\\
0&J_1
\end{bmatrix}\begin{matrix}
\ker |T| \\ (\ker |T|)^\bot
\end{matrix},\] where $A$ is positive, $J_1$ is a conjugation on $\overline{\ran |T|}=(\ker |T|)^\bot$ and $J_1A=AJ_1$.

Fix an $\eps>0$. Assume that $E(\cdot)$ is the projection-valued spectral measure associated with $A$. Set $P=E([0,\frac{\eps}{2}])$.
From Proposition \ref{P:funcCalcu} one can see $J_1P=PJ_1$.
%\[A=\begin{bmatrix}
%A_1&\\
%&A_2
%\end{bmatrix}\begin{matrix}
%\ran P \\(\ker |T|)^\bot\ominus \ran P
%\end{matrix}, \ \ J_1=\begin{bmatrix}
%J_{1,1}&0\\
%0&J_{1,2}
%\end{bmatrix}\begin{matrix}
%\ran P \\(\ker |T|)^\bot\ominus \ran P
%\end{matrix}.\]
Hence
\[|T|=\begin{bmatrix}
0&0&0\\
0&A_1&0\\
0&0&A_2
\end{bmatrix}\begin{matrix}
\ker |T| \\ \ran P \\(\ker |T|)^\bot\ominus \ran P
\end{matrix}, \ \ J =\begin{bmatrix}
0&0&0\\
0&J_{1,1}&0\\
0&0&J_{1,2}
\end{bmatrix}\begin{matrix}
\ker |T| \\ \ran P \\(\ker |T|)^\bot\ominus \ran P
\end{matrix}.\]

Choose a conjugation $J_0$ on $\ker |T|$ and set
\[Q=\begin{bmatrix}
\frac{\eps I_0}{2}&0&0\\
0&\frac{\eps I_1}{2}&0\\
0&0&A_2
\end{bmatrix}\begin{matrix}
\ker |T| \\ \ran P \\(\ker |T|)^\bot\ominus \ran P
\end{matrix}, \ \ \tilde{J} =\begin{bmatrix}
J_0&0&0\\
0&J_{1,1}&0\\
0&0&J_{1,2}
\end{bmatrix}\begin{matrix}
\ker |T| \\ \ran P \\(\ker |T|)^\bot\ominus \ran P
\end{matrix},\]  where $I_0$ is the identity operator on $\ker |T|$ and $I_1$ is the identity operator on $\ran P$.
Then $Q$ is positive, invertible,  and $\tilde{J}$ is a conjugation on $\cH$ commuting with $Q$. Set $T_\eps=C\tilde{J}Q$. Then $T_\eps$ is invertible, $T_\eps\in\SC$ and
\begin{align*}
\|T_\eps-T\|&\leq \|T_\eps-C\tilde{J}|T|\|+\|C\tilde{J}|T|-CJ|T|\|\\
&= \|T_\eps-C\tilde{J}|T|\|\\
&=\|Q-|T|\|\leq \frac{\eps}{2}.
\end{align*}
Since $\eps$ can be chosen arbitrarily small, we conclude that $T$ is the norm limit of invertible ones in $\SC$.
\end{proof}

By a classical approximation result of Apostol and Morrel (see \cite{ApMo} or \cite[Theorem 6.15]{Herr89}), if an operator $T$ is biquasitriangular (that is, $\ind(T-z)=0$ whenever defined), then $T$ can be approximated in norm by operators with finite spectra.
Note that each operator in $\SC$ is biquasitriangular. Thus it is natural to ask the following question.

\begin{question}
Are those elements with finite spectra norm dense in $\SC$?
\end{question}

\noindent{\bf Acknowledgements}
The second author was supported by National Natural Science Foundation of China (Grant
No. 12171195).\vskip0.1cm

%
%\noindent{\bf Disclosure of potential conflict of interests}  The authors declare that
%there are no potential conflicts of interest.
%%%%%%%%%%%%%%%%%%%% Ref %%%%%%%%%%%%%%%%
%\bibliographystyle{amsplain}
%\bibliography{reference}

\end{document}